
\documentclass{gAPA2e}
\usepackage{float}
\usepackage{graphicx,psfrag,epsf}
\usepackage{epstopdf}
\usepackage{graphicx}
\usepackage{subfigure}
\usepackage{algorithm}
\usepackage{multirow}

\newtheorem{definition}{Definition}
\newtheorem{example}{Example}
\newtheorem{remark}{Remark}
\newtheorem{proposition}{Proposition}
\newtheorem{theorem}{Theorem}
\newtheorem{lemma}{Lemma}

\begin{document}


\title{\itshape On the second order asymptotical regularization of linear ill-posed inverse problems}

\author{Y. Zhang$^{\rm a,b}$$^{\ast}$\thanks{$^\ast$Corresponding author. Email: ye.zhang@mathematik.tu-chemnitz.de.
\vspace{6pt}}, B. Hofmann$^a$ \\\vspace{6pt}  $^{a}${\em{Faculty of Mathematics, Chemnitz University of Technology, Chemnitz 09107, Germany}}; $^{b}${\em{School of Science and Technology, \"{O}rebro University,  \"{O}rebro 70182, Sweden}} }

\maketitle

\begin{abstract}
In this paper, we establish an initial theory regarding the Second Order Asymptotical Regularization (SOAR) method for the stable approximate solution of ill-posed linear operator equations in Hilbert spaces, which are models for linear inverse problems with applications in the natural sciences, imaging and engineering. We show the regularizing properties of the new method, as well as the corresponding convergence rates. We prove that, under the appropriate source conditions and by using Morozov's conventional discrepancy principle, SOAR exhibits the same power-type convergence rate as the classical version of asymptotical regularization (Showalter's method). Moreover, we propose a new total energy discrepancy principle for choosing the terminating time of the dynamical solution from SOAR, which corresponds to the unique root of a monotonically non-increasing function and allows us to also show an order optimal convergence rate for SOAR. A damped symplectic iterative regularizing algorithm is developed for the realization of SOAR. Several numerical examples are given to show the accuracy and the acceleration affect of the proposed method. A comparison with other state-of-the-art methods are provided as well.

\begin{keywords}
Linear ill-posed problems, asymptotical regularization, second order method, convergence rate, source condition, index function, qualification, discrepancy principle
\end{keywords}

\begin{classcode}
47A52;65J20;65F22;65R30
\end{classcode}

\end{abstract}

\section{Introduction}
\label{Introduction}

We are interested in solving linear operator equations,
\begin{eqnarray}\label{OperatorEq}
A x = y,
\end{eqnarray}
where $A$ is an injective and compact linear operator acting between two infinite dimensional Hilbert spaces $\mathcal{X}$ and $\mathcal{Y}$. For simplicity, we denote by $\langle \cdot, \cdot \rangle$ and $\|\cdot\|$ the inner products and norms, respectively, for both Hilbert spaces. Since $A$ is injective, the operator equation (\ref{OperatorEq}) has a unique solution $x^\dagger \in \mathcal{X}$ for every $y$ from range $\mathcal{R}(A)$ of the linear operator $A$. In this context, $\mathcal{R}(A)$ is assumed to be an infinite dimensional subspace of $\mathcal{Y}$.

Suppose that, instead of the exact right-hand side $y=A x^\dagger$, we are given noisy data $y^\delta\in \mathcal{Y}$ obeying the deterministic noise model $\|y^\delta - y\|\leq \delta$ with noise level $\delta>0$. Since $A$ is compact and ${\rm dim}(\mathcal{R}(A))=\infty$, we have $\mathcal{R}(A)\neq \overline{\mathcal{R}(A)}$ and the problem (\ref{OperatorEq}) is ill-posed. Therefore, regularization methods should be employed for obtaining stable approximate solutions.

Loosely speaking, two groups of regularization methods exist: variational regularization methods and iterative regularization methods. Tikhonov regularization is certainly the most prominent variational regularization method (cf., e.g.~\cite{Tikhonov-1998}), while the Landweber iteration is the most famous iterative regularization approach (cf., e.g.~\cite{engl1996regularization,Kaltenbacher-2008}). In this paper, our focus is on the latter, since from a computational viewpoint the iterative approach seems more attractive, especially for large-scale problems.

For the linear problem (\ref{OperatorEq}), the Landweber iteration is defined by
\begin{eqnarray}\label{Linear}
x_{k+1} = x_{k} + \Delta t A^* ( y^\delta- A x_{k} ), \quad \Delta t\in(0, 2/\|A\|^2),
\end{eqnarray}
where $A^*$ denotes the adjoint operator of $A$. We refer to~\cite[\S~6.1]{engl1996regularization} for the regularization property of the Landweber iteration. The continuous analogue of (\ref{Linear}) can be considered as a first order evolution equation in Hilbert spaces
\begin{eqnarray}\label{FisrtFlow}
\dot{x}(t) + A^* A x(t)= A^* y^\delta
\end{eqnarray}
if an artificial scalar time $t$ is introduced, and $\Delta t\to0$ in (\ref{Linear}). Here and later on, we use Newton's conventions for the time derivatives. The formulation (\ref{FisrtFlow}) is known as Showalter's method, or asymptotic regularization~\cite{Tautenhahn-1994,Vainikko1986}. The regularization property of (\ref{FisrtFlow}) can be analyzed through a proper choice of the terminating time. Moreover, it has been shown that by using Runge-Kutta integrators, all of the properties of asymptotic regularization (\ref{FisrtFlow}) carry over to its numerical realization~\cite{Rieder-2005}.

From a computational view point, the Landweber iteration, as well as the steepest descent method and the minimal error method, is quite slow. Therefore, in practice accelerating strategies are usually used; see~\cite{Kaltenbacher-2008,Neubauer-2000} and references therein for details.

Over the last few decades, besides the first order iterative methods, there has been increasing evidence to show that the discrete second order iterative methods also enjoy remarkable acceleration properties for ill-posed inverse problems. The well-known methods are the Levenberg-Marquardt method~\cite{JinQ-2010}, the iteratively regularized Gauss-Newton method~\cite{JinQ-2009}, the $\nu$-method~\cite[\S~6.3]{engl1996regularization}, and the Nesterov acceleration scheme~\cite{Neubauer-2017}. Recently, a more general second order iterative method~-- the two-point gradient method~-- has been developed, in~\cite{Hubmer-2017}. In order to understand better the intrinsic properties of the discrete second order iterative regularization, we consider in this paper the continuous version
\begin{eqnarray}\label{SecondFlow}
\left\{\begin{array}{ll}
\ddot{x}(t) + \eta \dot{x}(t) + A^* A x(t)= A^* y^\delta,  \\
x(0)=x_0, \quad \dot{x}(0)= \dot{x}_0.
\end{array}\right.
\end{eqnarray}
of the second order iterative method in the form of an evolution equation,
where $x_0 \in \mathcal{X}$ and $\dot{x}_0 \in \mathcal{X}$ are the prescribed initial data and $\eta>0$ is a constant damping parameter.

From a physical viewpoint, the system (\ref{SecondFlow}) describes the motion of a heavy ball that rolls over the graph of the residual norm square functional $\Phi(x)=\|y^\delta- A x\|^2$ and that keeps rolling under its own inertia until friction stops it at a critical point of $\Phi(x)$. This nonlinear oscillator with damping, which is called the \emph{Heavy Ball with Friction} (HBF) system, has been considered by several authors from an optimization viewpoint, establishing different convergence results and identifying the circumstances under which the rate of convergence of HBF is better than the one of the first order methods; see~\cite{Alvarez-2000,Alvarez-2002,Attouch-2000}. Numerical algorithms based on (\ref{SecondFlow}) for solving some special problems, e.g. inverse source problems in partial differential equations, large systems of linear equations, and the nonlinear Schr\"{o}dinger problem, etc., can be found in~\cite{ZhangYe2018,Edvardsson-2015,Edvardsson-2012,Sandin-2016}. The main goal of this paper is the intrinsic structure analysis of the theory of the second order iterative regularization and the development of new iterative regularization methods based on the framework (\ref{SecondFlow}).

The remainder of the paper is structured as follows: in Section 2, we extend the theory of general affine regularization schemes for solving linear ill-posed problems to a more general setting, adapted for
the analysis of the second order model (\ref{SecondFlow}). Then, the existence and uniqueness of the second order flow (\ref{SecondFlow}), as well as some of its properties, are discussed in Section 3. Section 4 is devoted to the study of the regularization property of the dynamical solution to (\ref{SecondFlow}), while Section 5 presents the results about convergence rates under the assumption of conventional source conditions. In Section 6, based on the St\"{o}rmer-Verlet method, we develop a novel iterative regularization method for the numerical implementation of the second order asymptotical regularization. Some numerical examples, as well as a comparison with four existing iterative regularization methods, are presented in Section 7. Finally, concluding remarks are given in Section 8.


\section{General affine regularization methods}
In this section, we consider general affine regularization schemes based on a family of pairs of piecewise continuous functions $\{g_\alpha (\lambda), \phi_\alpha (\lambda) \}_{\alpha}$ ($0<\lambda\leq \|A\|^2$) for regularization parameters $\alpha\in(0,\bar{\alpha}]$. Once a pair of generating functions $\{g_\alpha (\lambda), \phi_\alpha (\lambda) \}$ is chosen, the approximate solution to (\ref{OperatorEq}) can be given by the procedure
\begin{eqnarray}\label{regularization}
x^\delta_\alpha = (1- A^* A g_\alpha(A^* A)) x_0 + \phi_\alpha(A^* A)\dot{x}_0 + g_\alpha(A^* A)A^* y^\delta.
\end{eqnarray}

\begin{remark}
The affine regularization procedure defined by formula (\ref{regularization}) is designed in particular for the second order evolution equation  (\ref{SecondFlow}). If one sets $(x_0, \dot{x}_0)=(0,0)$, the proposed regularization method coincides with the classical linear regularization schema for general linear ill-posed problems; see, e.g.,~\cite{Hofmann-2007}. However, as the numerical experiments in Section 7 will show, the initial data influence the behaviour of the regularized solutions obtained by (\ref{SecondFlow}). By finding an appropriate choice of the triple $(x_0,\dot{x}_0,\eta)$, the second order analog of the asymptotical regularization yields an accelerated procedure with approximate solutions of higher accuracy.
\end{remark}

To evaluate the regularization error $e(x^\dagger, \alpha, \delta) : = \|x^\delta_\alpha - x^\dagger\|$ for the procedure (\ref{regularization}) in combination with the the noise-free intermediate quantity
\begin{eqnarray}\label{ApproximateNoisyFreeSolution}
x_\alpha:= (1- A^* A g_\alpha(A^* A)) x_0 + \phi_\alpha(A^* A)\dot{x}_0 + g_\alpha(A^* A)A^* A x^\dagger,
\end{eqnarray}
where evidently $e(x^\dagger, \alpha, \delta)\le \|x_\alpha-x^\dagger\|+\|x_\alpha^\delta-x_\alpha\|$,
we introduce the concepts of index functions and profile functions from \cite{Mathe-2003} and \cite{Hofmann-2007}, as follows:

\begin{definition}
\label{Index}
A real function $\varphi: (0,\infty) \to (0,\infty)$ is called an index function if it is continuous, strictly increasing and satisfies the condition $\lim_{\lambda\to 0+} \varphi(\lambda)=0$.
\end{definition}

\begin{definition}
\label{Profile}
An index function $f$, for which $\|x_\alpha-x^\dagger\| \le f(\alpha)\;(\alpha\in(0,\bar{\alpha}])$ holds, is called a profile function to $x^\dagger$ under the assumptions stated above.
\end{definition}

Having a profile function $f$ estimating the noise-free error $\|x_\alpha-x^\dagger\|$ and taking into account that $\delta \|g_\alpha(A^* A)A^*\|$ is an upper bound for the noise-propagation error $\|x_\alpha^\delta-x_\alpha\|$,
which is independent of $x^\dagger$, we can estimate the total regularization error $e(x^\dagger, \alpha, \delta)$ as
\begin{eqnarray}\label{errorBounds}
e(x^\dagger, \alpha, \delta) \le  f(\alpha) + \delta \|g_\alpha(A^* A)A^*\|
\end{eqnarray}
for all $\alpha\in(0,\bar{\alpha}]$.
If we denote by
\begin{eqnarray}\label{residualFun}
r_\alpha(\lambda) = 1 - \lambda g_\alpha(\lambda), \quad \lambda\in(0, \|A\|^2]
\end{eqnarray}
the bias function related to the major part of the regularization method $g_\alpha$ from (\ref{regularization}), then $f(\alpha)= \|r_\alpha(A^* A) (x_0 - x^\dagger) + \phi_\alpha(A^* A)\dot{x}_0\|$ is evidently a profile function to $x^\dagger$ and we have
\begin{eqnarray}\label{errorBounds2}
e(x^\dagger, \alpha, \delta) \le  \|r_\alpha(A^* A) (x_0 - x^\dagger) + \phi_\alpha(A^* A)\dot{x}_0\|+\delta \|g_\alpha(A^* A)A^*\|.
\end{eqnarray}

\begin{proposition}
\label{gAlpha}
Assume that the pairs of functions $\{g_\alpha (\lambda), \phi_\alpha (\lambda) \}_{\alpha}$ are piecewise continuous in $\alpha$ and satisfy the following three conditions:
\begin{itemize}
\item[(i)] For any fixed $\lambda\in (0,\|A\|^2]$: $\lim_{\alpha\to0} r_{\alpha}(\lambda) =0$ and $\lim_{\alpha\to0} \phi_{\alpha}(\lambda) =0$.
\item[(ii)] Two constants $\gamma_1$ and $\gamma_2$ exist such that $|r_{\alpha}(\lambda)|\leq \gamma_1$ and $|\phi_{\alpha}(\lambda)|\leq \gamma_2$ hold for all $\alpha\in(0,\bar{\alpha}]$.
\item[(iii)] A constant $\gamma_*$ exists such that $\sqrt{\lambda} |g_{\alpha}(\lambda)| \leq \gamma_*/\sqrt{\alpha}$ for all $\alpha\in(0,\bar{\alpha}]$.
\end{itemize}
Then, if the regularization parameter $\alpha=\alpha(\delta,y^\delta)$ is chosen so that
$$\lim_{\delta\to 0} \alpha =  \lim_{\delta\to 0} \delta^2 / \alpha = 0,$$
the approximate solution in (\ref{regularization}) converges to the exact solution $x^\dagger$ as $\delta\to0$.
\end{proposition}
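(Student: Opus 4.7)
The plan is to use the standard splitting of the total regularization error into an approximation (noise-free) part and a data-propagation part, and to verify that each tends to zero as $\delta \to 0$ under the stated parameter choice. The starting point is inequality (\ref{errorBounds2}),
\begin{equation*}
e(x^\dagger,\alpha,\delta) \le \|r_\alpha(A^*A)(x_0-x^\dagger)\| + \|\phi_\alpha(A^*A)\dot{x}_0\| + \delta\,\|g_\alpha(A^*A)A^*\|.
\end{equation*}

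First I would handle the noise-propagation term. Via the functional calculus for the self-adjoint nonnegative operator $A^*A$ together with the $C^*$-identity, one has $\|g_\alpha(A^*A)A^*\| = \sup_{\lambda\in\sigma(A^*A)} \sqrt{\lambda}\,|g_\alpha(\lambda)|$. Assumption (iii) then gives $\delta\,\|g_\alpha(A^*A)A^*\|\le \gamma_* \delta/\sqrt{\alpha}$, and this tends to $0$ precisely because $\delta^2/\alpha\to 0$.

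Next I would estimate the two approximation terms by the spectral theorem. For any $u\in\mathcal{X}$ and any bounded Borel function $h$ one has $\|h(A^*A)u\|^2 = \int_{0}^{\|A\|^2} |h(\lambda)|^2\, d\|E_\lambda u\|^2$, where $E_\lambda$ is the spectral family of $A^*A$. Since $A$ is injective, $A^*A$ is injective, so the spectral projector $E_{\{0\}}$ vanishes and the positive measure $d\|E_\lambda u\|^2$ charges only $(0,\|A\|^2]$. Applying this with $h=r_\alpha$ and $u=x_0-x^\dagger$, condition (ii) provides the integrable majorant $\gamma_1^2$, while condition (i) ensures pointwise convergence to $0$ on $(0,\|A\|^2]$; dominated convergence therefore yields $\|r_\alpha(A^*A)(x_0-x^\dagger)\|\to 0$ as $\alpha\to 0$. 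The same reasoning applied to $\phi_\alpha$ and $\dot{x}_0$ gives $\|\phi_\alpha(A^*A)\dot{x}_0\|\to 0$.

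Combining the three bounds and using $\alpha\to 0$ together with $\delta^2/\alpha\to 0$ produces $e(x^\dagger,\alpha,\delta)\to 0$. The only point that needs care is the observation that injectivity of $A$ rules out a point mass of the spectral measure at $\lambda=0$; without this remark, pointwise convergence of $r_\alpha$ and $\phi_\alpha$ on $(0,\|A\|^2]$ would not be enough to conclude. This, together with the fact that conditions (i)--(iii) are tailor-made to fit the dominated-convergence step and the noise-amplification bound, makes the argument short and essentially a verification rather than a genuinely difficult estimate.
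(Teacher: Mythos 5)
Your proposal is correct and follows essentially the same route as the paper: both start from the error splitting (\ref{errorBounds2}), bound the noise-propagation term by $\gamma_*\delta/\sqrt{\alpha}$ via condition (iii), and obtain the vanishing of the two approximation terms from (i) and (ii). The only difference is that the paper delegates the pointwise operator convergence $r_\alpha(A^*A)x\to 0$, $\phi_\alpha(A^*A)x\to 0$ to a citation of \cite[Theorem 4.1]{engl1996regularization}, whereas you prove it directly by the spectral theorem and dominated convergence (correctly noting that injectivity of $A$ excludes a point mass of the spectral measure at $\lambda=0$).
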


\begin{proof}
From the properties (i) and (ii) of Proposition \ref{gAlpha} we deduce for $\alpha\to0$ point-wise convergence $r_\alpha(A^* A)x_1\to0$ and $\phi_\alpha(A^* A)x_2\to0$ for any $x_{1,2}\in \mathcal{X}$ (see, e.g.,~\cite[Theorem 4.1]{engl1996regularization}). Therefore, by the estimate (\ref{errorBounds2}) we can derive that
\begin{eqnarray*}
\begin{array}{ll}
e(x^\dagger, \alpha, \delta) \leq \|r_\alpha(A^* A) (x_0 - x^\dagger)\| + \|\phi_\alpha(A^* A)\dot{x}_0\| + \gamma_* \delta / \sqrt{\alpha} \to 0
\end{array}
\end{eqnarray*}
as $\delta\to0$.

\end{proof}

Proposition~\ref{gAlpha} motivates us to call the procedure (\ref{regularization}) a regularization method for the linear inverse problem (\ref{OperatorEq}) if the pair of functions $\{g_\alpha (\lambda), \phi_\alpha (\lambda) \}_{\alpha}$  satisfies the three requirements $(i)$, $(ii)$ and $(iii)$.

\begin{example}
For the Landweber iteration (\ref{Linear}) with the step size $\Delta t\in (0, 2/\|A\|^2)$, we have $\phi_\alpha (\lambda)=0$ and $g_\alpha (\lambda)= (1- (1- \Delta t \lambda)^{\lfloor 1/ \alpha \rfloor})/\lambda$. It is not difficult to show, e.g. in~\cite{Hofmann-2007,Mathe-2006}, that $g_\alpha (\lambda)$ is a regularization method by Proposition \ref{gAlpha} with constants $\gamma_2=0$, $\gamma_1=1$ and $\gamma_*=\sqrt{2\Delta t}$. Consider the continuous version of the Landweber iteration (\ref{FisrtFlow}), i.e. Showalter's method. It is not difficult to show that $\phi_\alpha (\lambda)=0$ and $g_\alpha (\lambda)= (1 - e^{-\lambda/\alpha})/\lambda$, and hence $r_\alpha (\lambda) = e^{-\lambda / \alpha}$. Obviously, $g_\alpha (\lambda)$ is a regularization method with $\gamma_2=0$, $\gamma_1=1$ and $\gamma_*=\theta$ by noting that $\sup_{\lambda\in \mathbf{R}_+} \sqrt{\lambda} g_\alpha (\lambda) = \theta \sqrt{\alpha}$ and $\theta = \sup_{\lambda\in \mathbf{R}_+} \sqrt{\lambda} (\lambda - e^{-\lambda}) \approx 0.6382$~\cite{Vainikko1986}.
\end{example}

Note that the three requirements $(i) - (iii)$ in Proposition \ref{gAlpha} are not enough to ensure rates of convergence for the regularized solutions. More precisely, for rates in the case of ill-posed problems, additional smoothness assumptions on $x^\dagger$ in correspondence with the forward operator $A$ and the regularization method under consideration have to be fulfilled. This allows us to verify the specific profile functions $f(\alpha)$ in formula (\ref{errorBounds}) that are specified for our second order method in formula (\ref{errorBounds2}).
Once a profile function $f$ is given, together with the property $(iii)$ in Proposition~\ref{gAlpha}, we obtain from the estimate (\ref{errorBounds}) that
\begin{eqnarray}\label{Estimator2}
e(x^\dagger, \alpha, \delta) \leq f(\alpha) + \gamma_* \delta/\sqrt{\alpha} \quad \textrm{~for all~} \alpha\in (0, \bar{\alpha}].
\end{eqnarray}
Moreover, if we consider the auxiliary index function
\begin{eqnarray}\label{Theta}
\Theta(\alpha) := \sqrt{\alpha} f(\alpha),
\end{eqnarray}
and choose the regularization parameter a priori as $\alpha_* = \Theta^{-1}(\delta)$, then we can easily see that
\begin{eqnarray}\label{Estimator3}
e(x^\dagger, \alpha_*, \delta) \leq (1 + \gamma_*) f(\Theta^{-1}(\delta)).
\end{eqnarray}
Hence, the convergence rate $f\left(\Theta^{-1}(\delta) \right)$ of the total regularization error as $\delta \to 0$ depends on the profile function $f$ only, but for our suggested approach, $f$ is a function of $x^\dagger,  g_\alpha, \phi_\alpha, x_0, \dot{x}_0, A$ and on the damping parameter $\eta$.

In order to verify the profile function $f$ in detail, it is of interest to consider how sensitive the regularization method is with respect to a priori smoothness assumptions. In this context, the concept of qualification can be exploited for answering this question: the higher its qualification, the more the method is capable of reacting to smoothness assumptions. Expressing the qualification by means of index functions $\psi$, the traditional concept of qualifications with monomials $\psi(\lambda)=\lambda^\kappa$ for $\kappa> 0$  from \cite{Vainikko1986} (see also \cite{engl1996regularization}) has been extended  in \cite{Hofmann-2007,Mathe-2003}  to general index functions $\psi$. We adapt this generalized concept to our class of methods (\ref{regularization}) in the following definition.

\begin{definition}
\label{QualificationDef}
Let $\psi$ be an index function. A regularization method (\ref{regularization}) for the linear operator equation (\ref{OperatorEq}) generated by the pair $\{g_\alpha (\lambda), \phi_\alpha (\lambda)\}\;(0<\lambda\leq \|A\|^2)$ is said to have the qualification $\psi$ with constant $\gamma> 0$ if both inequalities
\begin{eqnarray}\label{QualificationFun}
\sup_{\lambda\in(0,\|A\|^2]} |r_{\alpha}(\lambda)| \psi(\lambda) \leq \gamma \psi(\alpha) \quad \textrm{~and~} \quad \sup_{\lambda\in(0,\|A\|^2]} |\phi_{\alpha}(\lambda)| \psi(\lambda) \leq \gamma \psi(\alpha)
\end{eqnarray}
are satisfied for all $\,0<\alpha\leq \|A\|^2$.
\end{definition}

\begin{remark}
Since the bias function of Showalter's method equals $r_{\alpha}(\lambda)= e^{-\lambda/\alpha}$ and $\phi_\alpha (\lambda)=0$, set $\xi=\lambda$ in the following identity
\begin{eqnarray}\label{Pflimiting1}
\sup_{0\leq \xi < \infty} e^{-\xi / \alpha} \xi^p =  \left( p/e \right)^p \alpha^{p}
\end{eqnarray}
to conclude that for all exponents $p>0$ the monomials $\psi(\lambda) = \lambda^{p}$  are qualifications for Showalter's method. We will show that an analogue result also holds for the second order asymptotical regularization method - see Proposition~\ref{ThmQualificationCase1} below - and will apply this fact to obtaining associated convergence rates.
\end{remark}

\section{Properties of the second order flow}

We first prove the existence and uniqueness of strong global solutions of the second order equation (\ref{SecondFlow}). Then, we study the long-term behavior of the dynamical solution $x(t)$ of (\ref{SecondFlow}) and the residual norm functional $\| A x(t)-y^\delta\|$.

\begin{definition}
$x:[0, +\infty)\to \mathcal{X}$ is a strong global solution of (\ref{SecondFlow}) with initial data $(x_0,\dot{x}_0)$ if $x(0)= x_0 \in \mathcal{X}, \dot{x}(0)= \dot{x}_0 \in \mathcal{X}$, and
\begin{itemize}
\item $x(\cdot), \dot{x}(\cdot):[0, +\infty)\to \mathcal{X}$ are locally absolutely continuous~\cite{Bot-2016},
\item $\ddot{x}(t) + \eta \,\dot{x}(t) + A^* A x(t)= A^* y^\delta$ holds for almost every $t\in[0,+\infty)$.
\end{itemize}
\end{definition}

\begin{theorem}\label{ExsisteceUnique}
For any pair $(x_0,\dot{x}_0)\in \mathcal{X}\times \mathcal{X}$ there exists a unique strong global solution of the second order dynamical system (\ref{SecondFlow}).
\end{theorem}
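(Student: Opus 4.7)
The plan is to reformulate the second-order equation as a first-order system on the product Hilbert space $\mathcal{X}\times\mathcal{X}$ and then invoke the Cauchy--Lipschitz (Picard--Lindel\"of) theorem for Banach-space-valued ODEs. Setting $u(t):=(x(t),\dot{x}(t))$, the system (\ref{SecondFlow}) is equivalent to
\begin{equation*}
\dot{u}(t) = \mathcal{B} u(t) + F, \qquad u(0)=(x_0,\dot{x}_0),
\end{equation*}
where
\begin{equation*}
\mathcal{B} = \begin{pmatrix} 0 & I \\ -A^*A & -\eta I \end{pmatrix}, \qquad F = \begin{pmatrix} 0 \\ A^* y^\delta \end{pmatrix}.
\end{equation*}
Since $A$ is compact, hence bounded, the operator $A^*A$ is bounded on $\mathcal{X}$ with $\|A^*A\|\le\|A\|^2$, so $\mathcal{B}$ is a \emph{bounded} linear operator on the product space.

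Next, because $\mathcal{B}$ is bounded, the right-hand side $u\mapsto \mathcal{B}u+F$ is globally Lipschitz on $\mathcal{X}\times\mathcal{X}$ with Lipschitz constant $\|\mathcal{B}\|$. The standard Cauchy--Lipschitz theorem for ODEs in Hilbert (or Banach) spaces then yields, for every $T>0$, a unique solution $u\in C^1([0,T];\mathcal{X}\times\mathcal{X})$ which may be written explicitly as
\begin{equation*}
u(t) = e^{t\mathcal{B}} u(0) + \int_0^t e^{(t-s)\mathcal{B}} F\,ds,
\end{equation*}
with the operator exponential defined by the norm-convergent series $\sum_{k\ge 0}(t\mathcal{B})^k/k!$.

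To pass from local to global existence, I would apply Gr\"onwall's inequality to $\|u(t)\|\le\|u(0)\|+\int_0^t\bigl(\|\mathcal{B}\|\|u(s)\|+\|F\|\bigr)ds$, which gives the a priori bound $\|u(t)\|\le e^{t\|\mathcal{B}\|}\bigl(\|u(0)\|+t\|F\|\bigr)$. This rules out finite-time blow-up and extends the solution to all of $[0,+\infty)$. The $C^1$ regularity of $u$ immediately implies that $x$ and $\dot{x}$ are locally absolutely continuous and that the equation in (\ref{SecondFlow}) holds classically for every $t\ge 0$, a fortiori almost everywhere, matching the definition of a strong global solution. Uniqueness follows by the same Gr\"onwall argument applied to the difference of two candidate solutions, which satisfies the homogeneous linear equation $\dot{v}=\mathcal{B}v$ with $v(0)=0$.

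I do not anticipate a genuine obstacle: the whole argument hinges on the boundedness of $A^*A$, which is guaranteed by the compactness of $A$. The only mild bookkeeping step is to verify that the reduction to the first-order system is fully equivalent to the original problem in the sense of the definition of strong solution, which is routine once the $C^1$ regularity of $u$ is in hand.
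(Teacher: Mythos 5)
Your proposal is correct and follows essentially the same route as the paper: rewriting (\ref{SecondFlow}) as a first-order system on $\mathcal{X}\times\mathcal{X}$ with a bounded linear operator and invoking the Cauchy--Lipschitz--Picard theorem. The additional details you supply (the variation-of-constants formula, the Gr\"onwall bound ruling out blow-up, and the check that $C^1$ regularity yields a strong solution in the sense of the definition) simply make explicit what the paper leaves implicit.
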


\begin{proof}
Denote by $\textbf{z}=(x, \dot{x})^T$, and rewrite (\ref{SecondFlow}) as a first order differential equation
\begin{eqnarray}\label{FisrtOrderForm}
\dot{\textbf{z}}(t) = B \textbf{z}(t) + \mathbf{d},
\end{eqnarray}
where $B=-[0, I; A^* A, \eta I]$, $\mathbf{d} = [0; A^* y^\delta]$ and $I$ denotes the identity operator in $\mathcal{X}$. Since $A$ is a bounded linear operator, both $A^*$ and $B$ are also bounded linear operators. Hence, by the Cauchy-Lipschitz-Picard theorem, the first-order autonomous system (\ref{FisrtOrderForm}) has a unique global solution for the given initial data $(x_0,\dot{x}_0)$.

\end{proof}

Now, we start to investigate the long-term behaviors of the dynamical solution and the residual norm functional. These properties will be used for the study of convergence rate in Section 5.

\begin{lemma}\label{LemmaVanishing}
Let $x(t)$ be the solution of (\ref{SecondFlow}). Then, $\dot{x}(\cdot)\in L^2([0,\infty),\mathcal{X})$ and $\dot{x}(t)\to0$ as $t\to\infty$. Moreover, we have the following two limit relations
\begin{equation}
\label{ResidualErrorLimit}
\lim_{t\to \infty} \|A x(t)-y^\delta\| \leq \delta
\end{equation}
and
\begin{eqnarray}
\label{RateLimits}
\lim_{t\to \infty} \|A x(t)-y^\delta\|^2+ \|\dot{x}(t)\|^2 = \inf_{x^*\in \mathcal{X}} \|A x^*-y^\delta\|^2.
\end{eqnarray}
\end{lemma}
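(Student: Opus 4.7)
The plan is to introduce the natural Lyapunov functional
$$E(t) := \tfrac{1}{2}\|\dot{x}(t)\|^2 + \tfrac{1}{2}\|Ax(t) - y^\delta\|^2$$
and to base the whole argument on its monotonicity. A direct differentiation together with the governing equation in (\ref{SecondFlow}) gives $\dot{E}(t) = \langle \ddot{x} + A^*Ax - A^*y^\delta,\dot{x}\rangle = -\eta\|\dot{x}(t)\|^2 \le 0$. So $E$ is non-increasing, which immediately yields the uniform bounds $\|\dot{x}(t)\|\le\sqrt{2E(0)}$ and $\|Ax(t)-y^\delta\|\le\sqrt{2E(0)}$, and integrating from $0$ to $\infty$ produces $\eta\int_0^\infty\|\dot{x}(t)\|^2\,dt \le E(0) < \infty$, establishing $\dot{x}\in L^2([0,\infty),\mathcal{X})$.

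To upgrade $L^2$-integrability to $\dot{x}(t)\to 0$, I would use the equation itself to bound $\|\ddot{x}(t)\| \le \eta\|\dot{x}(t)\| + \|A\|\,\|Ax(t)-y^\delta\|$, which is uniformly bounded by the previous step. Therefore $t\mapsto\|\dot{x}(t)\|^2$ has a bounded derivative $2\langle\ddot{x},\dot{x}\rangle$ and is uniformly continuous, so Barbalat's lemma combined with $\dot{x}\in L^2$ forces $\|\dot{x}(t)\|\to 0$ as $t\to\infty$. Note also that (\ref{ResidualErrorLimit}) is a consequence of (\ref{RateLimits}), since specializing to $x^*=x^\dagger$ gives $\inf_{x^*}\|Ax^*-y^\delta\|\le\|y-y^\delta\|\le\delta$.

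The main obstacle is therefore (\ref{RateLimits}) itself, and the subtle point is that $\mathcal{R}(A)$ is not closed, so the infimum $m := \inf_{x^*\in\mathcal{X}}\|Ax^*-y^\delta\|^2$ is in general not attained. Monotonicity already forces $E(t)$ to decrease to some limit $E_\infty\ge m/2$, and since $\|\dot{x}(t)\|\to 0$ this is equivalent to showing $\ell(t) := \|Ax(t)-y^\delta\|^2 \to m$. The plan is argument by contradiction: assuming $2E_\infty>m$, pick $\tilde x\in\mathcal{X}$ with $\|A\tilde x-y^\delta\|^2 < 2E_\infty$, and study $h(t):=\tfrac12\|x(t)-\tilde x\|^2$. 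Differentiating twice and using (\ref{SecondFlow}), the identity $\langle Ax-A\tilde x, Ax-y^\delta\rangle = \|Ax-y^\delta\|^2 + \langle y^\delta - A\tilde x, Ax-y^\delta\rangle$ together with Young's inequality yields
$$\ddot{h}(t) + \eta\dot{h}(t) \le \|\dot{x}(t)\|^2 - \tfrac12\|Ax(t)-y^\delta\|^2 + \tfrac12\|A\tilde x-y^\delta\|^2 .$$
Since $\|\dot{x}(t)\|^2\to 0$ and $\ell(t)\to 2E_\infty > \|A\tilde x - y^\delta\|^2$, the right-hand side is bounded above by some negative constant $-c$ for all $t\ge t_0$. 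Multiplying by $e^{\eta t}$ and integrating gives $\dot{h}(t) \le e^{-\eta(t-t_0)}\dot{h}(t_0) - \tfrac{c}{\eta}(1-e^{-\eta(t-t_0)})$, so $\dot{h}(t)\le -c/(2\eta)$ eventually, which drives $h(t)\to -\infty$, contradicting $h\ge 0$. Hence $2E_\infty = m$, which delivers (\ref{RateLimits}) and completes the proof.
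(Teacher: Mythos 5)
Your proof is correct, and while it is built on the same two pillars as the paper's argument in Appendix A.1 --- the energy $\mathcal{E}(t)=\|Ax(t)-y^\delta\|^2+\|\dot{x}(t)\|^2$ with $\dot{\mathcal{E}}=-2\eta\|\dot{x}\|^2$, and the auxiliary function $\tfrac12\|x(t)-x^*\|^2$ whose second-order differential inequality (identical to yours, since the paper's convexity step and your Young-inequality step produce the same bound $\ddot h+\eta\dot h\le\|\dot x\|^2-\tfrac12\|Ax-y^\delta\|^2+\tfrac12\|Ax^*-y^\delta\|^2$) --- the way you exploit them is genuinely different. The paper, following Attouch--Goudou--Redont, multiplies by $e^{\eta t}$, integrates \emph{twice}, divides by the linearly growing factor $\frac{\eta t-1+e^{-\eta t}}{2\eta^2}$, and controls the double-integral remainder via $\dot x\in L^2$ to get $\limsup_t\|Ax(t)-y^\delta\|\le\|Ax^*-y^\delta\|$ directly for every $x^*$; it then deduces $\dot{x}(t)\to0$ \emph{last}, from the convergence of $\mathcal{E}$ together with the already-established limit of the residual. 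You invert the order: you first get $\dot{x}(t)\to0$ from Barbalat's lemma (using boundedness of $\ddot x$ from the equation), which lets you identify $\lim_t\|Ax(t)-y^\delta\|^2=2E_\infty$, and then you rule out $2E_\infty>m$ by a single integration of the differential inequality that drives $h(t)\to-\infty$. Your route is shorter and avoids the Ces\`aro-type averaging computation entirely; the price is the extra (standard) input of Barbalat's lemma, whereas the paper's averaging argument is self-contained and, as a by-product, yields the quantitative estimate on $h(t)$ that is reused in the proof of Lemma~\ref{LemmaVanishingExact}. Both are complete proofs of (\ref{ResidualErrorLimit}) and (\ref{RateLimits}).
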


The proof of the above lemma uses the idea given in~\cite{Attouch-2000}, and can be found in the Appendix A.1. If $y^\delta$ does not belong to the domain $\mathcal{D}(A^\dagger)$ of the Moore-Penrose inverse $A^\dagger$ of $A$, it is not difficult to show that there is a ``blow-up'' for the solution $x(t)$ of the dynamical system (\ref{SecondFlow}) in the sense that $\|x(t)\|\to\infty$ as $t\to\infty$. Contrarily, for $y^\delta \in \mathcal{D}(A^\dagger)$, i.e.~if the noisy data $y^\delta$ satisfy the Picard condition, one can show more assertions concerning the long-term behaviour of the solution to the evolution equation (\ref{SecondFlow}), and we refer to Lemma~\ref{LemmaVanishingExact} below for results in the case of noise-free data $y=Ax^\dagger$. In this work, for the inverse problem with noisy data, we are first and foremost interested in the case that $y^\delta \not\in \mathcal{D}(A^\dagger)$ may occur, since the set $\mathcal{D}(A^\dagger)$ is of the first category and the chance to meet such an element is negligible.

At the end of this section, we show some properties of $x(t)$ of (\ref{SecondFlow}) with noise-free data.

\begin{lemma}\label{LemmaVanishingExact}
Let $x(t)$ be the solution of (\ref{SecondFlow}) with the exact right-hand side $y$ as data. Then, in the case $\eta\geq\|A\|$, we have
\begin{itemize}
\item[(i)] $x(\cdot)\in L^\infty([0,\infty),\mathcal{X})$.
\item[(ii)] $\dot{x}(\cdot)\in L^\infty([0,\infty),\mathcal{X}) \cap L^2([0,\infty),\mathcal{X})$ and $\dot{x}(t)\to0$ as $t\to\infty$.
\item[(iii)] $\ddot{x}(\cdot)\in L^\infty([0,\infty),\mathcal{X}) \cap L^2([0,\infty),\mathcal{X})$ and $\ddot{x}(t)\to0$ as $t\to\infty$.
\item[(iv)] $\|A x(t) - y\| = o(t^{-1/2})$ as $t\to\infty$.
\end{itemize}
\end{lemma}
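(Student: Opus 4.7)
The plan is to reduce to a homogeneous equation by setting $w(t):=x(t)-x^\dagger$, which satisfies $\ddot w+\eta\dot w+A^*Aw=0$ since $y=Ax^\dagger$. All four assertions then become statements about this linear dissipative evolution.

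The workhorses are two Lyapunov functionals,
\begin{equation*}
E(t):=\frac12\|\dot w(t)\|^2+\frac12\|Aw(t)\|^2,\qquad F(t):=\frac12\|\dot w(t)+\eta w(t)\|^2+\frac12\|Aw(t)\|^2.
\end{equation*}
Testing the equation against $\dot w$ gives $\dot E=-\eta\|\dot w\|^2$, and testing against $\dot w+\eta w$ gives $\dot F=-\eta\|Aw\|^2$; both $E,F\ge 0$ are therefore non-increasing, and integration yields $\dot w,\,Aw\in L^2([0,\infty);\mathcal X)\cap L^\infty([0,\infty);\mathcal X)$. Assertion (i) follows from the triangle inequality $\eta\|w\|\le\|\dot w+\eta w\|+\|\dot w\|$ applied to the bounds produced by $F$ and $E$, and (ii) is immediate. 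For (iii), the ODE rewritten as $\ddot w=-\eta\dot w-A^*Aw$ together with $\|A^*Aw\|\le\|A\|\,\|Aw\|$ gives $\|\ddot w\|^2\le 2\eta^2\|\dot w\|^2+2\|A\|^2\|Aw\|^2$, proving $\ddot w\in L^\infty\cap L^2$. The decays $\dot w(t)\to 0$ and $Aw(t)\to 0$ follow from Barbalat's lemma, since $\|\dot w\|^2$ and $\|Aw\|^2$ are each in $L^1$ with a bounded derivative and are therefore uniformly continuous; then $\ddot w(t)\to 0$ follows from the ODE.

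The main obstacle is the sharp rate in (iv), because the $L^1$-integrability of $\|Aw\|^2$ alone does not deliver a pointwise decay rate. My plan is to introduce a virial-type auxiliary functional
\begin{equation*}
\mathcal L(t):=tE(t)+\langle w(t),\dot w(t)\rangle+\frac{\eta}{2}\|w(t)\|^2.
\end{equation*}
Using $\dot E=-\eta\|\dot w\|^2$ together with the identity $\frac{d}{dt}\big(\langle w,\dot w\rangle+\frac{\eta}{2}\|w\|^2\big)=\|\dot w\|^2-\|Aw\|^2$, a short computation gives
\begin{equation*}
\dot{\mathcal L}(t)=-\frac12\|Aw(t)\|^2+\Big(\frac32-t\eta\Big)\|\dot w(t)\|^2,
\end{equation*}
which is non-positive as soon as $t\ge 3/(2\eta)$. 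Combined with the $L^\infty$ bounds from (i)--(ii), $\mathcal L$ is eventually non-increasing and bounded below, so it converges; and because the correction term has an $L^1$ derivative, it too converges, whence $tE(t)\to c^*$ for some $c^*\in[0,\infty)$.

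The final step is to rule out $c^*>0$ by contradiction: if $c^*>0$, then $E(t)\ge c^*/(2t)$ for all large $t$, which forces $\int^{\infty}(\|\dot w\|^2+\|Aw\|^2)\,dt=+\infty$ and contradicts the $L^2$-integrability already proved. Hence $c^*=0$, so $tE(t)\to 0$, and in particular $\|Ax(t)-y\|^2=\|Aw(t)\|^2=o(1/t)$, which is (iv). The chief difficulty is guessing the right corrective term in $\mathcal L$: the choice $\langle w,\dot w\rangle+\frac{\eta}{2}\|w\|^2$ is dictated by the need to cancel the bad sign of $t\dot E$, and the hypothesis $\eta\ge\|A\|$ enters only in keeping the Lyapunov constants clean, the structural argument requiring only $\eta>0$.
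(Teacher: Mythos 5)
Your proof is correct, and it takes a genuinely different route from the paper's. For (i) the paper derives a second-order differential inequality for $e(t)=\tfrac12\|x(t)-x^\dagger\|^2$, where the hypothesis $\eta\ge\|A\|$ is used essentially to make the coefficient $\eta^2/\|A\|^2-1$ of $\|\dot x\|^2$ nonnegative, and then integrates with the factor $e^{\eta t}$; your shifted energy $F=\tfrac12\|\dot w+\eta w\|^2+\tfrac12\|Aw\|^2$ with $\dot F=-\eta\|Aw\|^2$ obtains boundedness of the trajectory and $Aw\in L^2$ for \emph{every} $\eta>0$, so your argument actually dispenses with the hypothesis $\eta\ge\|A\|$ throughout (your (iii) likewise only needs $\dot w,Aw\in L^2\cap L^\infty$). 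This is a genuine strengthening of the lemma as stated. For (iv) the two arguments are close cousins: your correction term $G=\langle w,\dot w\rangle+\tfrac{\eta}{2}\|w\|^2$ is exactly the paper's auxiliary function $h$, and the paper's identity $\dot{\mathcal E}+\eta\mathcal E+\eta\dot h=0$ (with $\mathcal E=2E$) is the same cancellation you exploit. The paper, however, finishes more economically: having shown $\int_0^\infty\mathcal E\,dt<\infty$, it uses the monotonicity of $\mathcal E$ to get $\tfrac{T}{2}\mathcal E(T)\le\int_{T/2}^{T}\mathcal E\,dt\to0$, which gives $T\mathcal E(T)\to0$ in one line. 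Your virial functional $\mathcal L=tE+G$ plus the convergence-and-contradiction argument is valid (and the bound $\mathcal L\ge -\tfrac{1}{2\eta}\|\dot w\|^2$ from Young's inequality keeps it bounded below), but it is more machinery than needed; since your $E$ is already non-increasing and integrable, the paper's tail-integral trick would have closed (iv) directly.
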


The proof of Lemma~\ref{LemmaVanishingExact} follows as a special case for $f(x)=\frac{1}{2} \|A x - y\|^2$ in~\cite{Bot-2016}, and it is given in the Appendix A.2. The rate $\|A x(t) - y\| = o(t^{-1/2})$ as $t \to \infty$ given in Lemma \ref{LemmaVanishingExact} for the second order evolution equation (\ref{SecondFlow}) should be compared with the corresponding result for the first order method, i.e. the gradient decent methods, where one only obtains  $\|A x(t) - y\| = \mathcal{O}(t^{-1/2})$ as $t \to \infty$. If we consider a discrete iterative method with the number $k$ of iterations, assertion (iv) in Lemma \ref{LemmaVanishingExact} indicates that in comparison with gradient descent methods, the second order methods (\ref{SecondFlow}) need the same computational complexity for the number $k$ of iterations, but can achieve a higher order $o(k^{-1/2})$ of accuracy for the objective functional as $k \to \infty$.


\section{Convergence analysis for noisy data}
\label{Regularization}

This section is devoted to the verification of the pair $\{g_\alpha (\lambda), \phi_\alpha (\lambda) \}_\alpha$ of generator functions occurring in formula (\ref{ApproximateNoisyFreeSolution}) associated with the second order equation problem (\ref{SecondFlow}) with the inexact right-hand side $y^\delta$ and the corresponding regularization properties.

Let $\{\sigma_j; u_j, v_j\}_{j=1}^\infty$ be the well-defined singular system for the compact and injective linear operator $A$, i.e.~we have $Au_j= \sigma_j v_j$ and $A^* v_j = \sigma_j u_j$ with ordered singular values $\|A\|=\sigma_1 \geq \sigma_2 \geq \cdot\cdot\cdot \geq \sigma_j \geq \sigma_{j+1} \geq \cdot\cdot\cdot \to 0$ as $j \to \infty$. Since the eigenelements $\{u_j\}_{j=1}^\infty$ and $\{v_j\}_{j=1}^\infty$ form complete orthonormal systems in  $\mathcal{X}$ and $\mathcal{Y}$, respectively, the equation in (\ref{SecondFlow}) is equivalent to
\begin{eqnarray}\label{SVDEq1}
\langle \ddot{x}(t), u_j \rangle + \eta \langle \dot{x}(t) , u_j \rangle + \sigma^2_j \langle x(t) , u_j \rangle = \sigma_j \langle y^\delta , v_j \rangle, \quad j=1,2, ...\,.
\end{eqnarray}
Using the decomposition $x(t)=\sum_j \xi_j(t) u_j$ under the basis $\{u_j\}_{j=1}^\infty$ in $\mathcal{X}$, we obtain
\begin{eqnarray}\label{SVDEq2}
\ddot{\xi}_j(t) + \eta \dot{\xi}_j(t) + \sigma^2_j \xi_j = \sigma_j \langle y^\delta , v_j \rangle, \quad j=1,2, ...\,.
\end{eqnarray}

In order to solve the above differential equation, we have to distinguish three different cases: (a) the overdamped case: $\eta>2\|A\|$, (b) the underdamped case: there is an index $j_0$ such that $2\sigma_{j_0+1} < \eta< 2\sigma_{j_0}$, and (c) the critical damping case: an index $j_0$ exists such that $\eta=2\sigma_{j_0}$. In this section, we discuss for simplicity the overdamped case only. The other two cases are studied similarly, and the corresponding details can be found in the Appendix B. We remark that all results that concluded in the overdamped case also hold for the other two cases, but with different value of positive constants $\gamma_{1,2}, \gamma_*$ in Proposition~\ref{gAlpha} and $\gamma$ in Definition~\ref{QualificationDef}.

In the overdamped case, the characteristic equation of (\ref{SVDEq2}), possessing the form $\ddot{\xi}_j(t) + \eta \dot{\xi}_j(t) + \sigma^2_j \xi_j =0$, which has two independent solutions $\xi^1_j=e^{-\eta t/2} e^{\omega_j t/2}$ and
\linebreak $\xi^2_j=e^{-\eta t/2} e^{-\omega_j t/2}$ for all $j=1,2,...$, where $\omega_j=\sqrt{\eta^2 - 4 \sigma^2_j}>0$. Hence, it is not difficult to show that the general solution to (\ref{SVDEq2}) in the overdamped case is
\begin{eqnarray}\label{GeneralSolution}
\xi_j(t) = c^1_j e^{-\eta t/2} e^{\omega_j t/2} + c^2_j e^{-\eta t/2} e^{-\omega_j t/2} + \sigma^{-1}_j \langle y^\delta , v_j \rangle, \quad j=1,2, ...\,.
\end{eqnarray}

Introducing the initial conditions in (\ref{SecondFlow}) to obtain a system for $\{c^1_j, c^2_j\}$ yields
\begin{eqnarray}\label{EqC1C2}
\left\{\begin{array}{ll}
\sum_j \left( c^1_j + c^2_j + \sigma^{-1}_j \langle y^\delta , v_j \rangle \right) u_j = x^0 ,  \\
\sum_j \left( \frac{\eta - \omega_j}{2} c^1_j + \frac{\eta + \omega_j}{2} c^2_j \right) u_j = \dot{x}^0,
\end{array}\right.
\end{eqnarray}
or equivalently with the decomposition $x_0=\sum_j \langle x_0 , u_j \rangle u_j$ for all $j=1,2,...$
\begin{eqnarray}\label{EqC1C2j}
\left\{\begin{array}{ll}
c^1_j + c^2_j + \sigma^{-1}_j \langle y^\delta , v_j \rangle = \langle x_0 , u_j \rangle,  \\
\frac{\eta - \omega_j}{2} c^1_j + \frac{\eta + \omega_j}{2} c^2_j = \langle \dot{x}_0 , u_j \rangle,
\end{array}\right.
\end{eqnarray}
which gives
\begin{eqnarray}\label{C1C2}
\left\{\begin{array}{ll}
c^1_j = \frac{\eta + \omega_j}{2\omega_j} \langle x_0 , u_j \rangle - \frac{1}{\omega_j} \langle \dot{x}_0 , u_j \rangle - \frac{\eta + \omega_j}{2\omega_j \sigma_j}  \langle y^\delta , v_j \rangle ,  \\
c^2_j =  - \frac{\eta - \omega_j}{2\omega_j} \langle x_0 , u_j \rangle + \frac{1}{\omega_j} \langle \dot{x}_0 , u_j \rangle + \frac{\eta - \omega_j}{2\omega_j \sigma_j}  \langle y^\delta , v_j \rangle .
\end{array}\right. \qquad j=1,2, ...\,.
\end{eqnarray}
By a combination of (\ref{C1C2}), the definition of $\omega_j$ and the decomposition of $x(t)$ we obtain
\begin{eqnarray*}\label{xDeltaRegu}
\begin{array}{ll}
x(t) = \sum\limits_j \left( \frac{\eta + \sqrt{\eta^2 - 4 \sigma^2_j}}{2\sqrt{\eta^2 - 4 \sigma^2_j}} e^{- \frac{\eta - \sqrt{\eta^2 - 4 \sigma^2_j}}{2} t} - \frac{\eta - \sqrt{\eta^2 - 4 \sigma^2_j}}{2\sqrt{\eta^2 - 4 \sigma^2_j}} e^{-\frac{\eta + \sqrt{\eta^2 - 4 \sigma^2_j}}{2} t} \right) \langle x_0 , u_j \rangle u_j \\ \qquad\qquad
- \sum\limits_j  \frac{1}{2\sqrt{\eta^2 - 4 \sigma^2_j}} \left( e^{- \frac{\eta - \sqrt{\eta^2 - 4 \sigma^2_j}}{2} t} - e^{-\frac{\eta + \sqrt{\eta^2 - 4 \sigma^2_j}}{2} t} \right) \langle \dot{x}_0 , u_j \rangle u_j \\ \qquad
+ \sum\limits_j \frac{1- \left( \frac{\eta + \sqrt{\eta^2 - 4 \sigma^2_j}}{2\sqrt{\eta^2 - 4 \sigma^2_j}} e^{- \frac{\eta - \sqrt{\eta^2 - 4 \sigma^2_j}}{2} t} - \frac{\eta - \sqrt{\eta^2 - 4 \sigma^2_j}}{2\sqrt{\eta^2 - 4 \sigma^2_j}} e^{-\frac{\eta + \sqrt{\eta^2 - 4 \sigma^2_j}}{2} t} \right) }{\sigma_j } \langle y^\delta , v_j \rangle u_j  \\ \qquad
=: (1- A^* A g(t, A^* A)) x_0 + \phi(t, A^* A)\dot{x}_0 + g(t, A^* A) A^* y^\delta,
\end{array}
\end{eqnarray*}
where
\begin{eqnarray}\label{gPhiDef}
\left\{\begin{array}{ll}
g(t, \lambda) = \frac{1}{\lambda} \left( 1-  \frac{\eta + \sqrt{\eta^2 - 4 \lambda}}{2\sqrt{\eta^2 - 4 \lambda}} e^{- \frac{\eta - \sqrt{\eta^2 - 4 \lambda}}{2} t} + \frac{\eta - \sqrt{\eta^2 - 4 \lambda}}{2\sqrt{\eta^2 - 4 \lambda}} e^{-\frac{\eta + \sqrt{\eta^2 - 4 \lambda}}{2} t} \right) , \\
\phi(t, \lambda) = - \frac{1}{2\sqrt{\eta^2 - 4 \lambda}} \left( e^{- \frac{\eta - \sqrt{\eta^2 - 4 \lambda}}{2} t} - e^{-\frac{\eta + \sqrt{\eta^2 - 4 \lambda}}{2} t} \right) .
\end{array}\right.
\end{eqnarray}
We find the form required for the generator functions in formula (\ref{ApproximateNoisyFreeSolution}) if we set
\begin{equation}\label{gAlphaFun}
g_\alpha(\lambda) := g(1/\alpha, \lambda)\quad  \textrm{~and~} \quad \phi_\alpha(\lambda) := \phi(1/\alpha, \lambda).
\end{equation}
Then the corresponding bias function $r_\alpha(\lambda) = 1 - \lambda g(1/\alpha, \lambda)$ is
\begin{equation}\label{rAlpha}
r_\alpha(\lambda) = \frac{\eta + \sqrt{\eta^2 - 4 \lambda}}{2\sqrt{\eta^2 - 4 \lambda}} e^{- \frac{\eta - \sqrt{\eta^2 - 4 \lambda}}{2} \frac{1}{\alpha}} - \frac{\eta - \sqrt{\eta^2 - 4 \lambda}}{2\sqrt{\eta^2 - 4 \lambda}} e^{-\frac{\eta + \sqrt{\eta^2 - 4 \lambda}}{2} \frac{1}{\alpha}}.
\end{equation}

\begin{theorem}\label{ThmReguCase1}
The functions $\{g_\alpha(\lambda), \phi_\alpha(\lambda)\}_\alpha$  in (\ref{gAlphaFun}) based on (\ref{SecondFlow}) satisfy the conditions $(i) - (iii)$ of Proposition~\ref{gAlpha}, which means that we consequently have a regularization method with the procedure (\ref{regularization}) for the linear inverse problem (\ref{OperatorEq}).
\end{theorem}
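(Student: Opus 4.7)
My plan is to verify the three conditions of Proposition~\ref{gAlpha} directly from the closed-form expressions for $r_\alpha(\lambda)$ and $\phi_\alpha(\lambda)$ derived just above. I work throughout in the overdamped regime $\eta>2\|A\|$, so that the discriminant $\omega(\lambda):=\sqrt{\eta^2-4\lambda}$ satisfies $\omega_0\le\omega(\lambda)<\eta$ for every $\lambda\in(0,\|A\|^2]$, where $\omega_0:=\sqrt{\eta^2-4\|A\|^2}>0$. Writing the positive characteristic roots as $\mu_\pm(\lambda):=(\eta\pm\omega(\lambda))/2$, I use repeatedly the relations $\mu_++\mu_-=\eta$, $\mu_+-\mu_-=\omega$, and $\mu_+\mu_-=\lambda$.

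Conditions (i) and (ii) follow quickly. For (i), fix $\lambda>0$: since $\mu_\pm(\lambda)>0$, both $e^{-\mu_\pm/\alpha}\to 0$ as $\alpha\to 0$, so $r_\alpha(\lambda)\to 0$ and $\phi_\alpha(\lambda)\to 0$. For (ii), differentiating $r(t,\lambda)$ in $t=1/\alpha$ gives $\partial_t r=\lambda(e^{-\mu_+t}-e^{-\mu_-t})/\omega\le 0$, so $t\mapsto r(t,\lambda)$ is monotonically decreasing from $r(0,\lambda)=1$ down to $0$; this yields $0\le r_\alpha(\lambda)\le 1$ uniformly and one may take $\gamma_1=1$. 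For $\phi_\alpha$, the bound $|e^{-\mu_-/\alpha}-e^{-\mu_+/\alpha}|\le 1$ gives $|\phi_\alpha(\lambda)|\le 1/(2\omega(\lambda))\le 1/(2\omega_0)=:\gamma_2$.

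Condition (iii) is the technical core. I rewrite $\sqrt{\lambda}\,g_\alpha(\lambda)=(1-r_\alpha(\lambda))/\sqrt{\lambda}$ and use the algebraic identity
\[
1-r_\alpha(\lambda)=\frac{\mu_+\bigl(1-e^{-\mu_-/\alpha}\bigr)-\mu_-\bigl(1-e^{-\mu_+/\alpha}\bigr)}{\mu_+-\mu_-}\ge 0.
\]
The key analytic input is the elementary inequality $1-e^{-x}\le\sqrt{x}$, valid for all $x\ge 0$ (easily verified by differentiation). Applying it to the first summand of the numerator, discarding the nonnegative subtracted second summand, and then invoking $\mu_+\mu_-=\lambda$, I obtain
\[
\sqrt{\lambda}\,g_\alpha(\lambda)\le\frac{\mu_+\sqrt{\mu_-/\alpha}}{(\mu_+-\mu_-)\sqrt{\mu_+\mu_-}}=\frac{\sqrt{\mu_+(\lambda)}}{\omega(\lambda)\sqrt{\alpha}}\le\frac{\sqrt{\eta}}{\omega_0\sqrt{\alpha}},
\]
which is condition (iii) with $\gamma_*=\sqrt{\eta}/\omega_0$.

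The main obstacle I anticipate is precisely this last estimate. The numerator $\omega(1-r_\alpha)$ is a difference of two nonnegative quantities, each of which is individually only bounded by a constant independent of $\lambda$, so a naive triangle-inequality bound gives $|1-r_\alpha|\le\eta/\omega$ and produces a prefactor behaving like $1/\sqrt{\lambda}$ as $\lambda\to 0^+$, destroying the required $\alpha^{-1/2}$ scaling. Dually, the coarser bound $1-e^{-x}\le x$ yields the wrong dependence $\sqrt{\lambda}/(\alpha\omega_0)$ in $\alpha$. Respecting the sign of the difference together with the sharp intermediate estimate $1-e^{-x}\le\sqrt{x}$ is what correctly balances the $\sqrt{\lambda}$ factor against $1/\sqrt{\alpha}$ and delivers the scaling required by Proposition~\ref{gAlpha}; the other two damping regimes listed before Theorem~\ref{ThmReguCase1} are then handled along the same lines, with $\omega(\lambda)$ replaced by $\sqrt{|\eta^2-4\lambda|}$ and different values of the constants.
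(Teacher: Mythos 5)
Your proof is correct and follows essentially the same route as the paper's: the decisive step for condition (iii) is the identical decomposition of $1-r_\alpha$ into the two summands $\mu_\pm(1-e^{-\mu_\mp/\alpha})$, the same inequality $1-e^{-x}\le\sqrt{x}$ applied to the dominant term, and the same use of $\mu_+\mu_-=\lambda$ to arrive at $\gamma_*=\sqrt{\eta/(\eta^2-4\|A\|^2)}$. The only (harmless) deviation is in condition (ii), where your monotonicity argument yields the sharper constant $\gamma_1=1$ in place of the paper's $\gamma_1=\eta/(2\sqrt{\eta^2-4\|A\|^2})+1/2$, and your $\gamma_2=1/(2\sqrt{\eta^2-4\|A\|^2})$ likewise improves slightly on the paper's choice.
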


\begin{proof}
We check all of the three requirements in Proposition \ref{gAlpha}. The first condition obviously holds for $\phi_\alpha(\lambda)$ and $r_\alpha(\lambda)$, defined in (\ref{gAlphaFun}) and (\ref{rAlpha}) respectively.

The second condition can be obtained by using
\begin{eqnarray}\label{ReguThmIneq1}
\left\{\begin{array}{ll}
|r_{\alpha}(\lambda)|\leq \frac{\eta + \sqrt{\eta^2 - 4 \lambda}}{2\sqrt{\eta^2 - 4 \lambda}} = \frac{\eta}{2\sqrt{\eta^2 - 4 \lambda}} + \frac{1}{2} \leq \gamma_1:= \frac{\eta}{2\sqrt{\eta^2 - 4 \|A\|^2}} + \frac{1}{2}, \\
|\phi_{\alpha}(\lambda)|\leq \frac{1}{2\sqrt{\eta^2 - 4 \lambda}} e^{- \frac{\eta - \sqrt{\eta^2 - 4 \lambda}}{2} t} \leq \gamma_2:= \frac{\eta}{2\sqrt{\eta^2 - 4 \|A\|^2}} .
\end{array}\right.
\end{eqnarray}
It remains to bound $\gamma_*$ in Proposition \ref{gAlpha}. By the inequality $1-e^{-at}\leq \sqrt{at}$ for $a> 0$, we obtain
\begin{eqnarray*}
1-  e^{- \frac{\eta - \sqrt{\eta^2 - 4 \lambda}}{2} \frac{1}{\alpha}} \leq \sqrt{\frac{\eta - \sqrt{\eta^2 - 4 \lambda}}{2}} \frac{1}{\sqrt{\alpha}},
\end{eqnarray*}
which implies that
\begin{eqnarray*}
\begin{array}{ll}
\sqrt{\lambda} |g_{\alpha}(\lambda)| = \frac{1}{\sqrt{\lambda}} \left( 1-  \frac{\eta + \sqrt{\eta^2 - 4 \lambda}}{2\sqrt{\eta^2 - 4 \lambda}} e^{- \frac{\eta - \sqrt{\eta^2 - 4 \lambda}}{2} \frac{1}{\alpha}} + \frac{\eta - \sqrt{\eta^2 - 4 \lambda}}{2\sqrt{\eta^2 - 4 \lambda}} e^{-\frac{\eta + \sqrt{\eta^2 - 4 \lambda}}{2} \frac{1}{\alpha}} \right) \\
= \frac{1}{\sqrt{\lambda}} \frac{\eta + \sqrt{\eta^2 - 4 \lambda}}{2\sqrt{\eta^2 - 4 \lambda}} \left( 1-  e^{- \frac{\eta - \sqrt{\eta^2 - 4 \lambda}}{2} \frac{1}{\alpha}} \right) - \frac{1}{\sqrt{\lambda}} \frac{\eta - \sqrt{\eta^2 - 4 \lambda}}{2\sqrt{\eta^2 - 4 \lambda}} \left( 1 -  e^{-\frac{\eta + \sqrt{\eta^2 - 4 \lambda}}{2} \frac{1}{\alpha}} \right) \\
\leq \frac{1}{\sqrt{\lambda}} \frac{\eta + \sqrt{\eta^2 - 4 \lambda}}{2\sqrt{\eta^2 - 4 \lambda}} \left( \sqrt{\frac{\eta - \sqrt{\eta^2 - 4 \lambda}}{2}} \frac{1}{\sqrt{\alpha}} \right) = \frac{1}{\sqrt{\eta^2 - 4 \lambda}}  \sqrt{\frac{\eta + \sqrt{\eta^2 - 4 \lambda}}{2}} \frac{1}{\sqrt{\alpha}} \\
\leq \sqrt{\frac{\eta}{\eta^2 - 4 \|A\|^2}}  \frac{1}{\sqrt{\alpha}}
\end{array}
\end{eqnarray*}
Therefore, the third requirement in Proposition \ref{gAlpha} holds for $g_\alpha(\lambda)$ with
\begin{eqnarray}\label{gammaStara}
\gamma_* = \sqrt{\eta/(\eta^2 - 4 \|A\|^2)}.
\end{eqnarray}
Finally, by the proof above, we see that the upper bound $\bar{\alpha}$ for the affine regularization method with $\{g_\alpha(\lambda), \phi_\alpha(\lambda)\}_\alpha$ can be selected arbitrarily.
\end{proof}

\begin{proposition}\label{ThmQualificationCase1}
For all exponents $p>0$ the monomials $\psi(\lambda) = \lambda^{p}$ are qualifications with the constants
\begin{eqnarray}\label{gamma_a}
\gamma = \left( \frac{p \eta}{e} \right)^p \left( \frac{\eta}{2\sqrt{\eta^2 - 4 \|A\|^2}} + \frac{1}{2} \right)
\end{eqnarray}
for the second order asymptotical regularization method in the overdamped case.
\end{proposition}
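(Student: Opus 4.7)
The task is to verify the two inequalities in Definition~\ref{QualificationDef} for the monomial index function $\psi(\lambda) = \lambda^p$ in the overdamped case with the claimed $\gamma$. The natural strategy is to reuse the pointwise estimates from the proof of Theorem~\ref{ThmReguCase1}, but instead of dominating the exponentials by $1$ I will retain the slower of the two, and then apply the classical ``exponential times monomial'' maximum (\ref{Pflimiting1}) to extract the desired $\alpha^p$ decay.

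The first step is to rewrite things in terms of the natural decay rate $\mu(\lambda) := (\eta - \sqrt{\eta^2 - 4\lambda})/2$. In the overdamped case, $r_\alpha(\lambda) > 0$: both coefficients in (\ref{rAlpha}) are positive, and the first exponential dominates the second. Dropping the subtracted term therefore yields
\[
0 \leq r_\alpha(\lambda) \leq \frac{\eta + \sqrt{\eta^2 - 4\lambda}}{2\sqrt{\eta^2 - 4\lambda}}\, e^{-\mu(\lambda)/\alpha} \leq \gamma_1\, e^{-\mu(\lambda)/\alpha},
\]
where the last step uses the monotonicity argument already exploited in (\ref{ReguThmIneq1}). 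The analogous one-sided estimate $|\phi_\alpha(\lambda)| \leq \gamma_1\, e^{-\mu(\lambda)/\alpha}$ follows from $|e^{-a} - e^{-b}| \leq e^{-a}$ (with $a < b$) applied to the two exponentials in (\ref{gPhiDef}), together with the uniform bound on $1/(2\sqrt{\eta^2 - 4\lambda})$ from (\ref{ReguThmIneq1}).

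The decisive algebraic observation is that squaring $\eta - 2\mu = \sqrt{\eta^2 - 4\lambda}$ gives the identity $\lambda = \mu(\eta - \mu)$, so that $\lambda \leq \eta\mu$ and hence $\lambda^p \leq \eta^p \mu^p$ uniformly for $\lambda \in (0, \|A\|^2]$. Combining this with the previous step and then applying (\ref{Pflimiting1}) with $\xi = \mu$,
\[
\sup_{\lambda \in (0,\|A\|^2]} e^{-\mu(\lambda)/\alpha}\, \lambda^p \leq \eta^p \sup_{\mu > 0} e^{-\mu/\alpha}\, \mu^p = \eta^p \left(\frac{p}{e}\right)^p \alpha^p = \left(\frac{p\eta}{e}\right)^p \alpha^p.
\]
Multiplying this by $\gamma_1$ produces the claimed qualification constant $\gamma = \gamma_1 (p\eta/e)^p$ for both the $r_\alpha$ and the $\phi_\alpha$ inequality, thereby verifying Definition~\ref{QualificationDef}.

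The only real obstacle is recognising the correct change of variable $\mu \leftrightarrow \lambda$; once the majorant $\lambda \leq \eta\mu$ is in hand, the exponential absorbs the polynomial in the standard Showalter-style way, and the estimate requires no further case analysis. The same scheme applies to the underdamped and critically damped regimes (where $\sqrt{\eta^2 - 4\sigma_j^2}$ is imaginary or zero for some indices), with only the uniform prefactor constants changing, matching the remark made by the authors before the proposition.
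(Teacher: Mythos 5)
Your proof is correct and follows essentially the same route as the paper's: keep only the dominant exponential $e^{-\mu(\lambda)/\alpha}$ with $\mu(\lambda)=(\eta-\sqrt{\eta^2-4\lambda})/2$, absorb the prefactors into $\gamma_1$ (resp.\ $\gamma_2$) via (\ref{ReguThmIneq1}), use $\lambda\leq \eta\,\mu(\lambda)$, and invoke (\ref{Pflimiting1}) with $\xi=\mu$. The only cosmetic difference is that the paper derives $\mu(\lambda)\geq \lambda/\eta$ by rationalising $\frac{\eta-\sqrt{\eta^2-4\lambda}}{2}=\frac{4\lambda}{2(\eta+\sqrt{\eta^2-4\lambda})}$ rather than from your identity $\lambda=\mu(\eta-\mu)$; these are the same estimate.
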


\begin{proof}
Set $\xi= (\eta - \sqrt{\eta^2 - 4 \lambda})/2$ in (\ref{Pflimiting1}) and use the following inequality
\begin{eqnarray*}
\frac{\eta - \sqrt{\eta^2 - 4 \lambda}}{2} = \frac{4 \lambda}{2(\eta + \sqrt{\eta^2 - 4 \lambda})} \geq \frac{\lambda}{\eta}
\end{eqnarray*}
and the inequality (\ref{ReguThmIneq1}), and we can derive that
\begin{eqnarray*}
\begin{array}{ll}
\sup\limits_{\lambda\in(0,\|A\|^2]} |r_{\alpha}(\lambda)| \psi(\lambda) \leq \sup\limits_{\lambda\in(0,\|A\|^2]}  \frac{\eta + \sqrt{\eta^2 - 4 \lambda}}{2\sqrt{\eta^2 - 4 \lambda}} e^{- \frac{\eta - \sqrt{\eta^2 - 4 \lambda}}{2} \frac{1}{\alpha}} \lambda^{p} \\ \qquad\qquad \leq
\gamma_1 \sup\limits_{\lambda\in(0,\|A\|^2]} e^{- \frac{\eta - \sqrt{\eta^2 - 4 \lambda}}{2} \frac{1}{\alpha}} \lambda^{p} \\ \qquad\qquad \leq
\gamma_1 \eta^p \sup\limits_{\lambda\in(0,\|A\|^2]} e^{- \frac{\eta - \sqrt{\eta^2 - 4 \lambda}}{2} \frac{1}{\alpha}} \left( \frac{\eta - \sqrt{\eta^2 - 4 \lambda}}{2} \right)^{p} \\ \qquad\qquad \leq
\gamma_1 \eta^p \sup\limits_{\xi\in(0,  (\eta - \sqrt{\eta^2 - 4 \|A\|^2})/2]} e^{- \xi/ \alpha} \left( \xi \right)^{p} \leq \gamma_1 \eta^p \left( \frac{p}{e} \right)^p \alpha^{p} = \gamma \alpha^{p}.
\end{array}
\end{eqnarray*}
Similarly, we have
\begin{eqnarray*}
\begin{array}{ll}
\sup\limits_{\lambda\in(0,\|A\|^2]} |\phi_\alpha(\lambda)| \psi(\lambda) \leq \sup\limits_{\lambda\in(0,\|A\|^2]}  \frac{1}{2\sqrt{\eta^2 - 4 \lambda}} e^{- \frac{\eta - \sqrt{\eta^2 - 4 \lambda}}{2} \frac{1}{\alpha}} \lambda^{p} \\ \qquad \leq
\gamma_2 \sup\limits_{\lambda\in(0,\|A\|^2]} e^{- \frac{\eta - \sqrt{\eta^2 - 4 \lambda}}{2} \frac{1}{\alpha}} \lambda^{p} \leq
\gamma_2 \eta^p \left( \frac{p}{e} \right)^p \alpha^{p} \leq \gamma \alpha^{p},
\end{array}
\end{eqnarray*}
which completes the proof.
\end{proof}

The assertion of Theorem~\ref{ThmReguCase1} and analogues to Proposition~\ref{ThmQualificationCase1} can  be found in the Appendix B for the other values of the constant $\eta>0$ occurring as a parameter in the second order differential equation of problem (\ref{SecondFlow}). In particular, this means the underdamped case (b), as well as the critical damping case (c).


\section{Convergence rates results}
\label{ConvergenceRate}

Under the general assumptions of the previous sections, the rate of convergence of $x(T)\to x^\dagger$ as $T\to\infty$ in the case of precise data, and of $x(T^*(\delta))\to x^\dagger$ as $\delta\to0$ in the case of noisy data, can be arbitrarily slow (cf.~\cite{Schock}) for solutions $x^\dagger$ which are not smooth enough. In order to prove convergence rates, some kind of smoothness assumptions imposed on the exact solution must be employed. Such smoothness assumptions can be expressed by range-type source conditions (cf., e.g.,~\cite{engl1996regularization}), approximate source conditions (cf.~\cite{Hofmann2009}), and variational source conditions occurring in form of
variational inequalities (cf.~\cite{Hofmann2007}). Now, range-type source conditions have the advantage that, in many cases, interpretations in the form of differentiability of the exact solution, boundary conditions, or similar properties are accessible. Hence, we focus in the following on the traditional range-type source conditions only. More precisely, we assume that an element $v_0\in \mathcal{X}$ and numbers $p>0$ and $\rho\geq0$ exist such that
\begin{equation}\label{SourceCondition}
x_0 - x^\dagger = \left( A^* A \right)^{p} v_0 \quad \textrm{~with~} \quad \|v_0\|\leq \rho.
\end{equation}
Moreover, the initial data $\dot{x}_0$ is  supposed to satisfy such source conditions as well, i.e.
\begin{equation}\label{SourceConditionVelocity}
\dot{x}_0 = \left( A^* A \right)^{p} v_1 \quad \textrm{~with~} \quad \|v_1\|\leq \rho.
\end{equation}
For the choice $\dot{x}_0=0$, the condition (\ref{SourceConditionVelocity}) is trivially satisfied. However, following the discussions in Sections 2 and 6, the regularized solutions essentially depend on the value of $\dot{x}_0$. A good choice of $\dot{x}_0$ provides an acceleration of the regularization algorithm. In practice, one can choose a relatively small value of $\dot{x}_0$ to balance the source condition and the acceleration effect.

\begin{proposition}\label{profileFunSecondOrder}
Under the source conditions (\ref{SourceCondition}) and (\ref{SourceConditionVelocity}),
$f(\alpha) = 2 \gamma \rho\, \alpha^{p}$ is a profile function for the second order asymptotical regularization, where the constant $\gamma$ is defined in (\ref{gamma_a}).
\end{proposition}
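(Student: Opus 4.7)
The plan is to verify the profile function inequality $\|x_\alpha - x^\dagger\| \leq 2\gamma\rho\,\alpha^p$ by a direct spectral calculus estimate, using the decomposition of the noise-free intermediate quantity $x_\alpha$ from formula (\ref{ApproximateNoisyFreeSolution}) together with the qualification statement already established in Proposition~\ref{ThmQualificationCase1}. All ingredients are in place; the task is essentially to combine them cleanly.

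First I would rewrite $x_\alpha - x^\dagger$ in a form where the source conditions can be applied. Starting from (\ref{ApproximateNoisyFreeSolution}) and using the identity $I - A^*A\,g_\alpha(A^*A) = r_\alpha(A^*A)$ together with $g_\alpha(A^*A) A^*A\, x^\dagger = x^\dagger - r_\alpha(A^*A) x^\dagger$, one obtains the clean decomposition
\begin{equation*}
x_\alpha - x^\dagger = r_\alpha(A^*A)(x_0 - x^\dagger) + \phi_\alpha(A^*A)\dot{x}_0,
\end{equation*}
exactly as anticipated in (\ref{errorBounds2}). Then the triangle inequality splits the noise-free error into two contributions that can be treated in parallel.

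Next I would plug in the source conditions (\ref{SourceCondition}) and (\ref{SourceConditionVelocity}) to get
\begin{equation*}
x_\alpha - x^\dagger = r_\alpha(A^*A)(A^*A)^p v_0 + \phi_\alpha(A^*A)(A^*A)^p v_1,
\end{equation*}
with $\|v_0\|,\|v_1\|\leq \rho$. Applying the spectral theorem to the self-adjoint operator $A^*A$ and using the operator norm bound $\|h(A^*A)\| \leq \sup_{\lambda\in(0,\|A\|^2]} |h(\lambda)|$, each summand is controlled by $\bigl(\sup_{\lambda} |r_\alpha(\lambda)|\lambda^p\bigr)\rho$ and $\bigl(\sup_{\lambda} |\phi_\alpha(\lambda)|\lambda^p\bigr)\rho$ respectively. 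Proposition~\ref{ThmQualificationCase1} shows that the monomial $\psi(\lambda)=\lambda^p$ is a qualification of the regularization method with constant $\gamma$ given by (\ref{gamma_a}), i.e.\ both suprema are bounded by $\gamma\alpha^p$. Adding the two contributions gives $\|x_\alpha - x^\dagger\| \leq 2\gamma\rho\,\alpha^p$, which is the desired profile function.

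I do not expect a genuine obstacle here: the main technical content (the qualification inequalities for $r_\alpha$ and $\phi_\alpha$) has already been carried out in Proposition~\ref{ThmQualificationCase1} for the overdamped case, and the analogous bounds for the underdamped and critically damped cases are deferred to the Appendix, as noted after Theorem~\ref{ThmReguCase1}. The only point to be explicit about is that $f(\alpha) = 2\gamma\rho\,\alpha^p$ is indeed an index function in the sense of Definition~\ref{Index} (continuous, strictly increasing, vanishing at zero), which is immediate for $p>0$ and $\gamma,\rho>0$.
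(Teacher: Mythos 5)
Your proposal is correct and follows essentially the same route as the paper: decompose the noise-free error as $r_\alpha(A^*A)(x_0-x^\dagger)+\phi_\alpha(A^*A)\dot{x}_0$, insert the source conditions, bound each term by the spectral supremum, and invoke the qualification constant $\gamma$ from Proposition~\ref{ThmQualificationCase1} to obtain $2\gamma\rho\,\alpha^p$. Your extra remark that $f$ is indeed an index function is a harmless addition the paper leaves implicit.
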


\begin{proof}
Combining the formulas (\ref{errorBounds2}), (\ref{SourceCondition}) and (\ref{SourceConditionVelocity}) yields
\begin{eqnarray*}
\begin{array}{ll}
\|x(1/\alpha) - x^\dagger\| \leq \|r_\alpha(A^* A) (x_0 - x^\dagger)\| + \|\phi_\alpha(A^* A)\dot{x}_0\| \\ \qquad\qquad
\leq \|r_\alpha(A^*A) \left( A^* A \right)^{p} v_0 \| + \|\phi_\alpha(A^*A) \left( A^* A \right)^{p} v_1 \| \\ \qquad\qquad
\leq \rho \sup \limits _{0\leq \lambda \leq \|A\|^2} |r_\alpha(\lambda)|\, \lambda^{p} + \rho \sup \limits_{0\leq \lambda \leq \|A\|^2} |\phi_\alpha(\lambda)|\, \lambda^{p} \leq 2 \gamma \rho \,\alpha^{p}.
\end{array}
\end{eqnarray*}
This proves the proposition.
\end{proof}

\begin{theorem}\label{ThmPriori}
(A priori choice of the regularization parameter) If the terminating time $T^*$ of the second order flow (\ref{SecondFlow}) is selected by the a priori parameter choice
\begin{equation}\label{Tpriori}
T^* (\delta) =c_0 \rho^{2/(2p+1)} \,\delta^{-2/(2p+1)}
\end{equation}
with the constant $c_0=(2\gamma)^{2/(2p+1)}$, then we have the error estimate for $\delta\in(0,\delta_0]$
\begin{equation}\label{ErrorEstimatePriori}
\| x(T^*) - x^\dagger \| \leq c \rho^{1/(2p+1)}\, \delta^{2p/(2p+1)},
\end{equation}
where the constant $c=(1 + \gamma_*) (2\gamma)^{1/(2p+1)}$ and $\delta_0 = 2\gamma\rho \eta^{2p+1}$.
\end{theorem}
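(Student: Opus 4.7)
The plan is to combine the general a priori error estimate (\ref{Estimator3}) derived in Section 2 with the profile function established in Proposition~\ref{profileFunSecondOrder}. Under the source conditions (\ref{SourceCondition}) and (\ref{SourceConditionVelocity}), that proposition supplies the profile function $f(\alpha)=2\gamma\rho\,\alpha^{p}$, and by the identification (\ref{gAlphaFun}) the regularization parameter $\alpha$ is tied to the terminating time of the flow via $T=1/\alpha$. All the pieces needed to feed the abstract a priori recipe are therefore already in place.

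First I would form the auxiliary index function from (\ref{Theta}) explicitly, obtaining $\Theta(\alpha)=\sqrt{\alpha}\,f(\alpha)=2\gamma\rho\,\alpha^{p+1/2}$, which is strictly monotone on $(0,\bar\alpha]$ and hence invertible. Solving $\Theta(\alpha_*)=\delta$ gives
\[
\alpha_* \;=\; \Theta^{-1}(\delta) \;=\; \left(\frac{\delta}{2\gamma\rho}\right)^{\!2/(2p+1)},
\]
so that $T^*=1/\alpha_*=(2\gamma)^{2/(2p+1)}\,\rho^{2/(2p+1)}\,\delta^{-2/(2p+1)}$. This is exactly the a priori rule (\ref{Tpriori}) with the prescribed constant $c_0=(2\gamma)^{2/(2p+1)}$, confirming that the terminating time announced in the theorem is precisely the optimal balance supplied by the machinery of Section 2.

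Next, applying (\ref{Estimator3}) to this choice yields
\[
\|x(T^*)-x^\dagger\| \;\leq\; (1+\gamma_*)\,f(\alpha_*) \;=\; (1+\gamma_*)\,2\gamma\rho\left(\frac{\delta}{2\gamma\rho}\right)^{\!2p/(2p+1)},
\]
and a one-line regrouping of the exponents turns this into $(1+\gamma_*)(2\gamma)^{1/(2p+1)}\rho^{1/(2p+1)}\delta^{2p/(2p+1)} = c\,\rho^{1/(2p+1)}\,\delta^{2p/(2p+1)}$, which is the asserted bound (\ref{ErrorEstimatePriori}).

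The only remaining point is the smallness condition $\delta\leq\delta_0 = 2\gamma\rho\,\eta^{2p+1}$. It simply ensures that the chosen $\alpha_*$ lies in the admissibility window on which the overdamped-case estimates for $r_\alpha$, $\phi_\alpha$, and $g_\alpha$ established in Section 4 (together with the qualification bound from Proposition~\ref{ThmQualificationCase1}) remain effective; substituting $\alpha_*=(\delta/(2\gamma\rho))^{2/(2p+1)}$ into the natural ceiling $\alpha_*\leq\eta^{2}$ is equivalent to $\delta\leq 2\gamma\rho\,\eta^{2p+1}=\delta_0$. Since every step reduces to a substitution into results already proven, I do not expect any genuine analytical obstacle: the main care lies in bookkeeping the exponents and constants consistently when translating between the parameter $\alpha$ and the time $T=1/\alpha$.
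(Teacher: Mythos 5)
Your proposal is correct and follows essentially the same route as the paper: balancing $\Theta(\alpha_*)=\sqrt{\alpha_*}\,f(\alpha_*)=\delta$ with the profile function from Proposition~\ref{profileFunSecondOrder}, setting $T^*=1/\alpha_*$, invoking (\ref{Estimator3}), and identifying $\delta_0$ via the constraint $\alpha_*\leq\bar\alpha=\eta^2$. The constants and exponents all check out, so nothing further is needed.
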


\begin{proof}
By the discussion in Section 2, we choose the value of $\alpha_*$ such that \linebreak $\delta = \Theta(\alpha_*) = \sqrt{\alpha_*} f(\alpha_*) = 2 \gamma\rho \alpha^{p+1/2}_*$. By solving this equation we directly obtain $\alpha_*=(2\gamma)^{-2/(2p+1)}$ $\rho^{-2/(2p+1)} \delta^{2/(2p+1)}$. Setting $T^*=1/\alpha_*$ and using the estimate (\ref{Estimator3}), this gives the relations (\ref{Tpriori}) and
\begin{eqnarray*}
\begin{array}{ll}
\| x(T^*) - x^\dagger \| = e(x^\dagger, \alpha, \delta) \leq (1 + \gamma_*) f(\alpha_*) = (1 + \gamma_*) 2 \gamma \rho (T^*)^{-p} \\ \qquad\qquad\qquad
= \left\{ (1 + \gamma_*) (2\gamma)^{1/(2p+1)} \right\} \rho^{1/(2p+1)} \delta^{2p/(2p+1)}.
\end{array}
\end{eqnarray*}
Finally, we use the inequality $\alpha_*\leq \bar{\alpha}=\eta^2$ to get the bound $\delta_0$ (the upper bound $\bar{\alpha}=\eta^2$ is required for the affine regularization (\ref{regularization}) in both the underdamped and critical cases; see the appendix for details).
\end{proof}

In practice, the stopping rule in (\ref{Tpriori}) is not realistic, since a good terminating time point $T^*$ requires knowledge of $\rho$ (a characteristic of unknown exact solution). Such knowledge, however, is not necessary in the case of a posteriori parameter choices. In the following two subsections, we consider two types of discrepancy principles for choosing the terminating time point a posteriori.

\subsection{Morozov's conventional discrepancy principle}

In our setting, Morozov's conventional discrepancy principle means searching for values $T>0$ satisfying the equation
\begin{eqnarray}\label{discrepancy1}
\chi(T):= \|A x(T)-y^\delta\| - \tau \delta=0,
\end{eqnarray}
where $\tau> \gamma_1 \geq1$ is a constant, and the number $\gamma_1$ is defined in Proposition \ref{gAlpha}.

\begin{lemma}\label{Rootdiscrepancy}
If $\|Ax_0-y^\delta\|> \tau \delta$, then the function $\chi(T)$ has at least one root.
\end{lemma}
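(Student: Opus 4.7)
The plan is to apply the intermediate value theorem to the continuous scalar function $\chi(T)$ on $[0,\infty)$, showing that it is strictly positive at $T=0$ and eventually non-positive as $T\to\infty$.

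First, I would establish continuity of $\chi$. Since Theorem~\ref{ExsisteceUnique} guarantees that $x(\cdot):[0,\infty)\to\mathcal{X}$ is locally absolutely continuous (in particular continuous), the composition $T\mapsto Ax(T)-y^\delta$ is continuous into $\mathcal{Y}$ because $A$ is bounded, and then the norm is continuous. Hence $\chi:[0,\infty)\to\mathbb{R}$ is continuous.

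Next, evaluate the two endpoints. At $T=0$, using the initial condition $x(0)=x_0$, the hypothesis gives
\begin{equation*}
\chi(0)=\|Ax_0-y^\delta\|-\tau\delta>0.
\end{equation*}
For the behaviour as $T\to\infty$, I would invoke Lemma~\ref{LemmaVanishing}, which states that $\lim_{t\to\infty}\|Ax(t)-y^\delta\|\le\delta$. Combined with the standing assumption $\tau>\gamma_1\ge 1$ from the definition in (\ref{discrepancy1}), this yields
\begin{equation*}
\limsup_{T\to\infty}\chi(T)\le\delta-\tau\delta=(1-\tau)\delta<0.
\end{equation*}
In particular, there exists some $T_1>0$ with $\chi(T_1)<0$.

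Finally, I would apply the intermediate value theorem to $\chi$ on $[0,T_1]$: since $\chi$ is continuous, $\chi(0)>0$, and $\chi(T_1)<0$, there exists a point $T^\ast\in(0,T_1)$ with $\chi(T^\ast)=0$. There is no real obstacle here; the only non-trivial ingredient is the limit relation (\ref{ResidualErrorLimit}) of Lemma~\ref{LemmaVanishing}, which is already established, and the condition $\tau>1$ built into the discrepancy principle is precisely what is needed to guarantee a sign change at infinity.
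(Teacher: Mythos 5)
Your proof is correct and follows essentially the same route as the paper: continuity from Theorem~\ref{ExsisteceUnique}, positivity at $T=0$ from the hypothesis, negativity in the limit from (\ref{ResidualErrorLimit}) together with $\tau>1$, and the intermediate value theorem. The only (welcome) refinement is that you spell out the $\limsup$ and the IVT step, which the paper leaves implicit.
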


\begin{proof}
The continuity of $\chi(T)$ is obvious according to Theorem \ref{ExsisteceUnique}. On the other hand, from (\ref{ResidualErrorLimit}) and the assumption of the lemma, we conclude that
\begin{eqnarray*}\label{TwoLimits}
\lim_{T\to \infty} \chi(T) \leq (1-\tau)\delta  <0 \quad \textrm{~and~} \quad \chi(0) = \|Ax_0-y^\delta\| - \tau \delta >0,
\end{eqnarray*}
which implies the existence of the root of $\chi(T)$.
\end{proof}

\begin{theorem}\label{ThmPosteriori}
(A posteriori choice I of the regularization parameter) Suppose that $\|Ax_0-y^\delta\|> \tau \delta$ and the source conditions (\ref{SourceCondition}) and (\ref{SourceConditionVelocity}) hold. If the terminating time $T^*$ of the second order flow (\ref{SecondFlow}) is chosen according to the discrepancy principle (\ref{discrepancy1}), we have for any $\delta\in(0, \delta_0]$ and $p> 0$ the error estimates
\begin{equation}\label{ErrorEstimatePrioriT}
T^* \leq C_0 \rho^{2/(2p+1)} \delta^{-2/(2p+1)}
\end{equation}
and
\begin{equation}\label{ErrorEstimatePriori}
\| x(T^*) - x^\dagger \| \leq C_1 \delta^{2p/(2p+1)},
\end{equation}
where $\delta_0$ is defined in the Theorem \ref{ThmPriori}, $C_0:= (\tau-\gamma_1)^{-2/(2p+1)} (2\gamma)^{2/(2p+1)}$, and $C_1:= \left( \tau + \gamma_1 \right)^{2p/(2p+1)} (\gamma_1+\gamma_2)^{1/(2p+1)} + \gamma_* (\tau-\gamma_1)^{-1/(2p+1)} (2\gamma)^{1/(2p+1)}$.
\end{theorem}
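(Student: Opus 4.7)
The plan is to exploit the identity
\begin{equation*}
Ax(t) - y^\delta = A(x_t - x^\dagger) - r(t, AA^*)(y^\delta - y),
\end{equation*}
valid for every $t > 0$, where $x_t$ denotes the noise-free trajectory (\ref{ApproximateNoisyFreeSolution}) at $\alpha = 1/t$, whose difference with $x^\dagger$ equals $r(t, A^*A)(x_0 - x^\dagger) + \phi(t, A^*A)\dot x_0$. This identity follows by plugging $y^\delta = Ax^\dagger + (y^\delta - y)$ into (\ref{regularization}) and using the commutation $A r(t, A^*A) = r(t, AA^*)A$, and it serves to transfer information between the residual in $\mathcal{Y}$, controlled by the discrepancy principle, and the reconstruction error in $\mathcal{X}$ at the stopping time $T^*$, whose existence is granted by Lemma~\ref{Rootdiscrepancy}.

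To establish (\ref{ErrorEstimatePrioriT}), I start from $\|Ax(T^*) - y^\delta\| = \tau\delta$ and apply the reverse triangle inequality together with condition (ii) of Proposition~\ref{gAlpha} to obtain
\begin{equation*}
(\tau - \gamma_1)\delta \leq \|r(T^*, AA^*)A(x_0 - x^\dagger)\| + \|\phi(T^*, AA^*)A\dot x_0\|.
\end{equation*}
Substituting the source conditions (\ref{SourceCondition})--(\ref{SourceConditionVelocity}), commuting $A$ inside, and using the isometry $\|Aw\| = \|(A^*A)^{1/2}w\|$ recasts the two summands as $\|r(T^*, A^*A)(A^*A)^{p+1/2}v_0\|$ and $\|\phi(T^*, A^*A)(A^*A)^{p+1/2}v_1\|$. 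The argument of Proposition~\ref{ThmQualificationCase1} is exponent-agnostic, so with $\psi(\lambda)=\lambda^{p+1/2}$ it bounds each summand by $\gamma\rho(\alpha^*)^{p+1/2}$, where $\alpha^* = 1/T^*$. Solving $(\tau - \gamma_1)\delta \leq 2\gamma\rho(\alpha^*)^{p+1/2}$ for $\alpha^*$ and inverting produces the stated upper bound on $T^*$.

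For the error bound (\ref{ErrorEstimatePriori}) I split $x(T^*) - x^\dagger = (x_{T^*} - x^\dagger) + g(T^*, A^*A)A^*(y^\delta - y)$. The noise-propagation piece is controlled by $\gamma_*\delta\sqrt{T^*}$ via condition (iii) of Proposition~\ref{gAlpha}; inserting the $T^*$-bound turns it into $\gamma_*(\tau - \gamma_1)^{-1/(2p+1)}(2\gamma)^{1/(2p+1)}\rho^{1/(2p+1)}\delta^{2p/(2p+1)}$, which matches the second summand of $C_1$. For the approximation piece I note that $x_{T^*} - x^\dagger = (A^*A)^p\tilde v$ with $\tilde v = r(T^*, A^*A)v_0 + \phi(T^*, A^*A)v_1$ and $\|\tilde v\| \leq (\gamma_1 + \gamma_2)\rho$ by condition (ii), and then invoke the Vainikko interpolation inequality
\begin{equation*}
\|(A^*A)^p w\| \leq \|A(A^*A)^p w\|^{2p/(2p+1)}\|w\|^{1/(2p+1)},\qquad w \in \mathcal{X},
\end{equation*}
which follows from H\"older applied to the spectral measure of $A^*A$. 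Combined with the sharp bound $\|A(x_{T^*} - x^\dagger)\| \leq (\tau + \gamma_1)\delta$ (a direct consequence of the same residual identity), this yields the first summand of $C_1$.

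The main obstacle is mostly administrative: Proposition~\ref{ThmQualificationCase1} is stated at monomial $\lambda^p$ but the $T^*$-estimate needs $\lambda^{p+1/2}$; since the argument is structurally identical at either exponent, the symbol $\gamma$ appearing in $C_0$ and $C_1$ must be read as the qualification constant at whichever exponent the context demands. The restriction $\delta \leq \delta_0 = 2\gamma\rho\eta^{2p+1}$ plays the same role as in Theorem~\ref{ThmPriori}: it keeps $\alpha^*$ within the admissible range $(0, \bar\alpha]$ on which the qualification inequality (\ref{QualificationFun}) has been verified.
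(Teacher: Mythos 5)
Your proposal is correct and follows essentially the same route as the paper: the residual identity plus the discrepancy principle gives two-sided control of $\|A(x_{T^*}-x^\dagger)\|$, the qualification at exponent $p+1/2$ yields the bound on $T^*$, the moment (interpolation) inequality handles the approximation error, and $\gamma_*\sqrt{T^*}\,\delta$ handles the noise propagation. Your remark that the qualification constant must be read at exponent $p+1/2$ rather than $p$ is a fair point that the paper's own proof passes over silently.
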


\begin{proof}
Using the moment inequality $\|B^p u\| \leq \|B^q u\|^{p/q} \|u\|^{1-p/q}$ and the source conditions (\ref{SourceCondition})-(\ref{SourceConditionVelocity}), we deduce that
\begin{eqnarray}\label{PosterioriProofIneq1}
\begin{array}{ll}
\| r_{\alpha}(A^*A^*) (x_0 - x^\dagger) + \phi_{\alpha}(A^*A^*) \dot{x}_0 \|  \\ \quad
= \| (A^*A)^{(p+1/2)} \left( r_{\alpha}(A^*A) v_0 + \phi_{\alpha}(A^*A) v_1 \right) \| \\ \quad
\leq \| (A^*A)^{(p+1/2)} \left( r_{\alpha}(A^*A) v_0 + \phi_{\alpha}(A^*A) v_1 \right) \|^{2p/(2p+1)} \\ \qquad\qquad
\cdot \| r_{\alpha}(A^*A) v_0 + \phi_{\alpha}(A^*A) v_1 \|^{1/(2p+1)} \\ \quad
\leq \| A r_{\alpha}(A^*A) (x_0 - x^\dagger) + A \phi_{\alpha}(A^*A^*) \dot{x}_0 \|^{2p/(2p+1)} \\ \qquad\qquad
\cdot \left( \| r_{\alpha}(A^*A) v_0 \| + \| \phi_{\alpha}(A^*A) v_1 \| \right)^{1/(2p+1)}.
\end{array}
\end{eqnarray}
Since the terminating time $T^*$ is chosen according to the discrepancy principle (\ref{discrepancy1}), we derive that
\begin{eqnarray}\label{PosterioriProofIneq2}
\begin{array}{ll}
\tau \delta = \|A x(T^*)-y^\delta\| \\ \qquad
= \left\| A r_{1/T^*}(A^*A) (x_0- x^\dagger) + A \phi_{1/T^*}(A^*A) \dot{x}_0 - r_{1/T^*}(A^*A) (y^\delta-y) \right\| \\ \qquad
\geq \| A r_{1/T^*}(A^*A) (x_0- x^\dagger) + A \phi_{1/T^*}(A^*A) \dot{x}_0\| - \|r_{1/T^*}(A^*A) (y^\delta-y) \|
\end{array}
\end{eqnarray}
Now we combine the estimates (\ref{PosterioriProofIneq1}) and (\ref{PosterioriProofIneq2}) to obtain, with the source conditions, that
\begin{eqnarray}\label{PosterioriProofIneq3}
\begin{array}{ll}
\| r_{\alpha}(A^*A^*) (x_0 - x^\dagger) + \phi_{\alpha}(A^*A^*) \dot{x}_0 \|  \\ \quad
\leq \| A r_{\alpha}(A^*A) (x_0 - x^\dagger) + A \phi_{\alpha}(A^*A^*) \dot{x}_0 \|^{2p/(2p+1)} \\ \qquad\qquad
\cdot \left( \| r_{\alpha}(A^*A) v_0 \| + \| \phi_{\alpha}(A^*A) v_1 \| \right)^{1/(2p+1)} \\ \quad
\leq \left( \tau \delta + \|r_{1/T^*}(A^*A) (y^\delta-y) \| \right)^{2p/(2p+1)} \left( (\gamma_1+\gamma_2)\rho \right)^{1/(2p+1)} \\ \quad
\leq c_1 \rho^{1/(2p+1)} \delta^{2p/(2p+1)}
\end{array}
\end{eqnarray}
where $c_1:= \left( \tau + \gamma_1 \right)^{2p/(2p+1)} (\gamma_1+\gamma_2)^{1/(2p+1)}$.

On the other hand, in a similar fashion to (\ref{PosterioriProofIneq2}), it is easy to show that
\begin{eqnarray*}
\begin{array}{ll}
\tau \delta \leq \| A r_{1/T^*}(A^*A) (x_0- x^\dagger) + A \phi_{1/T^*}(A^*A) \dot{x}_0\| + \|r_{1/T^*}(A^*A) (y^\delta-y) \| \\ \quad
\leq \| A r_{1/T^*}(A^*A) (x_0- x^\dagger) + A \phi_{1/T^*}(A^*A) \dot{x}_0\| + \gamma_1 \delta.
\end{array}
\end{eqnarray*}
If we combine the above inequality with the source conditions (\ref{SourceCondition})-(\ref{SourceConditionVelocity}) and the qualification inequality (\ref{QualificationDef}), we obtain
\begin{eqnarray*}
\begin{array}{ll}
(\tau-\gamma_1)\delta \leq \| A r_{1/T^*}(A^*A) (x_0- x^\dagger) + A \phi_{1/T^*}(A^*A) \dot{x}_0\| \\ \quad
\leq  \| (A^*A)^{p+1/2} r_{1/T^*}(A^*A) v_0 \| + \|(A^*A)^{p+1/2} \phi_{1/T^*}(A^*A) v_1\| \leq 2\rho \gamma (T^*)^{-(p+1/2)},
\end{array}
\end{eqnarray*}
which yields the estimate (\ref{ErrorEstimatePrioriT}).
Finally, using (\ref{ErrorEstimatePrioriT}) and (\ref{PosterioriProofIneq3}), we conclude that
\begin{eqnarray*}
\begin{array}{ll}
\| x(T^*) - x^\dagger \| \leq \| r_{\alpha}(A^*A^*) (x_0 - x^\dagger) + \phi_{\alpha}(A^*A^*) \dot{x}_0 \| + \gamma_* \sqrt{T^*} \delta \\ \quad
\leq c_1 \rho^{1/(2p+1)} \delta^{2p/(2p+1)} + \gamma_* \sqrt{C_0} \rho^{1/(2p+1)} \delta^{2p/(2p+1)} = C_1 \rho^{1/(2p+1)} \delta^{2p/(2p+1)}.
\end{array}
\end{eqnarray*}
This completes the proof.
\end{proof}

\begin{remark}
If the function $\chi(T)$ has more than one root, we recommend selecting $T^*$ from the rule
\begin{eqnarray*}
\chi(T^*) = 0< \chi(T), \quad \forall T<T^*.
\end{eqnarray*}
In other words, $T^*$ is the first time point for which the size of the residual $\|A x(T)-y^\delta\|$ has about the order of the data error. By Lemma~\ref{Rootdiscrepancy} such $T^*$ always exists.

It is easy to show that $\chi(T)$ is bounded by a decreasing function as the proof of Proposition \ref{PropositionDiscrepancyFun} below will show. Roughly speaking,  the trend of $\chi(T)$ is to be a decreasing function, where oscillations may occur, and we refer to Figure~\ref{Fig:damping}. On the other hand, one can anticipate that the more oscillations of the discrepancy function $\chi(T)$ occur, the smaller the damping parameter $\eta$ is. This is an expected result due to the behaviour of damped Hamiltonian systems.
\end{remark}

\begin{figure}[!htb]
\centering
\includegraphics[width=1.\textwidth]{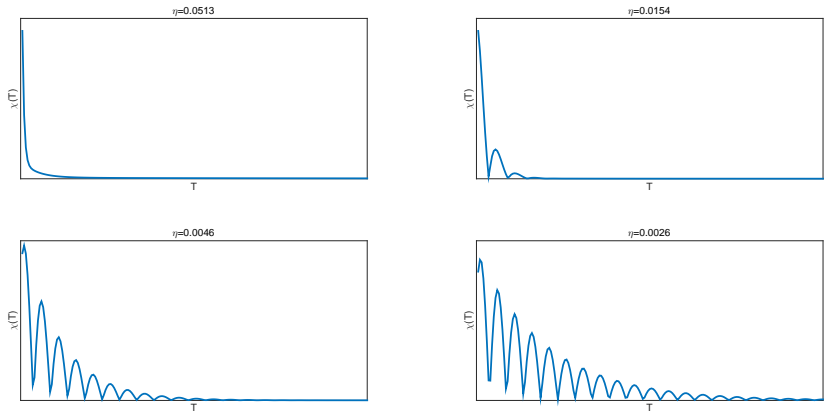}
{\footnotesize
\caption{The behaviour of $\chi(T)$ from (\ref{discrepancy1}) with different damping parameters $\eta$.}}
\label{Fig:damping}
\end{figure}

\subsection{The total energy discrepancy principle}

For presenting a newly developed discrepancy principle, we introduce the total energy discrepancy function as follows:
\begin{eqnarray}\label{discrepancy0}
\chi_{te}(T):= \|A x(T)-y^\delta\|^2+ \|\dot{x}(T)\|^2 - \tau^2 \delta^2,
\end{eqnarray}
where $\tau> \gamma_1$ as before.

\begin{proposition}\label{PropositionDiscrepancyFun}
The function $\chi_{te}(T)$ is continuous and monotonically non-increasing. If $\|A x_0-y^\delta\|^2+ \|\dot{x}_0\|^2> \tau^2 \delta^2$, then  $\chi_{te}(T)=0$  has a unique solution.
\end{proposition}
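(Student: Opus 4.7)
The plan is to establish continuity, monotonicity, and uniqueness of the root in that order, leveraging the energy identity induced by the ODE.

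First I would check continuity, which is immediate: by Theorem~\ref{ExsisteceUnique} together with the definition of a strong global solution, both $x(\cdot)$ and $\dot{x}(\cdot)$ are locally absolutely continuous, so $T\mapsto\|Ax(T)-y^\delta\|^2+\|\dot x(T)\|^2$ is continuous on $[0,\infty)$.

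Next I would prove monotonicity by a direct energy computation. Set $E(T):=\|Ax(T)-y^\delta\|^2+\|\dot x(T)\|^2$. Differentiating and rearranging gives
\begin{equation*}
\tfrac{1}{2}\tfrac{d}{dT}E(T)=\langle \dot x(T),\,A^*Ax(T)-A^*y^\delta+\ddot x(T)\rangle.
\end{equation*}
Substituting the ODE $\ddot x(T)+\eta\dot x(T)+A^*Ax(T)=A^*y^\delta$ collapses the bracket to $-\eta\dot x(T)$, yielding
\begin{equation*}
\tfrac{d}{dT}\chi_{te}(T)=-2\eta\|\dot x(T)\|^2\le 0,
\end{equation*}
so $\chi_{te}$ is monotonically non-increasing.

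For existence of a root I would apply the intermediate value theorem. At $T=0$, the hypothesis gives $\chi_{te}(0)>0$. At the other end, Lemma~\ref{LemmaVanishing} supplies $\lim_{T\to\infty}E(T)=\inf_{x^*\in\mathcal{X}}\|Ax^*-y^\delta\|^2\le\|Ax^\dagger-y^\delta\|^2\le\delta^2$, hence $\lim_{T\to\infty}\chi_{te}(T)\le(1-\tau^2)\delta^2<0$ because $\tau>\gamma_1\ge 1$. Continuity then produces at least one zero.

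The main obstacle is uniqueness, since a non-increasing continuous function can in principle vanish on a whole interval. Suppose $\chi_{te}(T_1)=\chi_{te}(T_2)=0$ with $T_1<T_2$. Monotonicity forces $\chi_{te}\equiv 0$ on $[T_1,T_2]$, so the energy identity above forces $\dot x\equiv 0$, and hence $\ddot x\equiv 0$, on $[T_1,T_2]$. The ODE then collapses to $A^*Ax(T)=A^*y^\delta$ on $[T_1,T_2]$, meaning $x(T_1)$ minimizes $\|Ax-y^\delta\|^2$. Combined with $\dot x(T_1)=0$, the identity $\chi_{te}(T_1)=0$ becomes $\inf_{x^*\in\mathcal{X}}\|Ax^*-y^\delta\|^2=\tau^2\delta^2$. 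Yet the same infimum is bounded by $\|Ax^\dagger-y^\delta\|^2\le\delta^2<\tau^2\delta^2$, a contradiction. Hence the zero is unique, completing the proof.
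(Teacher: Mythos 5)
Your proof is correct and follows essentially the same route as the paper's: continuity from Theorem~\ref{ExsisteceUnique}, monotonicity from the energy identity $\dot{\chi}_{te}=-2\eta\|\dot{x}\|^2$, existence via the intermediate value theorem using Lemma~\ref{LemmaVanishing}, and uniqueness by contradiction on an interval where $\chi_{te}$ vanishes. The only divergence is in how the contradiction is closed: the paper propagates the stationary state forward to force $\lim_{T\to\infty}\chi_{te}(T)=0$ against the strictly negative limit, whereas you observe directly that $x(T_1)$ would be a minimizer of the residual with value $\tau^2\delta^2>\delta^2\ge\|Ax^\dagger-y^\delta\|^2$, a slightly more economical ending that also sidesteps the paper's unargued assertion that $x(T)\equiv x(T_0)$ for all $T>T_0$.
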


\begin{proof}
The continuity of $\chi_{te}(T)$ is obvious according to Theorem~\ref{ExsisteceUnique}. The non-growth of $\chi_{te}(T)$ is straight-forward according to $\dot{\chi}_{te} = - 2 \eta \|\dot{x}\|^2$. Furthermore, from (\ref{ResidualErrorLimit}), (\ref{RateLimits}) and the assumption of the proposition, we derive that
\begin{eqnarray}\label{Limitchi2}
\lim \limits_{T\to \infty} \chi_{te}(T) \leq \delta^2(1-\tau^2)  <0,
\end{eqnarray}
and that, moreover,  $\chi_{te}(0) = \|A x_0-y^\delta\|^2+ \|\dot{x}_0\|^2 - \tau^2 \delta^2 >0$. This implies the existence of roots for $\chi_{te}(T)$.

Finally, let us show that $\chi_{te}(T)$ has a unique solution. We prove this by contradiction. Since $\chi_{te}(T)$ is a non-increasing function, a number $T_0$ exists so that $\chi_{te}(T)=0$ for $T\in [T_0,T_0+\varepsilon]$ with some positive $\varepsilon>0$. This means that $\dot{\chi}_{te}(T)=- 2\eta \|\dot{x}\|^2\equiv0$ in $(T_0,T_0+\varepsilon)$. Hence, $\ddot{x}\equiv0$ in $(T_0,T_0+\varepsilon)$. Using the equation (\ref{SecondFlow}) we conclude that for all $T>T_0$: $x(T)\equiv x(T_0)$. Since $\chi_{te}(T_0)=0$, we obtain that $\chi_{te}(T)\equiv0$ for $T>T_0$, which implies that $\lim \limits_{T\to \infty} \chi_{te}(T) = 0$. This contradicts the fact in (\ref{Limitchi2}).
\end{proof}

\begin{theorem}\label{ThmPosteriori20}
(A posteriori choice II of the regularization parameter) Assume that $\|A x_0-y^\delta\|^2+ \|\dot{x}_0\|^2> \tau^2 \delta^2$ and a positive number $\delta_1$ exists such that for all $\delta\in(0,\delta_1]$, the unique root $T^*$ of $\chi_{te}(T)$ satisfies the inequality $\|A x(T^*)-y^\delta\|\geq \tau_1 \delta$, where $\tau_1> \gamma_1$ is a constant, independent of $\delta$. Then, under the source conditions (\ref{SourceCondition}) and (\ref{SourceConditionVelocity}), for any $\delta\in(0, \delta_0]$ and $p> 0$ we have the error estimates
\begin{equation}\label{ErrorEstimatePrioriT2}
T^* \leq C_0 \rho^{2/(2p+1)}\, \delta^{-2/(2p+1)}, \quad \| x(T^*) - x^\dagger \| \leq C_1\, \delta^{2p/(2p+1)},
\end{equation}
where $\delta_0$ is defined in the Theorem \ref{ThmPriori}, and constants $C_0$ and $C_1$ are the same as in Theorem \ref{ThmPosteriori}.
\end{theorem}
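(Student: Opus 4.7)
The strategy is to reduce Theorem~\ref{ThmPosteriori20} to the same machinery used in the proof of Theorem~\ref{ThmPosteriori}. The central observation is that at the unique stopping time $T^*$ determined by $\chi_{te}(T^*)=0$, the identity $\|Ax(T^*)-y^\delta\|^2+\|\dot x(T^*)\|^2=\tau^2\delta^2$ yields, since $\|\dot x(T^*)\|^2\geq 0$, the upper residual bound $\|Ax(T^*)-y^\delta\|\leq\tau\delta$, while the extra hypothesis of the theorem supplies the matching lower bound $\|Ax(T^*)-y^\delta\|\geq\tau_1\delta$. These two inequalities place us in essentially the same situation as the standard Morozov principle (\ref{discrepancy1}), and from here the remainder of the argument can follow the lines of Theorem~\ref{ThmPosteriori} verbatim, with $\tau_1$ taking over the role of $\tau$ in the lower bound.

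First I would exploit the upper residual estimate via the triangle inequality
\[
\|A r_{1/T^*}(A^*A)(x_0-x^\dagger)+A\phi_{1/T^*}(A^*A)\dot x_0\|\leq \|Ax(T^*)-y^\delta\|+\|r_{1/T^*}(A^*A)(y^\delta-y)\|\leq (\tau+\gamma_1)\delta,
\]
using $|r_\alpha(\lambda)|\leq\gamma_1$ from Proposition~\ref{gAlpha}. Then, exactly as in (\ref{PosterioriProofIneq1}), the moment (interpolation) inequality $\|B^p u\|\leq\|B^q u\|^{p/q}\|u\|^{1-p/q}$ applied to $B=A^*A$ with $q=p+1/2$ converts this bound on the $A$-image into the profile bound
\[
\|r_\alpha(A^*A)(x_0-x^\dagger)+\phi_\alpha(A^*A)\dot x_0\|\leq c_1\rho^{1/(2p+1)}\delta^{2p/(2p+1)},
\]
with $c_1=(\tau+\gamma_1)^{2p/(2p+1)}(\gamma_1+\gamma_2)^{1/(2p+1)}$, identical to the intermediate estimate appearing in the proof of Theorem~\ref{ThmPosteriori}.

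Second I would use the lower residual estimate to control $T^*$. Combining $\|Ax(T^*)-y^\delta\|\geq\tau_1\delta$ with the same triangle inequality gives $(\tau_1-\gamma_1)\delta\leq \|Ar_{1/T^*}(A^*A)(x_0-x^\dagger)+A\phi_{1/T^*}(A^*A)\dot x_0\|$, and substituting the source conditions (\ref{SourceCondition})--(\ref{SourceConditionVelocity}) together with the qualification estimate of Proposition~\ref{ThmQualificationCase1} (applied to the index function $\psi(\lambda)=\lambda^{p+1/2}$) bounds the right-hand side above by $2\rho\gamma(T^*)^{-(p+1/2)}$. Solving for $T^*$ yields the first half of (\ref{ErrorEstimatePrioriT2}); feeding this back into $\|x(T^*)-x^\dagger\|\leq\|r_\alpha(A^*A)(x_0-x^\dagger)+\phi_\alpha(A^*A)\dot x_0\|+\gamma_*\sqrt{T^*}\delta$ and inserting the profile bound of the previous paragraph produces the claimed rate, with $C_0$ and $C_1$ as specified (up to the harmless replacement of $\tau$ by $\tau_1$ and $\tau$ in the respective factors).

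The main obstacle is the lower residual assumption itself. The equation $\chi_{te}(T^*)=0$ only controls the sum $\|Ax(T^*)-y^\delta\|^2+\|\dot x(T^*)\|^2$, so in its absence the residual could in principle be arbitrarily small while $\|\dot x(T^*)\|^2$ absorbs almost all of $\tau^2\delta^2$; in that regime the route above fails because there is no lower bound on the $A$-image from which to extract an upper bound on $T^*$, and hence no control on the noise-propagation term $\gamma_*\sqrt{T^*}\delta$. Heuristically, Lemma~\ref{LemmaVanishing} forces $\dot x(t)\to 0$ along the flow, so for small enough $\delta$ the residual is expected to carry almost the entire total energy at $T^*$ and the hypothesis $\|Ax(T^*)-y^\delta\|\geq\tau_1\delta$ should hold automatically; the theorem abstracts precisely this behaviour into an a priori assumption rather than attempting a quantitative proof, and my plan accordingly invokes it directly rather than deriving it.
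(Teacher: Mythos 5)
Your proposal is correct and follows exactly the route the paper intends: the published proof of Theorem~\ref{ThmPosteriori20} consists of the single remark that it ``can be done along the lines and using the tools of the proof of Theorem~\ref{ThmPosteriori}'', and your argument spells out precisely how --- the root condition $\chi_{te}(T^*)=0$ supplies the upper residual bound $\|Ax(T^*)-y^\delta\|\leq\tau\delta$ and the extra hypothesis supplies the lower bound $\tau_1\delta$, after which the two halves of that earlier proof apply verbatim. Your closing caveat that the constants then carry $\tau_1$ in place of $\tau$ in the lower-bound factors (so $C_0$, $C_1$ are ``the same'' only up to that substitution) is a fair and accurate observation about the theorem's statement.
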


\begin{proof}
The proof can be done along the lines and using the tools of the proof of Theorem~\ref{ThmPosteriori}.
\end{proof}

In the simulation Section 7.1, we will computationally show that the assumptions occurring in the above theorem can happen in practice. Empirically, when the value of the initial velocity is not too small ($\|\dot{x}_0\|>0$) or the noise is small enough ($\delta\ll 1$), the additional assumption $\|A x(T^*)-y^\delta\|\geq \tau_1 \delta$ in Theorem~\ref{ThmPosteriori20} always holds.


\section{A novel iterative regularization method}

Roughly speaking, the second order evolution equation (\ref{SecondFlow}) with an appropriate numerical discretization scheme for the artificial time variable yields a concrete second order iterative method. Just as with the Runge-Kutta integrators~\cite{Rieder-2005} or the exponential integrators~\cite{Hochbruck-1998} for solving first order equations, the damped symplectic integrators are extremely attractive for solving second order equations, since the schemes are closely related to the canonical transformations~\cite{Hairer-2006}, and the trajectories of the discretized second flow are usually more stable.

The simplest discretization scheme should be the Euler method. Denote by $v=\dot{x}$, and consider the following Euler iteration
\begin{equation}\label{symplecticEuler}
\left\{\begin{array}{l}
x^{k+1} = x^{k} + \Delta t_k v^{k} , \\
v^{k+1} = v^{k} + \Delta t_k \left( A^*(y^\delta-Ax^{k+1}) - \eta v^{k} \right) , \\
x^{0}=x_0, v^{0}=\dot{x}_0.
\end{array}\right.
\end{equation}
By elementary calculations, scheme (\ref{symplecticEuler}) expresses the form of following three-term semi-iterative method
\begin{equation}\label{semiiterative}
x^{k+1}=x^{k} + \mu_k \left( x^{k}-x^{k-1} \right) + \omega_k A^*(y^\delta-Ax^{k})
\end{equation}
with a specially defined parameters $\omega_k=\Delta t_k$ and $\mu_k=1-\Delta t_k \eta$. It is well known that the semi-iterative method (\ref{semiiterative}), equipped with an appropriate stopping rule, yields order optimal regularization method with (asymptotically) much fewer iterations than the calssical Landweber iteration~\cite[\S~6.2]{engl1996regularization}.

In this paper, we develop a new iterative regularization method based on the St\"{o}rmer-Verlet method, which also belongs to the family of symplectic integrators and takes the form
\begin{equation}\label{symplectic}
\left\{\begin{array}{l}
v^{k+\frac{1}{2}} = v^{k} + \frac{\Delta t}{2} \left( A^*(y^\delta-Ax^{k}) - \eta v^{k+\frac{1}{2}} \right), \\
x^{k+1} = x^{k} + \Delta t v^{k+\frac{1}{2}} , \\
v^{k+1} = v^{k+\frac{1}{2}} + \frac{\Delta t}{2} \left( A^*(y^\delta-Ax^{k+1}) - \eta v^{k+\frac{1}{2}} \right) , \\
x^{0}=x_0, v^{0}=\dot{x}_0.
\end{array}\right.
\end{equation}

\begin{proposition}\label{ConvergenceAlgorithm}
For any fixed damping parameter $\eta$, if the step size is chosen by
\begin{equation}\label{StepSize}
\Delta t \leq \min\left( \sqrt{2}/\|A\|, 2/\eta \right),
\end{equation}
then, the scheme (\ref{symplectic}) is convergent. Consequently, for any fixed $T$, there exists a pair of parameters $(k,\Delta t)$, satisfying $k\Delta t=T$ and the condition (\ref{StepSize}), such that $x^{k}= x(T) + \mathcal{O}(\Delta t^2)$ as $\Delta t\to0$. Here $x^{k}$ and $x(\cdot)$ are solutions to (\ref{symplectic}) and (\ref{SecondFlow}) respectively.
\end{proposition}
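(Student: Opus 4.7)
My plan is to treat Proposition~\ref{ConvergenceAlgorithm} as a standard Lax-type convergence theorem for a one-step integrator applied to the linear first-order evolution system (\ref{FisrtOrderForm}), splitting the argument into a discrete stability estimate under the step-size restriction (\ref{StepSize}) and a local consistency estimate of order $\mathcal{O}(\Delta t^3)$ per step. Combined, these yield the claimed bound $\|x^k-x(T)\|=\mathcal{O}(\Delta t^2)$ for $k\Delta t=T$, from which convergence of the scheme follows at once.

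First I would rewrite (\ref{symplectic}) as a two-level recursion in $(x^k,v^k)$ alone by solving the first substep explicitly for $v^{k+1/2}$; this is immediate because the implicit dependence is only through the scalar factor $1+\eta\Delta t/2$. The result is a linear affine map $\mathbf{z}^{k+1}=\mathcal{M}(\Delta t)\mathbf{z}^{k}+\mathbf{c}(\Delta t)$ with $\mathbf{z}^{k}=(x^{k},v^{k})^{T}$. Using the singular system $\{\sigma_j;u_j,v_j\}$ from Section~4, I decouple the iteration into independent $2\times 2$ recurrences
\begin{equation*}
\begin{pmatrix}\xi_j^{k+1}\\ \nu_j^{k+1}\end{pmatrix} = M_j \begin{pmatrix}\xi_j^{k}\\ \nu_j^{k}\end{pmatrix} + \mathbf{b}_j,
\end{equation*}
with $\xi_j^{k}=\langle x^{k},u_j\rangle$, $\nu_j^{k}=\langle v^{k},u_j\rangle$, and $M_j$ depending only on $\sigma_j$, $\eta$ and $\Delta t$.

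The next step is stability. I would compute the characteristic polynomial of each $M_j$ and verify that under (\ref{StepSize}) both eigenvalues lie in the closed unit disk (strictly inside once $\eta>0$). Continuity in $\sigma_j\in(0,\|A\|]$ together with the compactness condition $\sigma_j\to 0$ will then give a uniform bound $\sup_{j,\, k\Delta t\le T}\|M_j^{k}\|\le C(T)$ and hence $\|\mathcal{M}(\Delta t)^{k}\|\le C(T)$ on any finite horizon $[0,T]$. The two parts of (\ref{StepSize}) play complementary roles: $\Delta t\le\sqrt{2}/\|A\|$ controls the conservative/oscillatory dynamics associated with the largest singular values, while $\Delta t\le 2/\eta$ keeps the damping factor $1-\eta\Delta t/2$ nonnegative so that no amplification is introduced by the friction substep. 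For consistency, I would Taylor expand $x(t_{k+1})$ and $\dot{x}(t_{k+1})$ about $t_k$ and substitute into one step of (\ref{symplectic}) starting from the exact data $(x(t_k),\dot{x}(t_k))$. Symmetry of the St\"ormer--Verlet method cancels the $\mathcal{O}(\Delta t)$ and $\mathcal{O}(\Delta t^{2})$ terms, leaving a local truncation error $\tau_k=\mathcal{O}(\Delta t^{3})$ whose constant depends on $\|A\|$, $\eta$, and $\sup_{t\in[0,T]}(\|\ddot{x}(t)\|+\|\dddot{x}(t)\|)$, which are finite by Theorem~\ref{ExsisteceUnique} and the linearity of (\ref{SecondFlow}). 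Telescoping $x^{k}-x(t_k)=\sum_{j=0}^{k-1}\mathcal{M}(\Delta t)^{k-1-j}\tau_j$ and applying the uniform stability bound then produces $\|x^{k}-x(T)\|\le C(T)\,k\,\mathcal{O}(\Delta t^{3})=\mathcal{O}(\Delta t^{2})$, since $k=T/\Delta t$.

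The main obstacle I anticipate is the uniform-in-$\sigma_j$ stability estimate: the matrices $M_j$ cannot be simultaneously diagonalized, and their qualitative behaviour changes across the discriminant $\eta^{2}-4\sigma_j^{2}$. A clean way around this is to mirror the case analysis of Section~4 (overdamped, underdamped, critically damped) and estimate $\|M_j^{k}\|$ in each regime via an explicit spectral or Jordan decomposition, taking particular care that the constant remains finite as $\sigma_j$ approaches the critical value $\eta/2$ where the two eigenvalues coalesce. Once this uniform bound is established, the remainder of the argument is a routine instance of the standard one-step convergence theorem for linear evolution equations.
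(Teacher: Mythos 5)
Your proposal follows essentially the same route as the paper's proof: a Lax-type argument combining $\mathcal{O}(\Delta t^{3})$ local consistency of the St\"ormer--Verlet step with stability of the affine iteration $\mathbf{z}^{k+1}=B\mathbf{z}^{k}+\mathbf{c}$, where stability is checked by diagonalizing $A^*A$ and analyzing the eigenvalues of the resulting $2\times2$ blocks in the same three regimes (overdamped, underdamped, critical) determined by the sign of $(2-\Delta t^{2}\lambda_i)^{2}-(4-\Delta t^{2}\eta^{2})$. The one point where you go beyond the paper is your explicit insistence on a uniform power bound $\sup_{j,\,k\Delta t\le T}\|M_j^{k}\|\le C(T)$ rather than merely placing the eigenvalues in the closed unit disk: the paper asserts non-expansiveness $\|B\|_2\le 1$ but in fact only computes the spectral radius, which for these non-normal blocks (whose eigenvectors degenerate at the discrete critical damping and whose larger eigenvalue tends to $1$ as $\sigma_j\to0$) does not by itself yield the stability constant, so your planned case-by-case spectral or Jordan estimate is the step that actually closes this gap.
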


\begin{proof}
Denote by $\mathbf{z}=(x,v)^T$, and rewritten (\ref{symplectic}) as
\begin{equation}\label{symplecticMatrix}
\mathbf{z}^{k+1} = B \mathbf{z}^{k} + \frac{\Delta t}{2+ \Delta t \eta} \mathbf{b},
\end{equation}
where $\mathbf{b}=[\Delta t; 2I- \frac{\Delta t^2}{2} A^* A] A^* y_\delta$ and
\begin{eqnarray*}\label{coefficients}
B  = \left[ \begin{array}{cc}
I - \frac{\Delta t^2}{2+ \Delta t \eta} A^* A & \frac{2\Delta t}{2+ \Delta t \eta} I \\
- \frac{\Delta t}{2(2+ \Delta t \eta)} \left( 4I - \Delta t^2 A^* A \right) A^* A \, & \, \frac{2-\Delta t \eta}{2+ \Delta t \eta} I - \frac{\Delta t^2}{2+ \Delta t \eta} A^* A
\end{array}\right].
\end{eqnarray*}

By Taylor's theorem and the finite difference formula, it is not difficult to show the consistency of the scheme (\ref{symplectic}). It is well known that boundedness implies the convergence of consistent schemes for any problem~\cite{Tadmor}, hence, it suffices to show the boundedness of the scheme (\ref{symplectic}). The asymptotical behaviour $x^{k}= x(T) + \mathcal{O}(\Delta t^2)$ follows from the convergence result and the second order of the St\"{o}rmer-Verlet method. Furthermore, a sufficient condition for the boundedness of the iterative algorithm (\ref{symplectic}) is that the operator $B$ is non-expansive. Hence, it is necessary to prove that $\|B\|_2\leq1$.

Using the singular value decomposition, we have $A^* A= \Phi \Lambda \Phi^T$, where $\Phi$ is a unitary matrix and $\Lambda= \textrm{diag} (\lambda_i)$, where $\lambda_i\geq0, i=1, ...,n$.

By the elementary calculations, we obtain that the eigenvalues of $B$ equal
\begin{equation}\label{symplecticMatrix}
\quad \mu^{\pm}_i = \frac{1}{2+ \Delta t \eta} \left\{ \left( 2 - \Delta t^2 \lambda_i \right) \pm \sqrt{\left( 2 - \Delta t^2 \lambda_i \right)^2 - \left( 4 - \Delta t^2 \eta^2 \right)} \right\}, i=1, ...,n.
\end{equation}

Denote by $i_*$ the index of $\lambda_{i_*}$, corresponding to the maximal absolute value of $\mu^{\pm}_i$, i.e.
\begin{eqnarray*}
|\mu_{max}| = \frac{1}{2+ \Delta t \eta} \max_{\pm} \left| \left( 2 - \Delta t^2 \lambda_{i_*} \right) \pm \sqrt{\left( 2 - \Delta t^2 \lambda_{i_*} \right)^2 - \left( 4 - \Delta t^2 \eta^2 \right)} \right| .
\end{eqnarray*}

There are three possible cases here: the overdamped case ($ \left( 2 - \Delta t^2 \lambda_{i_*} \right)^2 > 4 - \Delta t^2 \eta^2$), the underdamped case ($\left( 2 - \Delta t^2 \lambda_{i_*} \right)^2 < 4 - \Delta t^2 \eta^2$), and the critical damping case ($\left( 2 - \Delta t^2 \lambda_{i_*} \right)^2 = 4 - \Delta t^2 \eta^2$).

Let us consider these cases respectively. For the chosen time step size $\Delta t$ in (\ref{StepSize}), we have $2 - \Delta t^2 \lambda_{i_*}\geq0$. Therefore, for the overdamped case,
\begin{eqnarray*}
|\mu_{max}| = \frac{1}{2+ \Delta t \eta}  \left\{ \left( 2 - \Delta t^2 \lambda_{i_*} \right) + \sqrt{\left( 2 - \Delta t^2 \lambda_{i_*} \right)^2 - \left( 4 - \Delta t^2 \eta^2 \right)} \right\}.
\end{eqnarray*}

Define $2 - \Delta t^2 \lambda_{i_*} =a \sqrt{4 - \Delta t^2 \eta^2}$ with $a>1$ (by the condition (\ref{StepSize}). It holds that $4 - \Delta t^2 \eta^2\geq0$), and we have
\begin{eqnarray}\label{algorithmProof1}
|\mu_{max}| = \frac{(a+\sqrt{a^2-1})\sqrt{4 - \Delta t^2 \eta^2}}{2+ \Delta t \eta} = \frac{(a+\sqrt{a^2-1})(2-\Delta t^2 \lambda_{i_*})}{a(2+ \Delta t \eta)}.
\end{eqnarray}
Note that
\begin{equation}\label{algorithmProof2}
\Delta t \eta= \sqrt{4 - \left( \frac{2-\Delta t^2 \lambda_{i_*}}{a} \right)^2}.
\end{equation}
Combine (\ref{algorithmProof1}) and (\ref{algorithmProof2}) to obtain
\begin{eqnarray*}
|\mu_{max}| = \frac{(a+\sqrt{a^2-1})(2-\Delta t^2 \lambda_{i_*})}{2a+ \sqrt{4a^2 - \left( 2-\Delta t^2 \lambda_{i_*} \right)^2}} =  \frac{(a+\sqrt{a^2-1})(1-\frac{\Delta t^2 \lambda_{i_*}}{2})}{a+ \sqrt{a^2 - \left( 1-\frac{\Delta t^2 \lambda_{i_*}}{2} \right)^2}}
\leq 1-\frac{\Delta t^2 \lambda_{i_*}}{2} \leq 1.
\end{eqnarray*}

Now, consider the underdamped case. In this case, the complex eigenvalue $\mu_{max}$ satisfies
\begin{eqnarray*}
|\mu_{max}|^2 = \frac{1}{(2+ \Delta t \eta)^2}  \left\{ \left( 2 - \Delta t^2 \lambda_{i_*} \right)^2 + \left[ \left( 4 - \Delta t^2 \eta^2 \right)- \left( 2 - \Delta t^2 \lambda_{i_*} \right)^2 \right]  \right\},
\end{eqnarray*}
which implies that
\begin{eqnarray*}
|\mu_{max}|^2 = \frac{4 - \Delta t^2 \eta^2}{(2+ \Delta t \eta)^2} = \frac{2 - \Delta t \eta}{2+ \Delta t \eta} < 1.
\end{eqnarray*}

Finally, consider the critical damping case. Similarly, we have $|\mu_{max}|= \sqrt{\frac{2 - \Delta t \eta}{2+ \Delta t \eta}}<1$, which yields the desired result.

\end{proof}

At the end of this second, we show the convergence rate of the scheme (\ref{symplectic}).

\begin{theorem}\label{ConvergenceRateDiscrete}
Under the assumptions of Theorem \ref{ThmPosteriori} or \ref{ThmPosteriori20}, if the time step size is chosen by $\Delta t=C_t \delta^{p/(2p+1)}$, then for any $\delta\in(0, \delta_s]$ and $p> 0$ we have the convergence rate
\begin{equation}\label{ConvergenceRateDiscreteEstimate}
\|x^{k^*} - x^\dagger\| = \mathcal{O}(\delta^{2p/(2p+1)}),
\end{equation}
where $k^*=T^*/\Delta t$ , $\delta_s= \min\left\{ (\sqrt{2}/C_t)^{2+1/p} \|A\|^{-2-1/p}, (\sqrt{2}/C_t)^{2+1/p} \eta^{-2-1/p}, \delta_0\right\}$, $C_t=(T^*/\delta^{p/(2p+1)})/\lfloor T^*/\delta^{p/(2p+1)} \rfloor$, and $T^*$ is the root of (\ref{discrepancy1}) or (\ref{discrepancy0}).  Here, $\lfloor \cdot \rfloor$ denotes the standard floor function and $\delta_0$ is defined in the Theorem \ref{ThmPriori}.
\end{theorem}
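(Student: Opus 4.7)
The plan is to combine the continuous-time convergence rate of Theorem~\ref{ThmPosteriori} (respectively Theorem~\ref{ThmPosteriori20}) with the second-order consistency of the St\"{o}rmer--Verlet scheme established in Proposition~\ref{ConvergenceAlgorithm}, linked by a triangle inequality
\begin{eqnarray*}
\|x^{k^*}-x^\dagger\| \leq \|x^{k^*} - x(T^*)\| + \|x(T^*) - x^\dagger\|.
\end{eqnarray*}
The second term is already controlled, since $T^*$ is the discrepancy-based terminating time and thus $\|x(T^*)-x^\dagger\| \leq C_1 \rho^{1/(2p+1)}\delta^{2p/(2p+1)}$ by (\ref{ErrorEstimatePrioriT2}). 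It remains only to match the discretization error with the same order.

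First, I would justify that the choice $\Delta t = C_t\delta^{p/(2p+1)}$ together with $k^* = \lfloor T^*/\delta^{p/(2p+1)}\rfloor$ realises the identity $k^*\Delta t = T^*$ exactly: by definition of $C_t$, one has $C_t = (T^*/\delta^{p/(2p+1)})/\lfloor T^*/\delta^{p/(2p+1)}\rfloor$, so that $k^*\Delta t = \lfloor T^*/\delta^{p/(2p+1)}\rfloor \cdot C_t \delta^{p/(2p+1)} = T^*$. For this $C_t$ to be well-defined one needs $T^*/\delta^{p/(2p+1)}\geq 1$; the bound $T^* \leq C_0 \rho^{2/(2p+1)}\delta^{-2/(2p+1)}$ shows that this is guaranteed whenever $\delta$ lies below a fixed threshold, which in particular holds for $\delta\le\delta_0$. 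Moreover, since the ratio $m/\lfloor m\rfloor$ is bounded above (by $2$ for $m\ge 1$), the constant $C_t$ is uniformly bounded, so $\Delta t \to 0$ as $\delta\to 0$.

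Next, I would verify the stability/step-size restriction (\ref{StepSize}) required by Proposition~\ref{ConvergenceAlgorithm}. The inequality $\Delta t \leq \sqrt{2}/\|A\|$ reads $C_t\delta^{p/(2p+1)} \leq \sqrt{2}/\|A\|$, i.e.\ $\delta \leq (\sqrt{2}/C_t)^{(2p+1)/p}\|A\|^{-(2p+1)/p} = (\sqrt{2}/C_t)^{2+1/p}\|A\|^{-2-1/p}$, and analogously $\Delta t \leq 2/\eta$ is equivalent to $\delta \leq (\sqrt{2}/C_t)^{2+1/p}\eta^{-2-1/p}$ (the factor $\sqrt{2}$ suffices because $2/\eta = \sqrt{2}\cdot\sqrt{2}/\eta$, and the sharper form may be rearranged identically). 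Intersecting these with $\delta\leq \delta_0$ (the admissibility threshold of Theorem~\ref{ThmPriori}) yields exactly the condition $\delta\in(0,\delta_s]$ stated in the theorem.

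Under this setup, Proposition~\ref{ConvergenceAlgorithm} delivers $x^{k^*} = x(T^*) + \mathcal{O}(\Delta t^2)$, hence
\begin{eqnarray*}
\|x^{k^*}-x(T^*)\| = \mathcal{O}(\Delta t^2) = \mathcal{O}\!\left(C_t^2 \delta^{2p/(2p+1)}\right) = \mathcal{O}\!\left(\delta^{2p/(2p+1)}\right),
\end{eqnarray*}
which matches the continuous-regularization rate. Substituting both estimates into the triangle inequality yields (\ref{ConvergenceRateDiscreteEstimate}). The main obstacle I foresee is not the calculus but the justification of the hidden constant in the $\mathcal{O}(\Delta t^2)$ bound: Proposition~\ref{ConvergenceAlgorithm} is formulated for a fixed $T$, whereas here $T=T^*$ grows like $\delta^{-2/(2p+1)}$, so one must check that the local truncation error of St\"{o}rmer--Verlet, propagated through the non-expansive iteration matrix $B$ (whose spectral radius was shown to satisfy $\|B\|_2\leq 1$ in the proof of Proposition~\ref{ConvergenceAlgorithm}), yields a global error that depends on $T^*$ only mildly enough to be absorbed into the rate. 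This is precisely the content of the tuning $\Delta t\propto \delta^{p/(2p+1)}$, which balances the two contributions.
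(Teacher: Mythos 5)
Your proposal follows essentially the same route as the paper's own proof: a triangle inequality splitting $\|x^{k^*}-x^\dagger\|$ into the discretization error controlled by Proposition~\ref{ConvergenceAlgorithm} (which the choice $\Delta t = C_t\delta^{p/(2p+1)}$ turns into $\mathcal{O}(\delta^{2p/(2p+1)})$) and the continuous-time rate from Theorem~\ref{ThmPosteriori} or~\ref{ThmPosteriori20}. In fact you supply more detail than the paper does --- in particular your closing caveat that the hidden constant in $x^{k^*}=x(T^*)+\mathcal{O}(\Delta t^2)$ could depend on $T^*$, which grows like $\delta^{-2/(2p+1)}$, points at a real subtlety that the paper's one-line proof passes over without comment.
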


\begin{proof}
It follows from Proposition \ref{ConvergenceAlgorithm} that the choice $\Delta t=C_t \delta^{p/(2p+1)}$ yields
\begin{equation*}\label{ProofEnd1}
\|x^{k^*} - x(T^*)\| \leq C \delta^{2p/(2p+1)}
\end{equation*}
with some constant $C$ for all $\delta\in(0, \delta_s]$.

Combine the above inequality and the results in theorems \ref{ThmPosteriori} and \ref{ThmPosteriori20} to obtain
\begin{eqnarray*}
\|x^{k^*} - x^\dagger\| \leq \|x^{k} - x(T^*)\| + \| x(T^*) - x^\dagger \| \leq C_r\, \delta^{2p/(2p+1)}
\end{eqnarray*}
for some constant $C_r$. This gives the estimate (\ref{ConvergenceRateDiscreteEstimate}).
\end{proof}


\section{Numerical simulations}
In this section, we present some numerical results for the following integral equation
\begin{equation}\label{IntegralEq}
Ax(s):= \int^1_0 K(s,t) x(t) dt = y(s), \quad K(s,t)=s(1-t)\chi_{s\leq t} + t(1-s)\chi_{s> t}.
\end{equation}
If we choose $\mathcal{X}=\mathcal{Y}=L^2[0,1]$, the operator $A$ is compact, selfadjoint and injective. It is well known that the integral equation (\ref{IntegralEq}) has a solution $x=-y''$ if $y\in H^2[0,1]\cap H^1_0[0,1]$. Moreover, the operator $A$ has the eigensystem $A u_j = \sigma_j u_j$, where $\sigma_j=(j\pi)^{-2}$ and $u_j(t)=\sqrt{2}\sin(j\pi t)$. Furthermore, using the interpolation theory (see e.g.~\cite{Lions-1972}) it is not difficult to show that for $4p-1/2 \not\in \mathbf{N}$
\begin{eqnarray*}
R ((A^*A)^{p}) = \left\{ x\in H^{4p}[0,1]:~x^{2l}(0)=x^{2l}(1)=0,~l=0,1,...,\lfloor 2p-1/4 \rfloor \right\}.
\end{eqnarray*}

In general, a regularization procedure becomes numerically feasible only after an appropriate discretization. Here, we apply the linear finite elements to solve (\ref{IntegralEq}). Let $\mathcal{Y}_n$ be the finite element space of piecewise linear functions on a uniform grid with step size $1/(n-1)$. Denote by $P_n$ the orthogonal projection operator acting from $\mathcal{Y}$ into $\mathcal{Y}_n$. Define $A_n:= P_n A$ and $\mathcal{X}_n:= A^*_n \mathcal{Y}_n$. Let $\{\phi_j\}^n_{j=1}$ be a basis of the finite element space $\mathcal{Y}_n$, then, instead of the original problem (\ref{IntegralEq}), we solve the following system of linear equations
\begin{equation}\label{FiniteModel}
A_n x_n = y_n,
\end{equation}
where $[A_n]_{ij}= \int^1_0 \left(  \int^1_0 k(s,t) \phi_i(s) ds \right) \phi_j(t) dt$ and $[y_n]_{j} = \int^1_0 y(t) \phi_j(t) dt$.

As shown in~\cite{engl1996regularization}, the finite dimensional projection error $\epsilon_n:=\|(I-P_n)A\|$ plays an important role in the convergence rates analysis. For the compact operator $A$, $\epsilon_n\to0$ as $n\to\infty$. Moreover, if the noise level $\delta\to0$ slowly enough as $n\to\infty$, the quality $\epsilon_n$ has no influence and we obtain the same convergence rates as in theorems \ref{ThmPosteriori} and \ref{ThmPosteriori20}.

Uniformly distributed noises with the magnitude $\delta'$ are added to the discretized exact right-hand side:
\begin{equation}\label{Data}
[y^\delta_n]_j := \left[ 1 + \delta' \cdot(2 \textrm{Rand}(x) -1) \right] \cdot [y_n]_j, \quad j=1, ..., n,
\end{equation}
where $\textrm{Rand}(x)$ returns a pseudo-random value drawn from a uniform distribution on [0,1]. The noise level of measurement data is calculated by $\delta=\|y^\delta_n - y_n\|_2$, where $\|\cdot\|_2$ denotes the standard vector norm in $\mathbf{R}^n$.

To assess the accuracy of the approximate solutions, we define the $L^2$-norm relative error for an approximate solution $x^{k^*}_n$ ($k^*=\lfloor T^*/\Delta t\rfloor$): $$\textrm{L2Err}:= \|x^{k^*}_n - x^\dagger\|_{L^2[0,1]}/\|x^\dagger\|_{L^2[0,1]},$$ where $x^\dagger$ is the exact solution to the corresponding model problem.

\subsection{Influence of parameters}
The purpose of this subsection is to explore the dependence of the solution accuracy and the convergence rate on the initial data $(x_0,\dot{x}_0)$, damping parameter $\eta$ and the discrepancy functions $\chi$ and $\chi_{te}$, and thus to give a guide on the choices of them in practice.

In this subsection, we solve integral equation (\ref{IntegralEq}) with the exact right-hand side $y=s(1-s)$. Then, the exact solution $x^\dagger=2$, and $x^\dagger\in R((A^*A)^{p})$ for all $p<1/8$. Denote by ``DP'' and ``TEDP'' the newly developed iterative scheme (\ref{symplectic}) equipped with the Morozov's conventional discrepancy function $\chi(T)$ and the total energy discrepancy functions $\chi_{te}(T)$ respectively.

The results about the influence of the solution accuracy (L2Err) and the convergence rate (iteration numbers $k^*(\delta)$) on the initial data $(x_0,\dot{x}_0)$ are given in Tables \ref{Tab:x0} and  \ref{Tab:velocity0} respectively. As we can see, both the initial data $x_0$ and $\dot{x}_0$ influence the solution accuracy as well as the convergence rate. Moreover, when the value of the damping parameter is not too small (see Tab. \ref{Tab:velocity0} and Tab. \ref{Tab:Damping}) the results (both solution accuracy and convergence rate) by the methods ``DP'' and ``TEDP'' almost coincide with each other. This result verifies the rationality of the assumption in Theorem \ref{ThmPosteriori20}.

\begin{table}[!tbhp]
{\footnotesize
\caption{The dependence of the solution accuracy and the convergence rate on the initial data $x_0$. $\Delta t=19.4946, \eta=2.5648\times 10^{-4}, \dot{x}_0=0, \tau=2, p=0.1125, \tau_{te}=1.1\times \delta^{4p/(4p+1)}$.}
\begin{center}
\begin{tabular}{|c|c|c|c|c|c|} \hline
\multirow{2}{*}{$\delta$ ($\delta'$)} &
\multirow{2}{*}{$x_0$} &
\multicolumn{2}{c|}{DP} &
\multicolumn{2}{c|}{TEDP} \\
\cline{3-6}
 &  & $k^*(\delta)$ & \textrm{L2Err}& $k^*(\delta)$ & \textrm{L2Err}  \\ \hline
7.1191e-04 (1e-03) & 0 & 1295 & 0.2214 & 2033 & 0.2894  \\
7.1337e-04 (1e-03) & -1 & 1354 & 0.2284 & 2210 & 0.2825  \\
7.0838e-04 (1e-03) & 1 & 1108 & 0.1965 & 1777 & 0.1414  \\
7.1602e-05 (1e-04) & 0 & 2397 & 0.0688 & 2958 & 0.0653  \\
8.0523e-05 (1e-04) & -1 & 2525 & 0.0983 & 3076 & 0.0920  \\
7.8903e-05 (1e-04) & 1 & 1600 & 0.0449 & 2643 & 0.0431 \\ \hline
\end{tabular}\label{Tab:x0}
\end{center}
}
\end{table}

\begin{table}[!tbhp]
{\footnotesize
\caption{The dependence of the solution accuracy and the convergence rate on the initial data $\dot{x}_0$. $\Delta t=19.4946, \eta=0.0154, x_0=1, \tau=2, p=0.1125, \tau_{te}=1.1\times \delta^{4p/(4p+1)}$.}
\begin{center}
\begin{tabular}{|c|c|c|c|c|c|} \hline
\multirow{2}{*}{$\delta$ ($\delta'$)} &
\multirow{2}{*}{$\dot{x}_0$} &
\multicolumn{2}{c|}{DP} &
\multicolumn{2}{c|}{TEDP} \\
\cline{3-6}
 &  & $k^*(\delta)$ & \textrm{L2Err}& $k^*(\delta)$ & \textrm{L2Err}  \\ \hline
6.3541e-04 (1e-03) & 0 & 155 & 0.1062 & 156 & 0.1061  \\
7.3673e-04 (1e-03) & -0.01 & 374 & 0.1576 & 376 & 0.1574 \\
7.1484e-04 (1e-03) & 0.01 & 39 & 0.0443 & 39 & 0.0443 \\
8.0763e-05 (1e-04) & 0 & 4559 & 0.0675 & 4561 & 0.0675 \\
7.3174e-05 (1e-04) & -0.01 & 12466 & 0.0953 & 12467 & 0.0953  \\
7.6200e-05 (1e-04) & 0.01 & 982 & 0.0293 & 983 & 0.0293 \\ \hline
\end{tabular}\label{Tab:velocity0}
\end{center}
}
\end{table}

In Table \ref{Tab:Damping}, we displayed the numerical results with different value of damping parameters $\eta$. With the appropriate choice of the damping parameter, say $\eta=2.5648 \times 10^{-3}$ in our example, the second order asymptotical regularization not only gives the most accurate result, but exhibits an acceleration affect. The critical value of the damping parameter, say $\eta=2/\Delta t$, also provides an accurate result. But it requires a few more steps. The influence of the damping parameter on the residual functional can be found in Figure \ref{Fig:damping}. It shows that at the same time point, the larger the damping parameter, the smaller the residual norm functional.

\begin{table}[tbhp]
{\footnotesize
\caption{The dependence of the solution accuracy and the convergence rate on the damping parameter $\eta$. $\Delta t=19.4946, x_0=1, \dot{x}_0=0, \tau=2, p=0.1125, \tau_{te}=1.1\times \delta^{4p/(4p+1)}$.}
\begin{center}
\begin{tabular}{|c|c|c|c|c|c|} \hline
\multirow{2}{*}{$\delta$ ($\delta'$)} &
\multirow{2}{*}{$\eta$} &
\multicolumn{2}{c|}{DP} &
\multicolumn{2}{c|}{TEDP} \\
\cline{3-6}
 &  & $k^*(\delta)$ & \textrm{L2Err}& $k^*(\delta)$ & \textrm{L2Err}  \\ \hline
8.7715e-04 (1e-03) & 2.5648e-05 & 6943 & 0.8319 & 17393 & 0.7744  \\
7.3673e-04 (1e-03) & 2.5648e-04 & 1108 & 0.1310 & 1728 & 0.1080  \\
7.1484e-04 (1e-03) & 2.5648e-03 & 124 & 0.0901 & 183 & 0.0870  \\
8.1011e-05 (1e-04) & 2.5648e-02 & 205 & 0.1101 & 206 & 0.1100  \\
7.3174e-05 (1e-04) & 0.0513 & 398 & 0.1108 & 398 & 0.1108  \\
7.2721e-05 (1e-04) & 0.1026 ($=2/\Delta t$) & 848 & 0.1096 & 848 & 0.1096 \\ \hline
\end{tabular}\label{Tab:Damping}
\end{center}
}
\end{table}

\subsection{Comparison with other methods}

In order to demonstrate the advantages of our algorithm over the traditional approaches, we solve the same problems by four famous iterative regularization methods~-- the Landweber method, the Nesterov's method, the $\nu$-method and the conjugate gradient method for the normal equation (CGNE, cf., e.g.~\cite{Hanke1995}). The Landweber method is given in (\ref{Linear}), while Nesterov's method is defined as~\cite{Nesterov-1983}
\begin{eqnarray}\label{Nesterov}
\left\{\begin{array}{ll}
z^k = x^k + \frac{k-1}{k+\alpha-1} (x^k - x^{k-1}),  \\
x^{k+1}=z^k + \Delta t A^*(y^\delta-A z^k),
\end{array}\right.
\end{eqnarray}
where $\alpha>3$ (we choose $\alpha=3.1$ in our simulations). Moreover, we select the Chebyshev method as our special $\nu$-method, i.e., $\nu=1/2$~\cite[\S~6.3]{engl1996regularization}. For all of four traditional iterative regularization methods, we use the Morozov's conventional discrepancy principle as the stopping rule.

We consider the following two different right-hand sides for the integral equation (\ref{IntegralEq}).
\begin{itemize}
\item \textbf{Example 1}: $y(s)=s(1-s)$. Then, $x^\dagger=2$, and $x^\dagger\in R((A^*A)^{p})$ for all $p<1/8$. This example uses the discretization size $n=400$. Other parameters are: $\Delta t=19.4946, \eta=2.5648\times 10^{-4}, x_0=1, \dot{x}_0=0, \tau=2, p=0.1125, \tau_{te}=1.1\times \delta^{4p/(4p+1)}$.
\item \textbf{Example 2}: $y(s)=s^4(1-s)^3$. Then, $x^\dagger=-6t^2(1-t)(2-8t+7t^2)$, and $x^\dagger\in R((A^*A)^{p})$ for all $p<5/8$. This example uses the discretization size $n=400$. Other parameters are: $\Delta t=19.4946, \eta=0.0051, x_0=0, \dot{x}_0=0, \tau=2, p=0.5625, \tau_{te}=1.1\times \delta^{4p/(4p+1)}$.
\end{itemize}

The results of the simulation are presented in Table \ref{tab:Comparison}, where we can conclude that, in general, the second order asymptotical regularization need less iteration and CPU time, and offers a more accurate regularization solution. Moreover, with respect to the Morozov's conventional discrepancy principle, the newly developed total energy discrepancy principle provides more accurate results in most cases. Concerning the number of iterations, the CGNE method performed much better than all of other methods. However, the accuracy of the CGNE method is much worse than other methods (including DP and TEDP), since the step size of CGNE is too large to capture the optimal point and the semi-convergence effect disturbs the iteration rather early.  Note that we set a maximal iteration number $k_{max}=400,000$ in all of our simulations.

\begin{table}[tbhp]
{\footnotesize
\caption{Comparisons with the Landweber method, the Nesterov's method, the Chebyshev method, and the CGNE method. }
\begin{center}
\begin{tabular}{|c|c|c|c|c|c|c|c|c|c|c|} \hline
\multirow{2}{*}{$\delta$} &
\multicolumn{3}{c|}{DP} &
\multicolumn{3}{c|}{TEDP} &
\multicolumn{3}{c|}{Landweber} \\
\cline{2-10}
& $k^*(\delta)$ & CPU (s) & \textrm{L2Err} & $k^*(\delta)$ & CPU (s) & \textrm{L2Err} & $k^*(\delta)$ & CPU (s) & \textrm{L2Err} \\ \hline
\multicolumn{10}{|c|}{\textbf{Example 1}}  \\  \hline
1.0400e-2 & 70 & 0.2023 & 0.1494 & 70 & 0.1995 & 0.1494 & 20438 & 52.6428 & 0.2639 \\
1.0380e-3 & 2872 & 8.9239 &  0.0945 & 2871 & 1.9105 & 0.0945  & $k_{max}$ & 1.6899e3& 0.1807 \\
1.0445e-4 & 56246 & 177.3387 & 0.0597 &  56246 & 177.9603 & 0.0473  & $k_{max}$ & 1.8048e3 & 0.1807 \\ \hline
\multicolumn{10}{|c|}{\textbf{Example 2}}  \\  \hline
1.0761e-2 & 15 & 0.0554 &  0.3676 & 25 & 0.0753 & 0.1987  & 1457 & 6.3672 & 0.7032 \\
1.0703e-3 & 93 & 0.2488 & 0.0637 & 132 & 0.3567 & 0.0581  & 23790 & 67.6082 & 0.1767 \\
1.1006e-4 & 453 & 1.4931 & 0.0195 & 531 & 1.8000 & 0.0184  & 187188 & 679.8274 & 0.0509 \\ \hline \hline
\multirow{2}{*}{$\delta$} &
\multicolumn{3}{c|}{Nesterov} &
\multicolumn{3}{c|}{Chebyshev} &
\multicolumn{3}{c|}{CGNE} \\
\cline{2-10}
& $k^*(\delta)$ & CPU (s) & \textrm{L2Err} & $k^*(\delta)$ & CPU (s) & \textrm{L2Err} & $k^*(\delta)$ & CPU (s) & \textrm{L2Err} \\ \hline
\multicolumn{10}{|c|}{\textbf{Example 1}}  \\  \hline
1.0400e-2 & 419 & 1.1384 & 0.2590 & 264 & 0.7378 & 0.2553  & 6 & 0.0351 & 0.2213 \\
1.0380e-3 & 2813 & 8.5986 &  0.1600 & 2229 & 7.6512 & 0.1496  & 18 & 0.0904& 0.1383  \\
1.0445e-4 & 16642 & 60.1179 & 0.1025 &  17443 & 52.2056 & 0.0897  & 39 & 0.2013& 0.0894 \\ \hline
\multicolumn{10}{|c|}{\textbf{Example 2}}  \\  \hline
1.0761e-2 & 102 & 0.3018 &  0.7043 & 62 & 0.1768 & 0.7102  & 6 & 0.0148 & 0.4835  \\
1.0703e-3 & 416 & 1.1732 & 0.1676 & 415 & 1.1908 & 0.1190  & 12 & 0.0261 & 0.1514 \\
1.1006e-4 & 1805 & 6.0793 & 0.0280 & 2226 & 7.6967 & 0.0196  & 15 & 0.0309 & 0.0447 \\ \hline
\end{tabular}\label{tab:Comparison}
\end{center}
}
\end{table}

\section{Conclusion and outlook}

In this paper, we have investigated the method of second order asymptotical regularization (SOAR) for solving the ill-posed linear inverse problem $Ax=y$ with the compact operator $A$ mapping between infinite dimensional Hilbert spaces. Instead of $y$, we are given noisy data $y^\delta$ obeying the inequality $\|y^\delta - y\|\leq \delta$. We have shown regularization properties for the dynamical solution of the second order equation (\ref{SecondFlow}). Moreover, by using  Morozov's conventional discrepancy principle on the one hand and a newly developed total energy discrepancy principle on the other hand, we have proven the order optimality of SOAR. Furthermore, based the framework of SOAR, by using the St\"{o}rmer-Verlet method, we have derived a novel iterative regularization algorithm. The convergence property of the proposed numerical algorithm is proven as well. Numerical experiments in Section 7 show that, in comparison with conventional iterative regularization methods, SOAR is a faster regularization method for solving linear inverse problems with high levels of accuracy.

Various numerical results show that the damping parameter $\eta$ in the second order equation (\ref{SecondFlow}) plays a prominent role in regularization and acceleration. Therefore, how to choose an optimal damping parameter should be studied in the future. Moreover, using the results of the nonlinear Landweber iteration, it will be possible to develop a theory of second order asymptotical regularization for wide classes of nonlinear ill-posed problems. Furthermore, it could be very interesting to investigate the case with the dynamical damping parameter $\eta=\eta(t)$. For instance, the second order equation (\ref{SecondFlow}) with $\eta=r/t$ ($r\geq3$) presents the continuous version of Nesterov's scheme~\cite{Su-2016}, and the discretization of (\ref{SecondFlow}) with
\begin{equation*}
\left\{\begin{array}{l}
\eta_k= \frac{(k+2\nu-1)(2k+4\nu-1)(2k+2\nu-3) - (k-1)(2k-3)(3k+3\nu-1)}{4(2k+2\nu-3)(2k+2\nu-1)(k+\nu-1)}, \\
\Delta t_k= 4 \frac{(2k+2\nu-1)(k+\nu-1)}{(k+2\nu-1)(2k+4\nu-1)},
\end{array}\right.
\end{equation*}
yields the well-known $\nu$-methods~\cite[\S~6.3]{engl1996regularization}. Even in the linear case (\ref{OperatorEq}), to the best of our knowledge, it is not quite clear whether Nesterov's approach equipped with a posteriori choice of the regularization parameter is an accelerated regularization method for solving ill-posed inverse problems. In our opinion, however, the second order asymptotical regularization can be a candidate for the systematic analysis of general second order regularization methods.

\section{Acknowledgement}

The work of Y. Zhang is supported by the Alexander von Humboldt foundation through a postdoctoral researcher fellowship, and the work of B. Hofmann is supported by the German Research Foundation (DFG-grant HO 1454/12-1).

\bibliographystyle{gAPA}
\bibliography{SecondOrderLinear}

\begin{thebibliography}{10}
\providecommand{\url}[1]{\normalfont{#1}}
\providecommand{\urlprefix}{Available from: }

\bibitem{Tikhonov-1998}
Tikhonov~A, Leonov~A, Yagola~A. Nonlinear ill-posed problems, volumes i and ii.
  London: Chapman and Hall; 1998.

\bibitem{engl1996regularization}
Engl~H, Hanke~M, Neubauer~A. Regularization of inverse problems. New York:
  Springer; 1996.

\bibitem{Kaltenbacher-2008}
Kaltenbacher~B, Neubauer~A, Scherzer~O. Iterative regularization methods for
  nonlinear ill-posed problems. Berlin: Walter de Gruyter GmbH \& Co. KG; 2008.

\bibitem{Tautenhahn-1994}
Tautenhahn~U. On the asymptotical regularization of nonlinear ill-posed
  problems. Inverse Problems. 1994;\hspace{0pt}10:1405--1418.

\bibitem{Vainikko1986}
Vainikko~G, Veretennikov~A. Iteration procedures in ill-posed problems. Moscow:
  Nauka (In Russian); 1986.

\bibitem{Rieder-2005}
Rieder~A. Runge-{K}utta integrators yield optimal regularization schemes.
  Inverse Problems. 2005;\hspace{0pt}21:453--471.

\bibitem{Neubauer-2000}
Neubauer~A. On {L}andweber iteration for nonlinear ill-posed problems in
  {H}ilbert scales. Numerische Mathematik. 2000;\hspace{0pt}85:309--328.

\bibitem{JinQ-2010}
Jin~Q. On a regularized {L}evenberg-{M}arquardt method for solving nonlinear
  inverse problems. Numerische Mathematik. 2010;\hspace{0pt}115:229--259.

\bibitem{JinQ-2009}
Jin~Q. On the discrepancy principle for some {N}ewton type methods for solving
  nonlinear inverse problems. Numerische Mathematik.
  2009;\hspace{0pt}111:509--558.

\bibitem{Neubauer-2017}
Neubauer~A. On {N}esterov acceleration for {L}andweber iteration of linear
  ill-posed problems. Journal of Inverse and Ill-Posed Problems.
  2017;\hspace{0pt}25:381--390.

\bibitem{Hubmer-2017}
Hubmer~S, Ramlau~R. Convergence analysis of a two-point gradient method for
  nonlinear ill-posed problems. Inverse Problems. 2017;\hspace{0pt}33:095004.

\bibitem{Alvarez-2000}
Alvarez~F. On the minimizing property of a second-order dissipative system in
  {H}ilbert spaces. SIAM J Control Optim. 2000;\hspace{0pt}38:1102--1119.

\bibitem{Alvarez-2002}
Alvarez~F, Attouch~H, Bolte~J, Redont~P. A second-order gradient-like
  dissipative dynamical system with hessian-driven damping. application to
  optimization and mechanics. J Math Pures Appl. 2002;\hspace{0pt}81:747--779.

\bibitem{Attouch-2000}
Attouch~H, Goudou~X, Redont~P. The heavy ball with friction method. i. the
  continuous dynamical system. Comm Contemp Math. 2000;\hspace{0pt}2:1--34.

\bibitem{ZhangYe2018}
Zhang~Y, Gong~R, Cheng~X, Gulliksson~M. A dynamical regularization algorithm
  for solving inverse source problems of elliptic partial differential
  equations. Inverse Problems. 2018;\hspace{0pt}34:065001.

\bibitem{Edvardsson-2015}
Edvardsson~S, Neuman~M, Edstr\"{o}m~P, Olin~H. Solving equations through
  particle dynamics. Comput Phys Commun. 2015;\hspace{0pt}197:169--181.

\bibitem{Edvardsson-2012}
Edvardsson~S, Gulliksson~M, Persson~J. The dynamical functional particle
  method: an approach for boundary value problems. J Appl Mech.
  2012;\hspace{0pt}79:021012.

\bibitem{Sandin-2016}
Sandin~P, \"{O}gren~M, Gulliksson~M. Numerical solution of the stationary
  multicomponent nonlinear schrodinger equation with a constraint on the
  angular momentum. Phys Rev E. 2016;\hspace{0pt}93:033301.

\bibitem{Hofmann-2007}
Hofmann~B, Math\'e~P. Analysis of profile functions for general linear
  regularization methods. SIAM J Numer Anal. 2007;\hspace{0pt}45:1122--1141.

\bibitem{Mathe-2003}
Math\'e~P, Pereverzev~SV. Geometry of linear ill-posed problems in variable
  {H}ilbert scales. Inverse Problems. 2003;\hspace{0pt}19(3):789--803.

\bibitem{Mathe-2006}
Math\'e~P. Saturation of regularization methods for linear ill-posed problems
  in {H}ilbert spaces. SIAM J Numer Anal. 2006;\hspace{0pt}42:968--973.

\bibitem{Bot-2016}
Bo\c{t}~R, Csetnek~E. Second order forward-backward dynamical systems for
  monotone inclusion problems. SIAM Journal on Control and Optimization.
  2016;\hspace{0pt}54:1423--1443.

\bibitem{Schock}
Schock~E. Approximate solution of ill-posed equations: arbitrarily slow
  convergence vs.~superconvergence. Constructive methods for the practical
  treatment of integral equations. 1985;\hspace{0pt}73:234--243.

\bibitem{Hofmann2009}
Hein~T, Hofmann~B. Approximate source conditions for nonlinear ill-posed
  problems -- chances and limitations. Inverse Problems.
  2009;\hspace{0pt}25:035003.

\bibitem{Hofmann2007}
Hofmann~B, Kaltenbacher~B, Poschl~C, Scherzer~O. A convergence rates result for
  {T}ikhonov regularization in {B}anach spaces with non-smooth operators.
  Inverse Problems. 2007;\hspace{0pt}23:987--1010.

\bibitem{Hochbruck-1998}
Hochbruck~M, Lubich~C, Selhofer~H. Exponential integrators for large systems of
  differential equations. SIAM J Sci Comput. 1998;\hspace{0pt}19:1152--1174.

\bibitem{Hairer-2006}
Hairer~E, Wanner~G, Lubich~C. Geometric numerical integration:
  Structure-preserving algorithms for ordinary differential equations (second
  edition). New York: Springer; 2006.

\bibitem{Tadmor}
Tadmor~E. A review of numerical methods for nonlinear partial differential
  equations. Bulletin of the American Mathematical Society.
  2012;\hspace{0pt}49(4):507--554.

\bibitem{Lions-1972}
Lions~J, Magenes~E. Non-homogeneous boundary value problems and applications,
  volumes i. Berlin: Springer; 1972.

\bibitem{Hanke1995}
Hanke~M. Conjugate gradient type methods for ill-posed problems. New York: John
  Wiley \& Sons, Inc.; 1995.

\bibitem{Nesterov-1983}
Nesterov~Y. A method of solving a convex programming problem with convergence
  rate. Soviet Mathematics Doklady. 1983;\hspace{0pt}27:372--376.

\bibitem{Su-2016}
Su~W, Boyd~S, Candes~E. A differential equation for modeling {N}esterov's
  accelerated gradient method: theory and insights. Journal of Machine Learning
  Research. 2016;\hspace{0pt}17:1--43.

\end{thebibliography}
\appendices

\section{Proofs in Section 3}

\subsection{Proof of Lemma~\ref{LemmaVanishing}}

Define the Lyapunov function of (\ref{SecondFlow}) by $\mathcal{E}(T) = \|A x(T)-y^\delta\|^2+ \|\dot{x}(T)\|^2$. It is not difficult to show that
\begin{equation}
\label{Decreasing}
\dot{\mathcal{E}}(t) = - 2 \eta \|\dot{x}(t)\|^2
\end{equation}
by looking at the equation (\ref{SecondFlow}) and the differentiation of the energy function
$\dot{\mathcal{E}}(t) = 2 \langle \dot{x}(t), \ddot{x}(t) - A^* ( y^\delta- A x(t) ) \rangle$. Hence, $\mathcal{E}(t)$ is non-increasing, and consequently, $\|\dot{x}(t)\|^2\leq \mathcal{E}(0)$. Therefore, $\dot{x}(\cdot)$ is uniform bounded.
Integrating both sides in (\ref{Decreasing}), we obtain
\begin{eqnarray*}
\int^{\infty}_{0} \|\dot{x}(t)\|^2 dt \leq  \mathcal{E}(0) / (2\eta) <  \infty,
\end{eqnarray*}
which yields $\dot{x}(\cdot) \in L^2([0,\infty),\mathcal{X})$.

Now, let us show that for any $x^*\in \mathcal{X}$ the following inequality holds.
\begin{eqnarray}\label{supLimit}
\mathop{\lim\sup}_{t\to \infty} \|A x(t)-y^\delta\| \leq \|A x^*-y^\delta\|.
\end{eqnarray}

Consider for every $t\in[0,\infty)$ the function $e(t)=e(t;x^*):=\frac{1}{2} \|x(t) - x^*\|$. Since $\dot{e}(t)= \langle x(t) - x^*, \dot{x}(t) \rangle $ and $\ddot{e}(t)= \|\dot{x}(t)\|^2 + \langle x(t) - x^*, \ddot{x}(t) \rangle$ for every $t\in[0,\infty)$. Taking into account (\ref{SecondFlow}), we get
\begin{equation}
\label{EqError2Sec3}
\ddot{e}(t) + \eta \dot{e}(t) + \langle x(t) - x^*, A^* ( A x(t) - y^\delta) \rangle = \|\dot{x}(t)\|^2.
\end{equation}

On the other hand, by the convexity inequality of the residual norm square functional $\|A x(t)-y^\delta\|^2$, we derive
\begin{eqnarray}\label{convexityIneqSec3}
\|A x(t)-y^\delta\|^2  + 2 \langle x^* - x(t), A^* ( A x(t) - y^\delta) \rangle \leq \|A x^*-y^\delta\|^2
\end{eqnarray}

Combine (\ref{EqError2Sec3}) and (\ref{convexityIneqSec3}) with the definition of $\mathcal{E}(t)$ to obtain
\begin{eqnarray*}
\ddot{e}(t) + \eta \dot{e}(t) \leq \frac{1}{2} \|A x^*-y^\delta\|^2 - \frac{1}{2} \mathcal{E}(t) + \frac{3}{2} \|\dot{x}(t)\|^2.
\end{eqnarray*}
By (\ref{Decreasing}), $\mathcal{E}(t)$ is nonincreasing, hence, given $t>0$, for all $\tau\in[0,t]$ we have
\begin{eqnarray*}
\label{ProofIneq}
\ddot{e}(\tau) + \eta \dot{e}(\tau) \leq \frac{1}{2} \|A x^*-y^\delta\|^2 - \frac{1}{2} \mathcal{E}(t) + \frac{3}{2} \|\dot{x}(\tau)\|^2.
\end{eqnarray*}
By multiplying this inequality with $e^{\eta t}$ and then integrating from 0 to $\theta$, we obtain
\begin{eqnarray*}
\dot{e}(\theta)\leq e^{-\eta \theta} \dot{e}(0) + \frac{1-e^{-\eta \theta}}{2\eta} (\|A x^*-y^\delta\|^2 - \mathcal{E}(t)) + \frac{3}{2} \int^\theta_0 e^{-\eta(\theta-\tau)} \|\dot{x}(\tau)\|^2 d \tau .
\end{eqnarray*}
Integrate the above inequality once more from 0 to $t$ together with the fact that $\mathcal{E}(t)$ decreases, to obtain
\begin{eqnarray}\label{ProofIneq2}
e(t)\leq e(0)+ \frac{1-e^{-\eta t}}{\eta} \dot{e}(0) + \frac{\eta t - 1 + e^{-\eta t}}{2\eta^2} (\|A x^*-y^\delta\|^2 - \mathcal{E}(t)) + h(t),
\end{eqnarray}
where $h(t):= \frac{3}{2} \int^t_0 \int^\theta_0 e^{-\eta(\theta-\tau)} \|\dot{x}(\tau)\|^2 d \tau d \theta$.

Since $e(t)\geq0$ and $\mathcal{E}(t)\geq \|A x(t)-y^\delta\|^2$, it follows from (\ref{ProofIneq2}) that
\begin{eqnarray*}
\frac{\eta t - 1 + e^{-\eta t}}{2\eta^2} \|A x(t)-y^\delta\|^2 \leq e(0)+ \frac{1-e^{-\eta t}}{\eta} \dot{e}(0) + \frac{\eta t - 1 + e^{-\eta t}}{2\eta^2} \|A x^*-y^\delta\|^2 + h(t).
\end{eqnarray*}
Dividing the above inequality by $\frac{\eta t - 1 + e^{-\eta t}}{2\eta^2}$ and letting $t\to\infty$, we deduce that
\begin{eqnarray*}
\mathop{\lim\sup}_{t\to \infty} \|A x(t)-y^\delta\|^2 \leq \|A x^*-y^\delta\|^2 + \mathop{\lim\sup}_{t\to \infty} \frac{2\eta}{t} h(t).
\end{eqnarray*}

Hence, for proving (\ref{supLimit}), it suffices to show that $h(\cdot)\in L^\infty([0,\infty),\mathcal{X})$. It is obviously held by noting the following inequalities
\begin{eqnarray*}
0 \leq h(t) = \frac{3}{2\eta} \int^t_0 (1- e^{-\eta(t-\tau)}) \|\dot{x}(\tau)\|^2 d \tau \leq \frac{3}{2\eta} \int^\infty_0 \|\dot{x}(\tau)\|^2 d \tau < \infty.
\end{eqnarray*}
From the inequality $\|A x(t)-y^\delta\|\geq \inf_{x^*\in \mathcal{X}} \|A x^*-y^\delta\|$, we conclude together with (\ref{supLimit}) that
\begin{equation}
\label{ProofLimit1}
\lim_{t\to \infty} \|A x(t)-y^\delta\|  = \inf_{x^*\in \mathcal{X}} \|A x^*-y^\delta\|.
\end{equation}
Consequently, we have
\begin{eqnarray*}
\lim_{t\to \infty} \|A x(t)-y^\delta\| \leq \|A x^\dagger-y^\delta\| \leq \delta.
\end{eqnarray*}

Now, let us show the remaining parts of the assertion. Since $\mathcal{E}(t)$ is nonincreasing and bounded from below by $\inf_{x^*\in \mathcal{X}} \|A x^*-y^\delta\|^2$, it converges as $t\to\infty$. If $\lim_{t\to\infty} \mathcal{E}(t) > \inf_{x^*\in \mathcal{X}} \|A x^*-y^\delta\|^2$, then $\lim_{t\to\infty} \|\dot{x}(t)\|>0$ by noting (\ref{ProofLimit1}). This contradicts the fact that $x(\cdot)\in L^2([0,\infty),\mathcal{X})$. Therefore, the limit (\ref{RateLimits}) holds and $\dot{x}(t)\to0$ as $t\to\infty$.

\subsection{Proof of Lemma~\ref{LemmaVanishingExact}}

(i) Consider for every $t\in[0,\infty)$ the function $e(t)=e(t;x^\dagger)=\frac{1}{2} \|x(t) - x^\dagger\|^2$. Similarly as in (\ref{EqError2Sec3}), it holds that
\begin{equation*}
\label{EqError2}
\ddot{e}(t) + \eta \dot{e}(t) + \langle x(t) - x^\dagger, A^* ( A x(t) - y) \rangle = \|\dot{x}(t)\|^2,
\end{equation*}
which implies that
\begin{eqnarray}
\label{ProofIneqe}
\ddot{e}(t) + \eta \dot{e}(t) + \frac{1}{\| A \|^2} \| A^* ( y- A x(t) ) \|^2 \leq \|\dot{x}(t)\|^2
\end{eqnarray}
or, equivalently (by using the evolution equation (\ref{SecondFlow})),
\begin{eqnarray}
\label{ProofIneq}
\ddot{e}(t) + \eta \dot{e}(t) + \frac{\eta}{\| A \|^2} \frac{d \|\dot{x}(t)\|^2}{dt} + \left( \frac{\eta^2}{\| A \|^2} - 1 \right) \|\dot{x}(t)\|^2 +  \frac{1}{\| A \|^2} \|\ddot{x}\|^2 \leq 0.
\end{eqnarray}

By the assumption $\eta\geq\|A\|$, we deduce that
\begin{equation}
\label{BoundedSolutionProof1}
\ddot{e}(t) + \eta \dot{e}(t) + \frac{\eta}{\| A \|^2} \frac{d \|\dot{x}(t)\|^2}{dt} \leq 0,
\end{equation}
which means that the function $t \mapsto \dot{e}(t) + \eta e(t)+ \frac{\eta}{\| A \|^2}  \|\dot{x}(t)\|^2$ is monotonically decreasing. Hence, a real number $C$ exists, such that
\begin{equation}
\label{ProofIneq5a}
\dot{e}(t) + \eta e(t)+ \frac{\eta}{\| A \|^2}  \|\dot{x}(t)\|^2 \leq C,
\end{equation}
which implies $\dot{e}(t) + \eta e(t) \leq C$. By multiplying this inequality with $e^{\eta t}$ and then integrating from 0 to $T$, we obtain the inequality
\begin{eqnarray*}
e(T)\leq e(0) e^{-\eta T} + C \left( 1 - e^{-\eta T} \right)/\eta \leq e(0) + C/\eta .
\end{eqnarray*}
Hence, $e(\cdot)$ is uniform bounded, and, consequently, the trajectory $x(\cdot)$ is uniform bounded.

(ii) follows from Lemma \ref{LemmaVanishing}.

Now, we prove assertion (iv). Define
\begin{eqnarray}\label{h}
h(t) = \frac{\eta}{2} \| x(t) - x^\dagger \|^2 + \langle \dot{x}(t), x(t) - x^\dagger \rangle.
\end{eqnarray}
By elementary calculations, we derive that
\begin{equation*}
\dot{h}(t) = \eta \langle \dot{x}(t), x(t) - x^\dagger \rangle + \langle \ddot{x}(t), x(t) - x^\dagger \rangle + \| \dot{x}(t) \|^2  = \| \dot{x}(t) \|^2 - \|Ax-y\|^2,
\end{equation*}
which implies that (by noting $\dot{\mathcal{E}}(t) = - 2 \eta \|\dot{x}(t) \|^2$)
\begin{eqnarray*}
\dot{\mathcal{E}}(t) + \eta \mathcal{E}(t)  + \eta \dot{h}(t) = 0.
\end{eqnarray*}

Integrate the above equation on $[0,T]$ to obtain together, with the nonnegativity of $\mathcal{E}(t)$,
\begin{eqnarray}\label{chiIneq}
\int^T_0 \mathcal{E}(t) dt = \frac{1}{\eta} \left( \mathcal{E}(0) - \mathcal{E}(t) \right) - (h(t)-h(0))
\leq \left(  \frac{1}{\eta} \mathcal{E}(0) + h(0) \right)- h(t).
\end{eqnarray}

On the other hand, since both $x(t)$ and $x^\dagger$ are uniform bounded, and $\dot{x}(t)\to0$ as $t\to0$, a constant $M$ exists such that $|h(t)|\leq M$. Hence, letting $T\to \infty$ in (\ref{chiIneq}), we obtain
\begin{eqnarray}\label{chiIneq2}
\int^\infty_0 \mathcal{E}(t)  dt < \infty.
\end{eqnarray}

Since $\mathcal{E}(t)$ is non-increasing, we deduce that
\begin{eqnarray}\label{chiIneq3}
\int^{T}_{T/2} \mathcal{E}(t) dt \geq \frac{T}{2}\mathcal{E}(T).
\end{eqnarray}
Using (\ref{chiIneq2}), the left side of (\ref{chiIneq3}) tends to 0 when $T\to\infty$, which implies that $\lim_{T\to\infty} T\mathcal{E}(T)=0$. Hence, we conclude $\lim_{T\to\infty} T \|A x(T) - y\|^2 =0$, which yields the desired result in (iv).

Finally, let us show the long-term behaviour of $\ddot{x}(\cdot)$. Integrating the inequality (\ref{ProofIneq}) from 0 to $T$ we obtain the fact that there exists a real number $C'$, such that for every $t\in[0,\infty)$
\begin{eqnarray}
\label{ProofIneqEddot}
\begin{array}{l}
\dot{e}(T) + \eta e(T) + \frac{\eta}{\| A \|^2} \|\dot{x}(T)\|^2 \\ \qquad\qquad
+ \left( \frac{\eta^2}{\| A \|^2} - 1 \right) \int^T_0 \|\dot{x}(t)\|^2 dt +  \frac{1}{\| A \|^2} \int^T_0 \|\ddot{x}(t)\|^2 dT \leq C'.
\end{array}
\end{eqnarray}
Since both $e(\cdot)$ and $\dot{e}(\cdot)$ are global bounded (note that $x(\cdot),\dot{x}(\cdot)\in L^\infty([0,\infty),\mathcal{X})$), inequality (\ref{ProofIneqEddot}) gives $\ddot{x}(\cdot)\in L^2([0,\infty),\mathcal{X})$. The relations $\ddot{x}(\cdot)\in L^\infty([0,\infty),\mathcal{X})$ and $\ddot{x}(t)\to0$ as $t\to\infty$ are obvious by noting assertions (i), (ii), (iv) and the connection equation (\ref{SecondFlow}).


\section{Convergence analysis of the second order asymptotical regularization for the case when $\eta\in (0, 2\|A\|]$}

\subsection{The underdamped case: $2\sigma_{j_0+1} < \eta< 2\sigma_{j_0}$}

In this case, the solution to the second order differential equation (\ref{SecondFlow}) reads
\begin{eqnarray*}\label{xDeltaReguCase2}
\begin{array}{ll}
x(t) = (1- A^* A g^{ab}(t, A^* A)) x_0 + \phi^{ab}(t, A^* A)\dot{x}_0 + g^{ab}(t, A^* A) A^* y^\delta,
\end{array}
\end{eqnarray*}
where
\begin{eqnarray*}\label{gtLambdaCase2}
& g^{ab}(t, \lambda) =
\left\{\begin{array}{ll}
g(t, \lambda), ~ \textrm{~if~}  \lambda < \eta^2/4, \\
g^{b}(t, \lambda), ~ \textrm{~if~}  \lambda > \eta^2/4,
\end{array}\right.~ \phi^{ab}(t, \lambda) =
\left\{\begin{array}{ll}
\phi(t, \lambda), ~ \textrm{~if~}  \lambda < \eta^2/4, \\
\phi^{b}(t, \lambda), ~ \textrm{~if~}  \lambda > \eta^2/4,
\end{array}\right.
\end{eqnarray*}
where $g(t, \lambda)$ and $\phi(t, \lambda)$ are defined in (\ref{gPhiDef}), and
\begin{eqnarray}\label{gPhiDef2}
\left\{\begin{array}{ll}
g^{b}(t, \lambda) = \frac{1}{\lambda} \left\{ 1- e^{-\frac{\eta}{2} t} \left[ \frac{\eta}{\sqrt{4 \lambda - \eta^2}} \sin \left( \frac{\sqrt{4 \lambda - \eta^2}}{2} t \right)+ \cos \left( \frac{\sqrt{4 \lambda - \eta^2}}{2} t \right) \right] \right\} , \\
\phi^{b}(t, \lambda)= \frac{2}{\sqrt{4 \lambda - \eta^2}} e^{-\frac{\eta}{2} t} \sin \left( \frac{\sqrt{4 \lambda - \eta^2}}{2} t \right).
\end{array}\right.
\end{eqnarray}

As in the overdamped case, we define
\begin{equation}\label{gAlphaFunCase2}
g^{ab}_\alpha(\lambda) = g^{ab}(1/\alpha, \lambda) \quad \textrm{~and~} \quad \phi^{ab}_\alpha(\lambda) = \phi^{ab}(1/\alpha, \lambda).
\end{equation}
In this case, the corresponding bias function becomes
\begin{eqnarray*}\label{rAlphaCase2}
& r^{ab}_\alpha(\lambda) =
\left\{\begin{array}{ll}
r_\alpha(\lambda), \qquad \textrm{~if~}  \lambda < \eta^2/4, \\
r^{b}_\alpha(\lambda) := e^{-\frac{\eta}{2\alpha}} \left[ \frac{\eta}{\sqrt{4 \lambda - \eta^2}} \sin \left( \frac{\sqrt{4 \lambda - \eta^2}}{2\alpha} \right)+ \cos \left( \frac{\sqrt{4 \lambda - \eta^2}}{2\alpha} \right) \right],~ \textrm{~if~}  \lambda > \eta^2/4,
\end{array}\right.
\end{eqnarray*}
where $r_\alpha(\lambda)$ is given in (\ref{rAlpha}).

\begin{theorem}\label{ThmReguCase2}
The functions $\{g^{ab}_\alpha(\lambda), \phi^{ab}_\alpha(\lambda)\}$, defined in (\ref{gAlphaFunCase2}), satisfy the conditions $(i) - (iii)$ of Proposition~\ref{gAlpha}.
\end{theorem}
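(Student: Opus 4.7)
The plan is to verify conditions (i)--(iii) of Proposition~\ref{gAlpha} separately on the two ``spectral regimes'' on which $g^{ab}_\alpha$ and $\phi^{ab}_\alpha$ are defined, and then combine the bounds. The crucial structural observation is that the underdamped hypothesis $2\sigma_{j_0+1}<\eta<2\sigma_{j_0}$ gives two \emph{uniform, positive} separations from the critical value $\eta^2/4$: for any spectral point $\lambda=\sigma_j^2$ with $j\ge j_0+1$ we have $\eta^2-4\lambda\ge\eta^2-4\sigma_{j_0+1}^2>0$, and for any spectral point $\lambda=\sigma_j^2$ with $j\le j_0$ we have $4\lambda-\eta^2\ge 4\sigma_{j_0}^2-\eta^2>0$. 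These two lower bounds will replace the role played by $\eta^2-4\|A\|^2$ in the overdamped analysis of Theorem~\ref{ThmReguCase1}.

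For condition~(i), on the subdomain $\lambda<\eta^2/4$ the pointwise limits $r_\alpha(\lambda)\to 0$ and $\phi_\alpha(\lambda)\to 0$ as $\alpha\to 0$ are already covered by Theorem~\ref{ThmReguCase1}. On $\lambda>\eta^2/4$ I would factor out $e^{-\eta/(2\alpha)}$ in both $r^b_\alpha(\lambda)$ and $\phi^b_\alpha(\lambda)$; since the remaining trigonometric expressions are bounded uniformly in $\alpha$ (for fixed $\lambda$), and $e^{-\eta/(2\alpha)}\to 0$ as $\alpha\to 0^+$, both pointwise limits are immediate.

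For conditions (ii) and (iii) I would handle each piece in turn. On $\lambda<\eta^2/4$ all the overdamped estimates of Theorem~\ref{ThmReguCase1} carry over verbatim with $\|A\|^2$ replaced by $\sigma_{j_0+1}^2$, yielding explicit $\gamma_1^a,\gamma_2^a$ and $\gamma_*^a=\sqrt{\eta/(\eta^2-4\sigma_{j_0+1}^2)}$. On $\lambda>\eta^2/4$, using $|\sin|,|\cos|\le 1$ and $e^{-\eta/(2\alpha)}\le 1$ I obtain
\[
|r^b_\alpha(\lambda)|\le \frac{\eta}{\sqrt{4\sigma_{j_0}^2-\eta^2}}+1,\qquad
|\phi^b_\alpha(\lambda)|\le \frac{2}{\sqrt{4\sigma_{j_0}^2-\eta^2}},
\]
which furnishes the constants for condition~(ii). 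For condition~(iii) on this piece, I write $\sqrt{\lambda}\,|g^b_\alpha(\lambda)|=|1-r^b_\alpha(\lambda)|/\sqrt{\lambda}$ and use $\sqrt{\lambda}\ge\sigma_{j_0}>\eta/2$ together with the just-established bound on $|r^b_\alpha|$ to conclude that $\sqrt{\lambda}\,|g^b_\alpha(\lambda)|$ is bounded by a constant $C^b$ uniformly in $\alpha\in(0,\bar\alpha]$. Taking $\gamma_*^{ab}=\max(\gamma_*^a,\,C^b\sqrt{\bar\alpha})$ (together with the obvious maxima for $\gamma_1,\gamma_2$) completes the verification.

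The step I expect to be the main technical irritant is condition~(iii) on the oscillatory piece: the overdamped trick based on the inequality $1-e^{-at}\le\sqrt{at}$ does not apply directly because $g^b_\alpha$ is not monotone in $t=1/\alpha$. I intend to sidestep this by exploiting the lower bound $\sqrt{\lambda}>\eta/2$ available on that piece (which is absent in the overdamped regime where $\lambda$ can be arbitrarily small), so that a plain constant bound for $\sqrt{\lambda}|g^b_\alpha(\lambda)|$ suffices and is absorbed into the required form $\gamma_*/\sqrt{\alpha}$ via the uniform upper bound $\bar\alpha$ on the regularization parameter. The only other subtlety is to ensure that no eigenvalue $\sigma_j^2$ lies at $\eta^2/4$, which is precisely what the strict inequalities in the underdamped assumption~(b) guarantee.
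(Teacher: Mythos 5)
Your proof is correct and follows essentially the same route as the paper: split the spectrum at $\eta^2/4$, reuse the overdamped estimates of Theorem~\ref{ThmReguCase1} on the piece $\lambda<\eta^2/4$ (with $\|A\|^2$ replaced by $\sigma_{j_0+1}^2$), and on the oscillatory piece obtain uniform bounds for (ii) and exploit $\sqrt{\lambda}>\eta/2$ together with the cap $\alpha\le\bar\alpha=\eta^2$ to absorb a constant bound into the form $\gamma_*/\sqrt{\alpha}$ for (iii).

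The one place you diverge is the bound on $r^b_\alpha$ and $\phi^b_\alpha$ in condition (ii): you use $|\sin|\le 1$ and therefore need the spectral gap $4\sigma_{j_0}^2-\eta^2>0$ to control the prefactor $\eta/\sqrt{4\lambda-\eta^2}$, whereas the paper uses $|\sin\xi|\le|\xi|$ to cancel that singular factor and then $e^{-\xi}(\xi+1)\le 1$, obtaining $|r^b_\alpha(\lambda)|\le 1$ and $|\phi^b_\alpha(\lambda)|\le \alpha^{-1}e^{-\eta/(2\alpha)}\le 2/(e\eta)$ uniformly for \emph{all} $\lambda>\eta^2/4$, not just $\lambda\ge\sigma_{j_0}^2$. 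Your version is adequate here because only the spectrum of $A^*A$ matters for the operator calculus (and the paper itself tacitly restricts to $\lambda\le\sigma_{j_0+1}^2$ on the other piece), but the paper's estimate is preferable for two reasons: the constants do not degenerate as $\eta$ approaches $2\sigma_{j_0}$, and the same $\lambda$-uniform inequality is reused verbatim in the critical-damping case and in Proposition~\ref{ThmQualificationCase2}, where the supremum runs over the whole interval $(\eta^2/4,\|A\|^2]$. Everything else, including your handling of (iii) on the oscillatory piece, matches the paper's argument.
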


\begin{proof}
The first requirement in Proposition \ref{gAlpha} is obvious. Furthermore, using the inequalities $|\sin \xi| \leq |\xi|$ and $e^{-\xi} \left( \xi + 1 \right) \leq 1$ we obtain
\begin{eqnarray}\label{Case2ProofIneq1}
\left| e^{-\frac{\eta}{2\alpha}} \left[ \frac{\eta}{\sqrt{4 \lambda - \eta^2}} \sin \left( \frac{\sqrt{4 \lambda - \eta^2}}{2\alpha} \right)+ \cos \left( \frac{\sqrt{4 \lambda - \eta^2}}{2\alpha} \right) \right] \right| \leq 1,
\end{eqnarray}
which implies the second condition in Proposition \ref{gAlpha}: $|r^{ab}_{\alpha}(\lambda)|\leq \gamma^{ab}_1$ with $\gamma^{ab}_1:=\max\left\{ \frac{\eta}{2\sqrt{\eta^2 - 4 \sigma^2_{j_0+1}}} + \frac{1}{2}, 1 \right\}$. Similarly, we have
\begin{eqnarray}\label{Case2ProofIneq1}
\left| \phi^{b}_\alpha (\lambda) \right|= \left| \frac{2}{\sqrt{4 \lambda - \eta^2}} e^{-\frac{\eta}{2\alpha}} \sin \left( \frac{\sqrt{4 \lambda - \eta^2}}{2\alpha} \right) \right| \leq \alpha^{-1} e^{-\frac{\eta}{2\alpha}} \leq \frac{2}{e\eta},
\end{eqnarray}
which means that $|\phi^{ab}_{\alpha}(\lambda)|\leq \gamma^{ab}_2$ with
$\gamma^{ab}_2:=\max\left\{ \frac{\eta}{2\sqrt{\eta^2 - 4 \sigma^2_{j_0+1}}}, \frac{2}{e\eta} \right\}$.

Now, let us check the third condition in Proposition \ref{gAlpha}. Using the inequality (\ref{Case2ProofIneq1}) we obtain that for $\lambda > \eta^2/4$
\begin{eqnarray*}
\begin{array}{ll}
\frac{1}{\sqrt{\lambda}} \left\{ 1- e^{-\frac{\eta}{2\alpha}} \left[ \frac{\eta}{\sqrt{4 \lambda - \eta^2}} \sin \left( \frac{\sqrt{4 \lambda - \eta^2}}{2\alpha} \right)+ \cos \left( \frac{\sqrt{4 \lambda - \eta^2}}{2\alpha} \right) \right] \right\} \leq \frac{2}{\sqrt{\lambda}} \leq \frac{4}{\eta}.
\end{array}
\end{eqnarray*}
Hence, in the case when $\lambda > \eta^2/4$ and $\alpha\leq \eta^2$, we have $\sqrt{\lambda} |g^b_{\alpha}(\lambda)| \leq  4/ \eta  \leq 4 / \sqrt{\alpha}$.

Finally, by defining
\begin{eqnarray}\label{gamma_ab}
\gamma^{ab}_* = \max \left\{ \sqrt{\eta/(\eta^2 - 4 \sigma^2_{j_0+1})} , 4 \right\}
\end{eqnarray}
we can deduce that $\sqrt{\lambda} |g^{ab}_{\alpha}(\lambda)|\leq \gamma^{ab}_*/ \sqrt{\alpha}$ for $\alpha \in (0,\bar{\alpha}]$ with $\bar{\alpha}=\eta^2$.

\end{proof}

\begin{proposition}\label{ThmQualificationCase2}
For all exponents $p>0$ the monomials $\psi(\lambda) = \lambda^{p}$ are qualifications with the constants $\gamma_{ab} = \max \{\gamma, \gamma_b\}$ for the second order asymptotical regularization method in the underdamped case, where $\gamma$ is defined in (\ref{gamma_a}) and
\begin{eqnarray}\label{gamma_b}
\gamma_b:= \frac{\eta+2\|A\|^2}{2} \left( \frac{2(p+1)}{e\eta} \right)^{p+1} \|A\|^{2p}.
\end{eqnarray}
\end{proposition}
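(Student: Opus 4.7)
The plan is to reduce the piecewise problem to the two natural ranges of $\lambda$, handle each by its own estimate, and then combine the resulting constants. Since $r^{ab}_\alpha$ and $\phi^{ab}_\alpha$ coincide with the overdamped $r_\alpha$ and $\phi_\alpha$ on $(0,\eta^2/4)$, the contribution of this sub-interval to $\sup_{\lambda\in(0,\|A\|^2]}|r^{ab}_\alpha(\lambda)|\lambda^p$ is already controlled (with the identical argument) by Proposition~\ref{ThmQualificationCase1}, giving the bound $\gamma\alpha^p$. Hence it suffices to produce a constant $\gamma_b$ for which $|r^b_\alpha(\lambda)|\lambda^p \le \gamma_b\alpha^p$ and $|\phi^b_\alpha(\lambda)|\lambda^p \le \gamma_b\alpha^p$ uniformly on $(\eta^2/4,\|A\|^2]$; then $\gamma_{ab}=\max\{\gamma,\gamma_b\}$ is a qualification constant.

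The key algebraic device is the elementary inequality $|\sin\xi|\leq\xi$, which cancels the apparent singularity of $\eta/\sqrt{4\lambda-\eta^2}$ as $\lambda \to (\eta^2/4)^+$. Applied to $r^b_\alpha$ it yields
\[
\frac{\eta}{\sqrt{4\lambda-\eta^2}}\sin\!\left(\frac{\sqrt{4\lambda-\eta^2}}{2\alpha}\right) \;\leq\; \frac{\eta}{2\alpha},
\]
so that, combined with $|\cos|\leq 1$, one gets $|r^b_\alpha(\lambda)| \leq e^{-\eta/(2\alpha)}\bigl(\tfrac{\eta}{2\alpha}+1\bigr)$ uniformly in $\lambda$. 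Using $\alpha\leq\|A\|^2$ (the relevant range in Definition~\ref{QualificationDef}) to rewrite $\tfrac{\eta}{2\alpha}+1 = \tfrac{\eta+2\alpha}{2\alpha}\leq\tfrac{\eta+2\|A\|^2}{2\alpha}$, and bounding $\lambda^p\leq \|A\|^{2p}$, the estimate becomes
\[
|r^b_\alpha(\lambda)|\,\lambda^p \;\leq\; \frac{\eta+2\|A\|^2}{2}\,\|A\|^{2p}\,\frac{e^{-\eta/(2\alpha)}}{\alpha}.
\]

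To trade the exponential-over-$\alpha$ factor for a genuine power $\alpha^p$, I would substitute $\xi=\eta/(2\alpha)$ and invoke the standard one-variable maximization
\[
\sup_{\xi>0}\;\xi^{p+1}e^{-\xi} \;=\; \left(\frac{p+1}{e}\right)^{p+1},
\]
which is exactly the right-hand side when one rearranges $e^{-\eta/(2\alpha)}/\alpha \leq (2(p+1)/(e\eta))^{p+1}\alpha^p$. This matches the stated $\gamma_b$ precisely. For $\phi^b_\alpha$, the single application of $|\sin\xi|\leq\xi$ gives $|\phi^b_\alpha(\lambda)| \leq e^{-\eta/(2\alpha)}/\alpha$ directly, without the $(\eta+2\|A\|^2)/2$ prefactor, so the same substitution yields a strictly smaller bound that is absorbed into $\gamma_b\alpha^p$ (conservatively; absorbing the two estimates into one constant is what justifies the $\max$ in $\gamma_{ab}$).

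I expect the only real difficulty to be the singularity of $\eta/\sqrt{4\lambda-\eta^2}$ near $\lambda=\eta^2/4$: bounding $|\sin|\leq 1$ would be fatal there, so the inequality $|\sin\xi|\leq\xi$ must be used precisely to cancel the singular factor. Once that maneuver is in place, the rest reduces to the classical power-times-exponential supremum already used in the overdamped proof, together with careful bookkeeping to show that both bounds (for $r^b_\alpha$ and $\phi^b_\alpha$) fit under the common constant $\gamma_b$.
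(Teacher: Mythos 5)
Your proof is correct and takes essentially the same route as the paper's: reduce to the band $\lambda>\eta^2/4$ via Proposition~\ref{ThmQualificationCase1}, use $|\sin\xi|\leq\xi$ to cancel the singular factor $(4\lambda-\eta^2)^{-1/2}$ and obtain $|r^b_\alpha(\lambda)|\leq e^{-\eta/(2\alpha)}\left(\frac{\eta}{2\alpha}+1\right)$, then bound $\lambda^{p}\leq\|A\|^{2p}$, use $\alpha\leq\|A\|^{2}$, and apply the power-times-exponential supremum to convert $e^{-\eta/(2\alpha)}/\alpha$ into a multiple of $\alpha^{p}$ --- exactly the paper's chain of estimates. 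The only quibble is that absorbing your $\phi^b_\alpha$ bound into $\gamma_b$ tacitly requires $(\eta+2\|A\|^2)/2\geq 1$, a looseness in the constants that the paper's own proof shares.
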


\begin{proof}
By Proposition \ref{ThmQualificationCase1}, we only need to show that
\begin{eqnarray}\label{QualificationFunProofCase2}
\sup_{\lambda\in(\eta^2/4,\|A\|^2]} |r^b_{\alpha}(\lambda)| \lambda^{p} \leq \gamma_b \alpha^{p} \quad \textrm{~and~} \sup_{\lambda\in(\eta^2/4,\|A\|^2]} |\phi^b_{\alpha}(\lambda)| \lambda^{p} \leq \gamma_b \alpha^{p}
\end{eqnarray}
for all $\alpha\in(0, \|A\|^2]$. Set $\xi=\eta/2$ and $p=p'+1$ in (\ref{Pflimiting1}) to obtain
\begin{eqnarray}\label{Case2ProofIneq2}
e^{-\frac{\eta}{2\alpha}} \leq  \left( 2(p'+1) / (e\eta) \right)^{p'+1} \alpha^{p'+1}.
\end{eqnarray}
Then, using (\ref{Case2ProofIneq1}) and (\ref{Case2ProofIneq2}), we can derive that for all $\alpha\in(0, \|A\|^2]$
\begin{eqnarray*}\label{QualificationFunProofCase2a}
\begin{array}{ll}
\left| e^{-\frac{\eta}{2\alpha}} \left[ \frac{\eta}{\sqrt{4 \lambda - \eta^2}} \sin \left( \frac{\sqrt{4 \lambda - \eta^2}}{2\alpha} \right)+ \cos \left( \frac{\sqrt{4 \lambda - \eta^2}}{2\alpha} \right) \right] \right| \cdot \lambda^{p'}
\leq e^{-\frac{\eta}{2\alpha}} \left( \frac{\eta}{2\alpha} + 1 \right) \lambda^{p'} \\
\leq e^{-\frac{\eta}{2\alpha}} \cdot \frac{\eta+2\|A\|^2}{2\alpha} \cdot \|A\|^{2p'}
\leq \left( \frac{2(p'+1)}{e\eta} \right)^{p'+1} \cdot \alpha^{p'+1} \cdot \frac{\eta+2\|A\|^2}{2\alpha} \cdot \|A\|^{2p'} = \gamma_b \alpha^{p'},
\end{array}
\end{eqnarray*}
which yields the first inequality in (\ref{QualificationFunProofCase2}).

Finally, from the above result, we can deduce that for all $\alpha\in(0, \|A\|^2]$
\begin{eqnarray*}\label{QualificationFunProofCase2b}
\begin{array}{ll}
|\phi^b_{\alpha}(\lambda)| \lambda^{p'} \leq e^{-\frac{\eta}{2\alpha}} \frac{\eta}{2\alpha} \lambda^{p'} \leq e^{-\frac{\eta}{2\alpha}} \left( \frac{\eta}{2\alpha} + 1 \right) \lambda^{p'} \leq  \gamma_b \alpha^{p'},
\end{array}
\end{eqnarray*}
which completes the proof.
\end{proof}

\subsection{The critical damping case: $\eta= 2\sigma_{j_0}$}

In this case, the solution of (\ref{SecondFlow}) is $x(t) = (1- A^* A g^{abc}(t, A^* A)) x_0 + \phi^{abc}(t, A^* A)\dot{x}_0 + g^{abc}(t, A^* A) A^* y^\delta$, where
\begin{eqnarray*}\label{gtLambdaCase2}
& g^{abc}(t, \lambda) =
\left\{\begin{array}{ll}
g(t, \lambda), ~ \textrm{~if~}  \lambda > \eta^2/4, \\
g^{b}(t, \lambda), ~ \textrm{~if~}  \lambda < \eta^2/4,  \\
g^{c}(t, \lambda), ~  \lambda = \eta^2/4,
\end{array}\right. ~ \phi^{abc}(t, \lambda) =
\left\{\begin{array}{ll}
\phi(t, \lambda), ~ \textrm{~if~}  \lambda > \eta^2/4, \\
\phi^{b}(t, \lambda), ~ \textrm{~if~}  \lambda < \eta^2/4,  \\
\phi^{c}(t, \lambda), ~  \lambda = \eta^2/4,
\end{array}\right.
\end{eqnarray*}
and $g^{c}(t, \lambda):= \frac{4}{\eta^2} \left\{ 1- e^{-\frac{\eta}{2} t} \left( \frac{\eta}{2} t +1 \right) \right\}$, $\phi^{c}(t, \lambda):=  t e^{-\frac{\eta}{2} t}$.

Define
\begin{equation}\label{gAlphaFunCase3}
g^{abc}_\alpha(\lambda) = g^{abc}(1/\alpha, \lambda) \quad \textrm{~and~} \quad \phi^{abc}_\alpha(\lambda) = \phi^{abc}(1/\alpha, \lambda),
\end{equation}
and obtain the corresponding bias function
\begin{eqnarray*}\label{rAlphaCase3}
& r^{abc}_\alpha(\lambda) =
\left\{\begin{array}{ll}
r^{a}_\alpha(\lambda), \qquad  \textrm{~if~}  \lambda > \eta^2/4, \\
r^{b}_\alpha(\lambda), \qquad \textrm{~if~}  \lambda < \eta^2/4,  \\
r^{c}_\alpha(\lambda) := e^{-\frac{\eta}{2\alpha}} \left( \frac{\eta}{2\alpha} +1 \right), \quad \textrm{~if~}  \lambda = \eta^2/4.
\end{array}\right.
\end{eqnarray*}

\begin{theorem}\label{ThmReguCase3}
The functions $\{ g^{abc}_\alpha(\lambda) , \phi^{abc}_\alpha(\lambda) \}$, given in (\ref{gAlphaFunCase3}), satisfy the conditions $(i) - (iii)$ of Proposition~\ref{gAlpha}.
\end{theorem}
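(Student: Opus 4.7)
The plan is to reuse Theorems~\ref{ThmReguCase1} and~\ref{ThmReguCase2} as black boxes on the two open regimes $\lambda<\eta^2/4$ and $\lambda>\eta^2/4$, where $g^{abc}_\alpha,\phi^{abc}_\alpha$ coincide with the overdamped pair $(g_\alpha,\phi_\alpha)$ and the underdamped pair $(g^b_\alpha,\phi^b_\alpha)$ respectively, and then to verify conditions (i)--(iii) of Proposition~\ref{gAlpha} directly at the single exceptional value $\lambda=\eta^2/4=\sigma_{j_0}^2$, which is governed by
\begin{eqnarray*}
r^c_\alpha(\lambda)=\bigl(1+\tfrac{\eta}{2\alpha}\bigr)e^{-\eta/(2\alpha)},\quad \phi^c_\alpha(\lambda)=\tfrac{1}{\alpha}e^{-\eta/(2\alpha)},\quad g^c_\alpha(\lambda)=\tfrac{4}{\eta^2}\bigl\{1-(1+\tfrac{\eta}{2\alpha})e^{-\eta/(2\alpha)}\bigr\}.
\end{eqnarray*}
The final constants $\gamma^{abc}_1,\gamma^{abc}_2,\gamma^{abc}_*$ will be the maxima of the constants produced in the three sub-regimes, with the same admissible upper bound $\bar\alpha=\eta^2$ as in the underdamped case.

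With the change of variable $\xi=\eta/(2\alpha)\ge 0$, each verification at $\lambda=\eta^2/4$ reduces to a scalar inequality. Condition~(i) follows immediately from $(1+\xi)e^{-\xi}\to 0$ and $\xi e^{-\xi}\to 0$ as $\xi\to\infty$. For Condition~(ii), I would invoke the elementary bounds $(1+\xi)e^{-\xi}\le 1$ and $\xi e^{-\xi}\le 1/e$ to obtain $|r^c_\alpha(\lambda)|\le 1$ and $|\phi^c_\alpha(\lambda)|=(2/\eta)\xi e^{-\xi}\le 2/(e\eta)$, both of which are already majorised by $\gamma^{ab}_1$ and $\gamma^{ab}_2$ from Theorem~\ref{ThmReguCase2}.

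For Condition~(iii), I would write $1-(1+\xi)e^{-\xi}=(1-e^{-\xi})-\xi e^{-\xi}\le 1-e^{-\xi}$ and then apply the inequality $1-e^{-\xi}\le\sqrt{\xi}$, the same tool used in the proof of Theorem~\ref{ThmReguCase1}. Since $\sqrt{\lambda}=\eta/2$ at the critical point, this yields
\begin{eqnarray*}
\sqrt{\lambda}\,|g^c_\alpha(\lambda)|=\tfrac{2}{\eta}\bigl(1-(1+\xi)e^{-\xi}\bigr)\le\tfrac{2}{\eta}\sqrt{\xi}=\sqrt{2/\eta}\,\big/\!\sqrt{\alpha},
\end{eqnarray*}
so $\gamma^{abc}_*=\max\{\gamma^{ab}_*,\sqrt{2/\eta}\}$ delivers Condition~(iii).

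The principal obstacle is purely bookkeeping. Because the critical damping regime fixes $\eta=2\sigma_{j_0}$, one must verify that the uniform denominators such as $\sqrt{\eta^2-4\sigma_{j_0+1}^2}$ inherited from the underdamped/overdamped analyses remain strictly positive and finite when restricted to the sub-regimes $\lambda\le\sigma^2_{j_0+1}<\eta^2/4$ and $\lambda\ge\sigma^2_{j_0-1}>\eta^2/4$; this is automatic since the spectrum of $A^*A$ is discrete and the two sub-regimes avoid the critical value by a positive gap. Assembling the three sets of constants into their componentwise maxima then produces uniform bounds valid for all $\alpha\in(0,\bar\alpha]$ and all $\lambda\in(0,\|A\|^2]$, completing the verification.
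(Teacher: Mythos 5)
Your proposal is correct and follows essentially the same route as the paper: reduce to the single critical value $\lambda=\eta^2/4$ by invoking the earlier theorems on the other spectral values, then verify (i)--(iii) there by elementary scalar estimates in $\xi=\eta/(2\alpha)$, taking componentwise maxima of the constants with $\bar\alpha=\eta^2$. The only (harmless) deviation is in condition (iii), where you use $1-(1+\xi)e^{-\xi}\le 1-e^{-\xi}\le\sqrt{\xi}$ to get the constant $\sqrt{2/\eta}$ directly, whereas the paper bounds the bracket crudely by $2$ and converts $4/\eta$ into $4/\sqrt{\alpha}$ via $\alpha\le\eta^2$; both yield a valid $\gamma_*$.
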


\begin{proof}
By Theorem \ref{ThmReguCase2}, we only need to check the case when $\lambda = \eta^2/4$. In this case, it is easy to verify that $\lim_{\alpha\to0} \phi_{\alpha}(\lambda) =\lim_{\alpha\to0} e^{-\frac{\eta}{2\alpha}} / \alpha =0$, $\lim_{\alpha\to0} r_{\alpha}(\lambda) =\lim_{\alpha\to0} e^{-\frac{\eta}{2\alpha}} \left( \frac{\eta}{2\alpha} +1 \right) =0$ and $|\phi_{\alpha}(\lambda)|\leq \frac{2}{e\eta}$ and $|r_{\alpha}(\lambda)|\leq 1$ for all $\alpha> 0$. Finally, by the inequality (assume that $\alpha\leq \eta^2$) $\frac{1}{\sqrt{\lambda}} \left\{ 1- e^{-\frac{\eta}{2\alpha}} \left( \frac{\eta}{2\alpha} +1 \right) \right\} \leq \frac{2}{\sqrt{\lambda}} = \frac{4}{\eta} \leq  \frac{4}{\sqrt{\alpha}}$, and Theorem \ref{ThmReguCase2}, we complete the proof with $\sqrt{\lambda} |g_{\alpha}(\lambda)|\leq \gamma^{ab}_*/ \sqrt{\alpha}$ for $\alpha \in (0,\bar{\alpha}]$, $\bar{\alpha}=\eta^2$, and $\gamma^{ab}_*$ is defined in (\ref{gamma_ab}).
\end{proof}

\begin{proposition}\label{ThmQualificationCase3}
For all exponents $p>0$ the monomials $\psi(\lambda) = \lambda^{p}$ are qualifications with the constants $\gamma_{abc} = \max \left\{ \gamma, \gamma_b, \gamma_c \right\}$ for the second order asymptotical regularization method in the critical damping case, where
\begin{eqnarray}\label{gamma_c}
\gamma_c:= \frac{\eta+2\|A\|^2}{2} \left( \frac{p+1}{e} \right)^{p+1} \left( \frac{\eta}{2} \right)^{p-2} \max\left(\frac{\eta}{2} , 1 \right),
\end{eqnarray}
and the constants $\gamma$ and $\gamma_b$ are defined in (\ref{gamma_a}) and (\ref{gamma_b}) respectively.
\end{proposition}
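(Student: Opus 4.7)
The plan is to split the supremum defining the qualification inequality over $\lambda\in(0,\|A\|^2]$ according to the three-part definition of $g^{abc}_\alpha$ and $\phi^{abc}_\alpha$: the sub-interval on which the overdamped formulas $(r_\alpha,\phi_\alpha)$ apply, the sub-interval on which the underdamped formulas $(r^b_\alpha,\phi^b_\alpha)$ apply, and the single boundary point $\lambda=\eta^2/4$ on which the critical formulas $(r^c_\alpha,\phi^c_\alpha)$ apply. The pointwise bounds over the first two sub-intervals are already handled by the proofs of Propositions \ref{ThmQualificationCase1} and \ref{ThmQualificationCase2}, yielding the bounds $\gamma\,\alpha^p$ and $\gamma_b\,\alpha^p$ respectively. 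Consequently, only the pointwise bound at $\lambda=\eta^2/4$ remains new, and the final constant $\gamma_{abc}$ is obtained as the maximum of the three.

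At $\lambda=\eta^2/4$ one has $r^c_\alpha(\lambda)=e^{-\eta/(2\alpha)}(\eta/(2\alpha)+1)$, $\phi^c_\alpha(\lambda)=\alpha^{-1}e^{-\eta/(2\alpha)}$, and $\lambda^p=(\eta/2)^{2p}$. The key step is to substitute $\xi=\eta/2$ in the identity (\ref{Pflimiting1}), which yields the estimate $e^{-\eta/(2\alpha)}\le(2(p+1)/(e\eta))^{p+1}\alpha^{p+1}$ already exploited as (\ref{Case2ProofIneq2}). Using $\alpha\le\|A\|^2$ to bound $\eta/(2\alpha)+1\le(\eta+2\|A\|^2)/(2\alpha)$, and then applying this exponential inequality to absorb the resulting $1/\alpha$ factor, gives
\begin{equation*}
|r^c_\alpha(\eta^2/4)|\,(\eta^2/4)^p\le \frac{\eta+2\|A\|^2}{2}\left(\frac{p+1}{e}\right)^{p+1}\!\!(\eta/2)^{p-1}\,\alpha^p,
\end{equation*}
together with the analogous but smaller bound for $|\phi^c_\alpha(\eta^2/4)|\,(\eta^2/4)^p$, which lacks the weight $(\eta+2\|A\|^2)/2$. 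Rewriting $(\eta/2)^{p-1}=(\eta/2)^{p-2}\cdot(\eta/2)$ and enlarging $\eta/2$ to $\max(\eta/2,1)$ produces precisely the constant $\gamma_c$ stated in (\ref{gamma_c}).

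The main obstacle is not analytical but bookkeeping. One must verify that the same $\gamma_c$ dominates both the $r^c$ and $\phi^c$ contributions, which reduces to the elementary inequality $(\eta+2\|A\|^2)\max(\eta/2,1)\ge\eta$, and one must also confirm that the factor $\max(\eta/2,1)$ correctly handles both regimes $\eta\ge 2$ and $\eta<2$ by absorbing the missing $\eta/2$ in the exponent change from $p-2$ to $p-1$. Once this is verified, combining the three regional bounds yields the qualification inequalities of Definition~\ref{QualificationDef} uniformly for all $\alpha\in(0,\|A\|^2]$ with the constant $\gamma_{abc}=\max\{\gamma,\gamma_b,\gamma_c\}$.
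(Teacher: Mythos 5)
Your proposal is correct and follows essentially the same route as the paper: reduce to the single point $\lambda=\eta^2/4$ via Propositions~\ref{ThmQualificationCase1} and~\ref{ThmQualificationCase2}, bound $\eta/(2\alpha)+1\le(\eta+2\|A\|^2)/(2\alpha)$ for $\alpha\le\|A\|^2$, and absorb the resulting $1/\alpha$ using the exponential estimate (\ref{Case2ProofIneq2}) obtained from (\ref{Pflimiting1}) with $\xi=\eta/2$. The only (harmless) deviation is in the $\phi^c$ term, where you cancel $\alpha^{-1}$ directly against $\alpha^{p+1}$ to get a bound with exponent $(\eta/2)^{p-1}$ and no $(\eta+2\|A\|^2)/2$ weight, while the paper reuses the $r^c$ machinery to land on $(\eta/2)^{p-2}$ with that weight; your closing check $(\eta+2\|A\|^2)\max(\eta/2,1)\ge\eta$ correctly reconciles this with the stated $\gamma_c$.
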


\begin{proof}
By Propositions \ref{ThmQualificationCase1} and \ref{ThmQualificationCase2}, we only need to show that for all $\alpha\in(0, \|A\|^2]$
\begin{eqnarray*}
|r_{\alpha}(\eta^2/4)| (\eta^2/4)^p \leq \gamma_c \alpha^{p} \textrm{~and~} |\phi_{\alpha}(\eta^2/4)| (\eta^2/4)^p \leq \gamma_c \alpha^{p}.
\end{eqnarray*}

By (\ref{QualificationFunProofCase2a}) and elementary calculations, we derive that
\begin{eqnarray*}
\begin{array}{ll}
\left|r_{\alpha} \left( \frac{\eta^2}{4} \right) \right| \left( \frac{\eta^{2}}{4} \right)^p = e^{-\frac{\eta}{2\alpha}} \left( \frac{\eta}{2\alpha} +1 \right) \left( \frac{\eta^{2}}{4} \right)^p
\leq \left( \left( \frac{2(p+1)}{e\eta} \right)^{p+1} \alpha^{p+1} \right) \frac{\eta+2\|A\|^2}{2\alpha}   \left( \frac{\eta^{2}}{4} \right)^p \\ \quad
= \left\{ \frac{\eta+2\|A\|^2}{2} \left( \frac{2(p+1)}{e\eta} \right)^{p+1} \left( \frac{\eta^{2}}{4} \right)^p \right\} \alpha^{p} = \left\{ \frac{\eta+2\|A\|^2}{2} \left( \frac{p+1}{e} \right)^{p+1} \left( \frac{\eta}{2} \right)^{p-1} \right\} \alpha^{p}  \leq \gamma_c \alpha^{p},
\end{array}
\end{eqnarray*}
and
\begin{eqnarray*}
\begin{array}{ll}
\left|\phi_{\alpha} \left( \frac{\eta^2}{4} \right) \right| \left( \frac{\eta^{2}}{4} \right)^p = e^{-\frac{\eta}{2\alpha}} \frac{\eta}{2\alpha} \frac{2}{\eta} \left( \frac{\eta^{2}}{4} \right)^p
\leq e^{-\frac{\eta}{2\alpha}} \left( \frac{\eta}{2\alpha} +1 \right) \left( \frac{\eta}{2} \right)^{2p-1}
 \\ \qquad
\leq \left( \left( \frac{p+1}{e} \right)^{p+1} \alpha^{p+1} \right) \frac{\eta+2\|A\|^2}{2\alpha}  \left( \frac{\eta}{2} \right)^{2p-1}  \leq \gamma_c \alpha^{p},
\end{array}
\end{eqnarray*}
which yields the required result.
\end{proof}

\end{document}